\tikzstyle{vertex}=[circle, draw, fill, inner sep=0pt, minimum size=5pt]
\tikzstyle{circ}=[circle, draw, inner sep=0pt, minimum size=15pt]
\tikzstyle{leg}=[inner sep=0pt, minimum size=0pt]
\tikzstyle{loop}=[looseness=15]
\tikzstyle{loopcirc}=[looseness=5]
\def \AA {{\mathsf{A}}^\star}
\def \NL {{\mathsf{NL}}^\star}
\def \MM {{\overline{\mathsf{M}}}}
\def \proj {{\mathbb{P}}}
\def \com {{\mathbb{C}}}
\def \Z {{\mathbb{Z}}}
\def \w {\widehat}
\def \ww {\widetilde}
\newcommand{\comment}[1]{}
\newtheorem{theorem}{Theorem}
\newtheorem{conjecture}{Conjecture}
\newtheorem{corollary}[theorem]{Corollary}
\newtheorem{proposition}{Proposition}
\theoremstyle{definition}
\theoremstyle{definition}
\begin{document}
\baselineskip=16pt

\title[Relations in the tautological ring]
{Relations in the tautological ring of the
moduli space of $K3$ surfaces}

\author{Rahul Pandharipande}
\address{Department of Mathematics, ETH Z\"urich}
\email {rahul@math.ethz.ch}
\author{Qizheng Yin}
\address{Department of Mathematics, ETH Z\"urich}
\email {qizheng.yin@math.ethz.ch}
\date{July 2016}

\begin{abstract} 
We study the interplay of the moduli of curves and the
moduli of $K3$ surfaces via the virtual class of
the moduli spaces of stable maps. Using 
Getzler's relation in genus 1, we construct a
universal decomposition of the diagonal in Chow in the
third fiber product of the universal $K3$ surface.
The decomposition has terms supported on Noether-Lefschetz
loci which are not visible in the Beauville-Voisin
decomposition for a fixed $K3$ surface.
As a result of our universal decomposition, we prove
the conjecture of Marian-Oprea-Pandharipande: the full
tautological ring of the moduli space of $K3$ surfaces
is generated in Chow by the classes of the Noether-Lefschetz
loci. Explicit boundary relations are constructed for all
$\kappa$ classes.

More generally, we propose a connection between relations
in the tautological ring of the moduli spaces of curves
and relations in the tautological ring
of the moduli space of $K3$ surfaces.
The WDVV relation in genus 0 is used in our proof 
of the MOP conjecture.
\end{abstract}

\maketitle

\setcounter{tocdepth}{1} 
\tableofcontents

\setcounter{section}{-1}
\newpage

\section{Introduction}

\subsection{$\kappa$ classes} \label{MMM}

Let $\mathcal{M}_{2\ell}$ be the moduli space of quasi-polarized
$K3$ surfaces $(X,H)$ of degree $2\ell>0$: 
\begin{enumerate}
\item[$\bullet$]  $X$ is a nonsingular, projective $K3$ surface over $\com$, 
\item[$\bullet$] 
$H\in \text{Pic}(X)$ is a primitive and nef class
satisfying
$$\langle H, H\rangle_X\,  =\,  \int_X H^2\,  =\,  2\ell\, .$$ 
\end{enumerate}
The basics of quasi-polarized $K3$ surfaces and their
moduli are reviewed in Section \ref{ooo}.

Consider the universal quasi-polarized $K3$ surface  over the moduli space,
$$ \pi: \mathcal{X} \rightarrow \mathcal{M}_{2\ell}\, .$$
We define a {canonical} divisor class on the universal surface,
$$\mathcal{H} \ \in \ \mathsf{A}^1(\mathcal{X},\mathbb{Q})\, ,$$
which restricts to $H$ on the fibers of $\pi$ 
by the following construction.
Let $\MM_{0,1}(\pi,H)$ be the $\pi$-relative moduli space of stable
maps: $\MM_{0,1}(\pi,H)$ parameterizes stable maps from genus 0 curves
with 1 marked point to the fibers of $\pi$ representing the fiberwise class
$H$. Let 
$$\epsilon: \MM_{0,1}(\pi,H) \rightarrow \mathcal{X}\, $$
be the evaluation morphism over $\mathcal{M}_{2\ell}$. 
The moduli space $\MM_{0,1}(\pi,H)$ carries a $\pi$-relative
reduced obstruction theory with reduced virtual class of $\pi$-relative
dimension $1$. We define
$$\mathcal{H} \, = \, \frac{1}{N_0(\ell)} \,\cdot \, \epsilon_*\left[ \MM_{0,1}(\pi,H)\right]^{\text{red}} 
\ \in \ \mathsf{A}^1(\mathcal{X},\mathbb{Q}) \, ,$$
where $N_0(\ell)$ is the genus 0 Gromov-Witten invariant{\footnote{While
$\ell>0$ is required for the quasi-polarization $(X,H)$, the reduced
Gromov-Witten invariant~$N_0(\ell)$ is well-defined for all $\ell\geq -1$.}}
$$N_0(\ell) = \int_{[\MM_{0,0}(X,H)]^{\text{red}}} 1\,. 
$$
By the Yau-Zaslow formula{\footnote{The formula was proposed
in \cite{yauz}. The first
proofs in the primitive case can be found in \cite{bea,brl}.
We will later require the 
full Yau-Zaslow formula for the genus 0 Gromov-Witten counts also in imprimitive classes
proven in \cite{KMPS}.}}, the invariant $N_0(\ell)$ is never 0 for $\ell\geq -1$,
$$\sum_{\ell=-1}^\infty q^\ell N_0(\ell) \,=\, \frac{1}{q}+ 24 + 324 q + 3200 q^2 \ldots\, .$$
The construction of $\mathcal{H}$ is discussed further in Section \ref{fafa2}.

The $\pi$-relative tangent bundle of $\mathcal{X}$,  
$$
\mathcal{T}_\pi\rightarrow \mathcal{X} \, ,
$$
is of rank 2 and is canonically defined.
Using $\mathcal{H}$ and $c_2(\mathcal{T}_\pi)$, we define
the $\kappa$ classes,
$$\kappa_{[a;b]} \, = \, \pi_*\left(\mathcal{H}^a \cdot 
c_2(\mathcal{T}_\pi)^b\right) 
\ \in \mathsf{A}^{a+2b-2}(\mathcal{M}_{2\ell},\mathbb{Q})\, .$$
Our definition follows \cite[Section 4]{MOP3} {\it except for the
canonical choice of $\mathcal{H}$}. The construction
here requires {\it no} choices to be made in the
definition of the $\kappa$ classes.

\subsection{Strict tautological classes} \label{hah2}

The Noether-Lefschetz loci
also define
classes in the Chow ring $\AA(\mathcal{M}_{2\ell},\mathbb{Q})$. Let
$$\NL(\mathcal{M}_{2\ell}) \subset \AA(\mathcal{M}_{2\ell},\mathbb{Q})$$
be the subalgebra generated by the Noether-Lefschetz loci (of all codimensions).
On the Noether-Lefschetz locus{\footnote{We view the Noether-Lefschetz
loci as proper maps to $\mathcal{M}_{2\ell}$ instead of subspaces.}} 
$$\mathcal{M}_{\Lambda}\rightarrow \mathcal{M}_{2\ell}\, ,$$
 corresponding to 
the larger Picard lattice $\Lambda \supset (2\ell)$, richer $\kappa$
classes may be defined by simultaneously using several elements of
$\Lambda$. 

We define {\it canonical} $\kappa$ classes based on the lattice polarization $\Lambda$.
A nonzero class
$L\in \Lambda$ is {\it admissible} if
\begin{enumerate}
\item[(i)] $L= m \cdot \widetilde{L}$ with $\widetilde{L}$ primitive,
$m>0$, and 
$\langle \widetilde{L}, \widetilde{L}\rangle_\Lambda\geq -2$,
\item[(ii)] $\langle H, L\rangle_\Lambda \geq 0$,
\end{enumerate}
and in case of equality in (ii), which forces equality in (i) by the Hodge index theorem,
\begin{enumerate}
\item[(ii')] $L$ is effective.
\end{enumerate}
Effectivity is
{\it equivalent} to the condition
$$\langle H, L\rangle_\Lambda \geq 0\, $$
for {\it every} quasi-polarization $H\in \Lambda$ for a generic
$K3$ surface parameterized by $\mathcal{M}_\Lambda$.


For $L\in \Lambda$ admissible, we define
$$\mathcal{L} \, = \, \frac{1}{N_0(L)} \,\cdot \, 
\epsilon_*\left[ \MM_{0,1}(\pi_\Lambda,L)\right]^{\text{red}} 
\ \in \ \mathsf{A}^1(\mathcal{X}_\Lambda,\mathbb{Q}) \, ,$$
where $\pi_\Lambda:\mathcal{X}_\Lambda \rightarrow \mathcal{M}_{\Lambda}$ is
the universal $K3$ surface.
The reduced Gromov-Witten invariant
$$N_0(L) = \int_{[\MM_{0,0}(X,L)]^{\text{red}}} 1$$
is nonzero for all admissible classes by the full Yau-Zaslow
formula proven in \cite{KMPS}, see Section \ref{gen0}.

For $L_1,\ldots,L_k\in \Lambda$  admissible classes, we have 
canonically constructed divisors
$$\mathcal{L}_1,\ldots,\mathcal{L}_k 
\ \in \ \mathsf{A}^1(\mathcal{X}_\Lambda,\mathbb{Q})\, .$$
We define the richer $\kappa$ classes on $\mathcal{M}_\Lambda$ by
\begin{equation}\label{xrrx}
\kappa_{[L_1^{a_1},\ldots,L_k^{a_k};b]} \, = \,
\pi_{\Lambda*}\left( \mathcal{L}_1^{a_1}\cdots
\mathcal{L}_k^{a_k}
 \cdot 
c_2(\mathcal{T}_{\pi_\Lambda})^b\right) 
\ \in \ \mathsf{A}^{\sum_i a_i+2b-2}(\mathcal{M}_{\Lambda},\mathbb{Q})\, .
\end{equation}
We will sometimes suppress the dependence on the $L_i$,
$$\kappa_{[L_1^{a_1},\ldots,L_k^{a_k}; b]}=
\kappa_{[{a_1},\ldots,{a_k}; b]}\, .$$

We define the {\it strict tautological ring} of the moduli space
of $K3$ surfaces,
$${\mathsf{R}}^\star(\mathcal{M}_{2\ell}) \subset \AA(\mathcal{M}_{2\ell},\mathbb{Q})\, ,$$
to be the subring generated by the push-forwards from
the Noether-Lefschetz loci $\mathcal{M}_\Lambda$ of 
all products of
the $\kappa$ classes \eqref{xrrx} obtained
from admissible classes of $\Lambda$. By definition, 
$$\NL(\mathcal{M}_{2\ell}) \subset {\mathsf{R}}^\star(\mathcal{M}_{2\ell})\, .$$
There is no need to include a $\kappa$ index for the
first Chern class of $\mathcal{T}_\pi$ since
$$c_1(\mathcal{T}_\pi) = -\pi^*\lambda$$ where $\lambda=c_1(\mathbb E)$ is the first Chern class of the Hodge line bundle
$$\mathbb{E} \rightarrow \mathcal{M}_{2\ell}$$
with fiber $H^0(X,K_X)$ over the moduli
point $(X,H)\in \mathcal{M}_{2\ell}$. 
The Hodge class $\lambda$ is known to be supported on Noether-Lefschetz 
divisors.{\footnote{
By \cite[Theorem 1.2]{BKPSB},  $\lambda$ on $\mathcal{M}_\Lambda$ is supported on Noether-Lefschetz divisors for every lattice polarization~$\Lambda$. See
also \cite[Theorem 3.1]{DM} for a stronger statement: $\lambda$ on $\mathcal{M}_{2\ell}$ is supported on any infinite collection of Noether-Lefschetz divisors.}}

A slightly different {tautological ring} of the moduli space
of $K3$ surfaces
was defined in \cite{MOP3}. 
A basic result conjectured in \cite{MP} and proven in \cite{Ber} is the
isomorphism
$$\mathsf{NL}^1(\mathcal{M}_{2\ell}) = \mathsf{A}^1(\mathcal{M}_{2\ell},\mathbb{Q})\, .$$
In fact, the Picard group of $\mathcal{M}_{\Lambda}$ is generated
by the Noether-Lefschetz divisors of $\mathcal{M}_{\Lambda}$ 
for every lattice polarization $\Lambda$ of rank $\leq 17$ by \cite{Ber}.
As an immediate consequence, the strict tautological ring
defined here is isomorphic to the tautological ring of \cite{MOP3}
in all codimensions up to 17. Since the dimension of
$\mathcal{M}_{2\ell}$ is 19, the differences in the two definitions
are only possible in degrees 18 and 19.

We prefer to work with the strict tautological ring. A basic
advantage is that the $\kappa$ classes
are defined canonically (and not {\it up to twist} as in \cite{MOP3}).
Every class of the strict tautological ring $\mathsf{R}^\star(\mathcal{M}_{2\ell})$ is defined 
explicitly. A central result of the paper is the following generation property conjectured
first in \cite{MOP3}.

\begin{theorem} \label{dxxd} The strict tautological ring is generated
by Noether-Lefschetz loci,
$$\NL(\mathcal{M}_{2\ell}) = \mathsf{R}^\star(\mathcal{M}_{2\ell})\, .$$
\end{theorem}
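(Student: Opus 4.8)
The inclusion $\NL(\mathcal{M}_{2\ell}) \subseteq \RR(\mathcal{M}_{2\ell})$ is immediate from the definitions, so the entire content is the reverse inclusion, and the plan is to show that every generator of the strict tautological ring lies in the subalgebra generated by Noether--Lefschetz loci. Because the pushforward along a Noether--Lefschetz locus $\mathcal{M}_\Lambda \to \mathcal{M}_{2\ell}$ of an $\mathsf{NL}$-class is again an $\mathsf{NL}$-class, and because a product of the $\kappa$ classes \eqref{xrrx} is itself the $\pi_{\Lambda*}$-pushforward of a monomial in the divisors $\mathcal{L}_i$ and the class $c_2(\mathcal{T}_{\pi_\Lambda})$ from a suitable fiber power of $\mathcal{X}_\Lambda$, it suffices to prove on each $\mathcal{M}_\Lambda$ separately that every $\kappa_{[L_1^{a_1},\ldots,L_k^{a_k};b]}$ lies in $\NL(\mathcal{M}_\Lambda)$. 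I would run this as an induction on the divisor degree $\sum_i a_i$.

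The heart of the argument is a \emph{universal decomposition of the diagonal}. On a single $K3$ surface the Beauville--Voisin theorem expresses the small diagonal in the triple product in terms of the canonical degree-one zero-cycle, yielding the proportionality of a product of two divisors to the distinguished point class together with the identity $c_2(T_X)=24\,[\mathrm{pt}]$. Over $\mathcal{M}_\Lambda$ these naive relations fail, but only up to cycles which are forced by monodromy to be supported on Noether--Lefschetz loci. I would therefore aim to establish, in $\mathsf{A}^*(\mathcal{X}_\Lambda \times_{\mathcal{M}_\Lambda} \mathcal{X}_\Lambda \times_{\mathcal{M}_\Lambda} \mathcal{X}_\Lambda,\mathbb{Q})$, a decomposition of the relative small diagonal into a Beauville--Voisin part --- products of the divisors $\mathcal{L}_i,\mathcal{H}$ pulled back from the three factors together with $c_2(\mathcal{T}_{\pi_\Lambda})$ --- plus a remainder supported on Noether--Lefschetz loci. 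The source of such a relation is a tautological relation on $\MM_{g,n}$: the genus-$0$ WDVV relation and Getzler's genus-$1$ relation, transported to the $K3$ fibration through the relative reduced virtual classes of the spaces $\MM_{0,1}(\pi_\Lambda,L)$ that already define the $\mathcal{L}_i$.

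Granting the universal decomposition, the reduction is essentially formal. The product of two divisors is governed by the relative small diagonal through $\mathcal{L}_i\cdot\mathcal{L}_j = p_{3*}\big(p_1^*\mathcal{L}_i\cdot p_2^*\mathcal{L}_j\cdot\Delta_{123}\big)$, with $p_1,p_2,p_3$ the projections of the triple relative product. Substituting the decomposition, the Beauville--Voisin part contributes the expected multiple $\tfrac{1}{24}\langle L_i,L_j\rangle_\Lambda\, c_2(\mathcal{T}_{\pi_\Lambda})$ of the relative point class, while the remainder is a pullback of Noether--Lefschetz classes from deeper loci. This trades two divisor factors for one factor of $c_2(\mathcal{T}_{\pi_\Lambda})$, so the induction on $\sum_i a_i$ proceeds. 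Since $\mathcal{L}_i\cdot c_2(\mathcal{T}_{\pi_\Lambda})$ and $c_2(\mathcal{T}_{\pi_\Lambda})^2$ restrict to zero on the fibers, the surviving base cases have $\sum_i a_i\le 1$ and reduce, using $c_1(\mathcal{T}_{\pi_\Lambda})=-\pi_\Lambda^*\lambda$ together with the Noether--Lefschetz support of $\lambda$, to classes in $\NL(\mathcal{M}_\Lambda)$; an auxiliary induction along the Noether--Lefschetz stratification absorbs the remainder terms. Applying $\pi_{\Lambda*}$ places each $\kappa$ class in $\NL(\mathcal{M}_\Lambda)$, and pushing forward to $\mathcal{M}_{2\ell}$ gives $\RR(\mathcal{M}_{2\ell})\subseteq\NL(\mathcal{M}_{2\ell})$, which with the trivial inclusion yields the theorem.

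I expect the main obstacle to be the construction of the universal diagonal decomposition itself. Two features make it delicate. First, $K3$ surfaces carry trivial canonical class, so the ordinary Gromov--Witten virtual classes vanish and one must work throughout with the reduced obstruction theory; keeping the reduced theory compatible with the relations pulled back from $\MM_{g,n}$ and correctly computing the coefficients --- where the nonvanishing of the normalizing invariants $N_0(L)$, guaranteed by the full Yau--Zaslow formula of \cite{KMPS}, enters --- is the technical core. Second, one must verify that the correction terms produced by Getzler's relation are genuinely supported on Noether--Lefschetz loci rather than contributing new tautological classes, i.e.\ that every monodromy-invariant cycle appearing outside the Beauville--Voisin part is of Noether--Lefschetz type. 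This last verification, together with the translation of the genus-$1$ relation into a statement about the relative small diagonal, is where I expect the crux of the proof to lie.
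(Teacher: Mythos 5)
Your overall architecture coincides with the paper's: export the genus 0 WDVV and genus 1 Getzler relations through the reduced virtual class, extract a universal Beauville--Voisin-type decomposition of the relative small diagonal (this is exactly Corollary \ref{bvdiag}), and then run a formal induction, uniformly over all lattice polarizations, that trades fiber-divisor products for the relative point class, absorbing terms supported on deeper Noether--Lefschetz strata. The genuine gap is at the base of your induction. When you substitute the decomposition into $\mathcal{L}_i\cdot\mathcal{L}_j = \mathrm{pr}_{(3)*}\bigl(\mathcal{L}_{i,(1)}\,\mathcal{L}_{j,(2)}\,\Delta_{(123)}\bigr)$, the output is \emph{not} ``point-class multiple plus classes supported on deeper NL loci.'' Carrying out the push-forward with the principal terms of ($\ddag'$) gives
$$2\ell\,\mathcal{L}_i\mathcal{L}_j \,=\, \langle L_i,L_j\rangle_\Lambda\,\mathcal{H}^2 \,+\, \pi_\Lambda^*\kappa_{[H^2,L_i;0]}\cdot\mathcal{L}_j \,+\, \pi_\Lambda^*\kappa_{[H^2,L_j;0]}\cdot\mathcal{L}_i \,-\, \pi_\Lambda^*\kappa_{[H^2,L_i,L_j;0]} \,+\,\ldots$$
where only the dots are NL-supported. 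The middle terms are principal and involve the divisorial classes $\kappa_{[H^2,L_i;0]}$, i.e.\ classes of exactly the type whose NL membership you are trying to prove. So your induction is circular unless the divisorial case is established first and by independent means.

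Your proposed treatment of that base case does not work. First, fiberwise vanishing of $\mathcal{L}\cdot c_2(\mathcal{T}_{\pi_\Lambda})$ or $c_2(\mathcal{T}_{\pi_\Lambda})^2$ does not make these classes pull-backs from $\mathcal{M}_\Lambda$; that implication is special to divisors (via $H^1(X,\mathcal{O}_X)=0$ and base change) and fails in codimension $\geq 2$. Second, the divisor classes $\kappa_{[L;1]}=\pi_{\Lambda*}\bigl(\mathcal{L}\cdot c_2(\mathcal{T}_{\pi_\Lambda})\bigr)$, $\kappa_{[L^3;0]}$, $\kappa_{[L_1,L_2,L_3;0]}$ cannot be reached by any argument using only $c_1(\mathcal{T}_{\pi_\Lambda})=-\pi_\Lambda^*\lambda$ and the NL support of $\lambda$: this divisorial case is where the paper spends the bulk of its effort (Section \ref{dvdv}, Cases A--E). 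It requires the exported WDVV relation ($\dag$) \emph{and} the exported Getzler relation ($\ddag$) on $\mathcal{X}^4_\Lambda$ with all non-principal terms written out explicitly --- the paper stresses that the diagonal decomposition ($\ddag'$) alone is insufficient, being the strictly weaker image of ($\ddag$) under $\mathrm{pr}_{(123)*}(\mathcal{H}_{(4)}\cdot -)$ --- together with the auxiliary divisor $Z(L)$ of Proposition \ref{g1g1}, the nonvanishing results $N_0(L)\neq 0$ and $N_1(L)\neq 0$ (Propositions \ref{trtr} and \ref{trtrtr}), the Hodge index theorem, and an explicit rank computation on the linear system of relations (the matrix \eqref{matx}), with separate arguments for $\langle L,L\rangle_\Lambda>0$, $<0$, and $=0$. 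Without this input your induction has no base, and no amount of diagonal substitution can supply it.
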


Our construction also defines 
the strict tautological ring 
$${\mathsf{R}}^\star(\mathcal{M}_{\Lambda})\subset
\mathsf{A}^\star(\mathcal{M}_{\Lambda},\mathbb{Q})$$
for every lattice polarization $\Lambda$.
As before, the subring generated by the Noether-Lefschetz
loci corresponding to lattices $\widetilde{\Lambda} \supset
\Lambda$ is contained in the strict tautological~ring,
$$\NL(\mathcal{M}_{{\Lambda}}) \subset {\mathsf{R}}^\star(\mathcal{M}_{\Lambda})\, .$$
In fact, we prove a generation result parallel to Theorem
\ref{dxxd} for every lattice
polarization,
$$\NL(\mathcal{M}_{\Lambda}) = \mathsf{R}^\star(\mathcal{M}_{\Lambda}) \,.$$


\subsection{Fiber products of the universal surface} \label{ttun}
Let $\mathcal{X}^n$ denote the $n^{\text{th}}$ fiber product of the
universal $K3$ surface over $\mathcal{M}_{2\ell}$,
$$\pi^n: \mathcal{X}^n \rightarrow \mathcal{M}_{2\ell}\, .$$
The strict tautological ring 
$${\mathsf{R}}^\star(\mathcal{X}^n) \subset 
\mathsf{A}^\star(\mathcal{X}^n,\mathbb{Q})$$
is defined 
to be the subring generated 
by the push-forwards to $\mathcal{X}^n$ from the Noether-Lefschetz loci
$$\pi^n_\Lambda: \mathcal{X}^n_\Lambda \rightarrow \mathcal{M}_{\Lambda}\, $$
of all products of 
\begin{enumerate}
\item[$\bullet$] the $\pi^n_\Lambda$-relative diagonals in $\mathcal{X}_\Lambda^n$,
\item[$\bullet$] the pull-backs of $\mathcal{L} \in \mathsf{A}^1(\mathcal{X}_\Lambda,\mathbb{Q})$ via the $n$ projections
$$\mathcal{X}_\Lambda^n \rightarrow \mathcal{X}_\Lambda$$
for every admissible $L\in \Lambda$,
\item[$\bullet$] the pull-backs of $c_2(\mathcal{T}_{\pi_\Lambda}) \in \mathsf{A}^2(\mathcal{X}_\Lambda,\mathbb{Q})$ via the $n$ projections,
\item[$\bullet$] the pull-backs of ${\mathsf{R}}^\star({\mathcal{M}}_{\Lambda})$
via $\pi^{n*}_\Lambda$.
\end{enumerate}
The construction also defines 
the strict tautological ring 
$${\mathsf{R}}^\star(\mathcal{X}^n_{\Lambda})\subset
\mathsf{A}^\star(\mathcal{X}^n_{\Lambda},\mathbb{Q})$$
for every lattice polarization $\Lambda$.

\subsection{Export construction} \label{conjs}
Let $\MM_{g,n}(\pi_\Lambda,L)$
be the $\pi_\Lambda$-relative moduli space of stable maps representing the  admissible class $L\in \Lambda$. The 
evaluation map at the $n$ markings is
$$\epsilon^n: \MM_{g,n}(\pi_\Lambda,L) \rightarrow \mathcal{X}^n_\Lambda\, .$$

\begin{conjecture} \label{conj1}
The push-forward of the reduced
virtual fundamental class lies in the strict tautological ring,
$$\epsilon_*^n \left[ \MM_{g,n}(\pi_\Lambda,L)\right]^{\textup{red}} 
\ \in \ {\mathsf{R}}^\star(\mathcal{X}^n_\Lambda)\, .$$
\end{conjecture}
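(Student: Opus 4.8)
The plan is to prove the statement by a double induction on the genus $g$ and the number of markings $n$, taking as the base case the defining property of the divisor classes. For $g=0$ and $n=1$ the construction of $\mathcal{L}$ gives at once
$$\epsilon_*\left[\MM_{0,1}(\pi_\Lambda,L)\right]^{\textup{red}} = N_0(L)\cdot \mathcal{L} \ \in \ {\mathsf{R}}^\star(\mathcal{X}_\Lambda),$$
and since $N_0(L)\neq 0$ by the full Yau--Zaslow formula, this case is settled. The goal is then to reduce every triple $(g,n,L)$ to invariants with strictly smaller $(g,n)$ whose evaluation push-forwards are already known to be tautological.

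First I would treat the markings. The map forgetting the last point
$$\mathrm{ft}\colon \MM_{g,n+1}(\pi_\Lambda,L)\to \MM_{g,n}(\pi_\Lambda,L)$$
exhibits the source as the universal curve over the target, and $[\MM_{g,n+1}]^{\textup{red}} = \mathrm{ft}^*[\MM_{g,n}]^{\textup{red}}$ by compatibility of the reduced obstruction theories. Under this identification the evaluation $\epsilon_{n+1}$ is the universal stable map. Pushing the pulled-back classes $\epsilon_{n+1}^*\mathcal{L}$ and $\epsilon_{n+1}^*c_2(\mathcal{T}_{\pi_\Lambda})$ forward along $\mathrm{ft}$ produces three kinds of contributions: a main term supported at a generic point of the domain, reproducing a projection pull-back of $\mathcal{L}$ or of $c_2(\mathcal{T}_{\pi_\Lambda})$; collision terms where the new point meets one of the sections $s_1,\dots,s_n$, which are precisely the $\pi^n_\Lambda$-relative diagonals; and boundary terms where the domain degenerates, which lower either $n$ or $g$. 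Each is a tautological generator or falls under the inductive hypothesis, which gives the induction on $n$.

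For the genus I would pinch a non-separating loop to relate $\MM_{g,n}(\pi_\Lambda,L)$ to $\MM_{g-1,n+2}(\pi_\Lambda,L)$ along the two glued markings, together with the separating degenerations splitting off lower-genus components. This is exactly the point at which relations among boundary divisors in the moduli of curves $\MM_{g,n}$ — the WDVV and Getzler relations used elsewhere in the paper, together with their higher-genus analogues — are exported to the $K3$ side: such a relation pulls back through the stabilization map $\MM_{g,n}(\pi_\Lambda,L)\to \MM_{g,n}$ and, after capping with the reduced class and applying $\epsilon^n_*$, expresses the genus $g$ class as a combination of strata of strictly lower genus. Combined with the induction on $n$, this would close the double induction.

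The hard part will be the behavior of the reduced virtual class under these gluing and degeneration operations. Unlike the ordinary virtual class, the reduced class does not obey the naive splitting axiom: the one-dimensional piece of obstruction that is removed, coming from the holomorphic symplectic form of the $K3$ surface, lives on a single component of a degenerate domain, so a product decomposition across a node acquires correction terms that must be isolated and shown to stay tautological. Establishing a clean reduced splitting and degeneration formula in families over $\mathcal{M}_\Lambda$, and controlling these corrections, is the crux. A secondary difficulty is the case of an imprimitive class $L=m\widetilde{L}$, where the geometry of $\MM_{g,n}(\pi_\Lambda,L)$ is governed by multiple covers; here one must combine the nonvanishing of $N_0(L)$ with the multiple-cover structure of the reduced theory to keep the push-forwards inside ${\mathsf{R}}^\star(\mathcal{X}^n_\Lambda)$.
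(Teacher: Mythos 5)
Your statement is Conjecture \ref{conj1} of the paper, and it is \emph{open} there: the authors prove only the special cases needed to export the WDVV and Getzler relations, namely genus $0$ with arbitrary $n$ (Proposition \ref{zzzr}) and genus $1$ with $n=1,2$ (Propositions \ref{g1p1} and \ref{g1g1}), stating explicitly that ``the required parts of Conjectures \ref{conj1} and \ref{conj2} are proven by hand.'' So any complete argument along your lines would be new mathematics, and as it stands your sketch has gaps at precisely the points where even the paper's partial results required serious work.

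First, the induction on $n$. Your ``main term'' claim --- that pushing the universal curve forward reproduces the projection pull-back of $\mathcal{L}$ against the reduced class --- is exactly the content of Theorem \ref{dldl}: the Hilbert--Chow class $\w{\mathcal{L}}$ of the image of the universal curve satisfies $\w{\mathcal{L}} \cap [\mathcal{X}_\MM]^{\textup{red}} = \mathcal{L} \cap [\mathcal{X}_\MM]^{\textup{red}}$. Fiberwise agreement only shows that $\w{\mathcal{L}} - \mathcal{L}$ is the pull-back of an \emph{unknown} divisor class from the moduli space of maps; proving that this difference dies against the reduced class occupies all of Section \ref{pfpf5}, where WDVV is exported over $\MM_{0,0}(\pi_\Lambda,L)$ itself and a nondegenerate linear system in the classes $\w{\kappa}_{[L;1]}$, $\w{\kappa}_{[L^3;0]}$, $\w{\kappa}_{[H,L^2;0]}$ is solved. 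That argument is special to genus $0$, and nothing in your proposal supplies the analogue for $g\geq 1$; without it, even your step from $n=1$ to $n=2$ in fixed genus is unjustified. Second, and more seriously, the genus induction has no mechanism. The locus of maps with a nonseparating node is a boundary \emph{divisor} of $\MM_{g,n}(\pi_\Lambda,L)$; the reduced class of the whole space is not supported on it, and no tautological relation on $\MM_{g,n}$ can express the fundamental class in terms of boundary strata, since $[\MM_{g,n}]$ restricts nontrivially to the open locus of smooth curves while boundary classes restrict to zero. WDVV and Getzler are relations among particular boundary, respectively codimension-two, strata; exporting them yields relations \emph{among} push-forwards, never a formula reducing $\epsilon^n_*[\MM_{g,n}(\pi_\Lambda,L)]^{\textup{red}}$ to lower genus. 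This is exactly why the statement remains a conjecture in the paper: the difficulty you flag at the end (failure of the naive splitting axiom for the reduced class) is real, but even granting a clean reduced splitting formula, your outer induction would have nothing to induct on.
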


When Conjecture \ref{conj1} is restricted to a fixed $K3$ surface $X$, 
another open question is obtained.

\begin{conjecture} \label{conj2}
The push-forward of the reduced
virtual fundamental class,
$$\epsilon_*^n \left[ \MM_{g,n}(X,L)\right]^{\textup{red}} 
\ \in \ {\mathsf{A}}^\star(X^n,\mathbb{Q})\, ,$$
lies in the Beauville-Voisin ring of $X^n$ generated by the diagonals and
the pull-backs of $\textup{Pic}(X)$ via the $n$ projections.
\end{conjecture}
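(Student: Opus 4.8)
The plan is to obtain Conjecture \ref{conj2} as the fiberwise restriction of the universal Conjecture \ref{conj1} and, lacking the full universal statement, to attack the fixed surface directly. For the restriction, fix a moduli point $(X,H)\in\mathcal{M}_\Lambda$; the fiber of $\pi^n_\Lambda$ over it is $X^n$, and the relative moduli space $\MM_{g,n}(\pi_\Lambda,L)$ restricts over this point to the absolute space $\MM_{g,n}(X,L)$. By the compatibility of reduced virtual classes with the refined Gysin pull-back to a fiber, together with the compatibility of the evaluation maps, the class $\epsilon^n_*[\MM_{g,n}(X,L)]^{\textup{red}}$ is the fiber restriction of $\epsilon^n_*[\MM_{g,n}(\pi_\Lambda,L)]^{\textup{red}}$. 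One then checks that the whole strict tautological ring $\mathsf{R}^\star(\mathcal{X}^n_\Lambda)$ restricts into the Beauville--Voisin ring of $X^n$: the $\pi^n_\Lambda$-relative diagonals restrict to the diagonals of $X^n$; each divisor $\mathcal{L}$ restricts to $c_1(L)\in\textup{Pic}(X)$; the pull-backs of $\mathsf{R}^\star(\mathcal{M}_\Lambda)$ restrict to rational multiples of the fundamental class $[X^n]$, as the base collapses to a point; and $c_2(\mathcal{T}_{\pi_\Lambda})$ restricts to $c_2(T_X)$, which by the Beauville--Voisin relations is a rational multiple of $c_1(H)^2$ and so already lies in the Beauville--Voisin ring. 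Granting Conjecture \ref{conj1}, this yields Conjecture \ref{conj2}.

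For an unconditional attack I would induct on $n$. The reduced virtual dimension of $\MM_{g,n}(X,L)$ is $g+n$, so the push-forward lives in $\mathsf{A}_{g+n}(X^n,\mathbb{Q})$; since $\dim X^n=2n$, the class vanishes for $g>n$, and the content is confined to $g\le n$. The base cases are immediate: the push-forward to $X$ lands in $\mathsf{A}_{g+1}(X,\mathbb{Q})$, which is $\textup{Pic}(X)$ for $g=0$, is $\mathbb{Q}\cdot[X]$ for $g=1$, and vanishes for $g\ge 2$, in every case inside the Beauville--Voisin ring. For the inductive step I would use the forgetful morphism $\MM_{g,n+1}(X,L)\to\MM_{g,n}(X,L)$, whose fibers form the universal curve and which pulls back the reduced virtual class, to express the class on $X^{n+1}$ through the class on $X^n$, the pulled-back divisor $c_1(L)$, and the diagonal corrections produced on the boundary where the new marking collides with an existing one. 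These corrections are governed by the Beauville--Voisin decomposition of the small diagonal, which the paper establishes universally from Getzler's relation and which restricts on each fiber to the classical relation on $X$.

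The main obstacle is the genuinely higher-genus content in Chow. In genus $0$ the image curves are rational, every $0$-cycle supported on them is a canonical multiple of the Beauville--Voisin point class, and the inductive corrections therefore close up inside the Beauville--Voisin ring. For $g>0$ the image curves are not rational, and an individual marked point is not canonical in the infinite-dimensional group $\mathsf{A}_0(X,\mathbb{Q})$; one must instead exploit that the push-forward is taken over a positive-dimensional family of stable maps, so that the non-canonical part of the moving $0$-cycles cancels and leaves only a Beauville--Voisin class. Making this cancellation precise in Chow --- rather than in cohomology, where the Hodge-theoretic reduction is comparatively routine --- is where I expect the difficulty to concentrate, and it is the reason the assertion is posed as a conjecture rather than proved.
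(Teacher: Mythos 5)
The first point to make is that the statement you set out to prove is Conjecture \ref{conj2} of the paper, and the paper does \emph{not} prove it --- it is posed there as an open question. The only results of this type the paper establishes are the relative (universal-family) cases needed for its exportation arguments: genus $0$ for all $n$ (Proposition \ref{zzzr}) and genus $1$ for $n\leq 2$ (Propositions \ref{g1p1} and \ref{g1g1}); even these are not formal, resting on Theorem \ref{dldl}, whose proof exports the WDVV relation over $\MM_{0,0}(\pi_\Lambda,L)$. So your attempt must stand on its own, and it does not. Your first reduction is conditional on Conjecture \ref{conj1}, which is equally open, so at best it transfers one conjecture to another. Within that reduction there is also an unaddressed point: if the fixed surface $X$ has Picard lattice strictly larger than $\Lambda$, then $(X,H)$ lies on Noether--Lefschetz loci, and the restriction to $X^n$ of classes pushed forward from $\mathcal{X}^n_{\widetilde{\Lambda}}$ is computed by excess intersection, not by naive restriction of diagonals and divisors; that these excess terms land in the Beauville--Voisin ring would need a separate argument.

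The unconditional induction on $n$ has a genuine gap exactly at its inductive step. The compatibility $[\MM_{g,n+1}(X,L)]^{\text{red}}=f^*[\MM_{g,n}(X,L)]^{\text{red}}$ under the forgetful map $f$ does not determine $\epsilon^{n+1}_*$ in terms of $\epsilon^{n}_*$: the evaluation map at the new marking does not factor through $f$, so no projection formula applies. What the step actually requires is the Chow class of the universal image cycle swept out by the curves over (cycles in) $\MM_{g,n}(X,L)$, expressed through evaluation push-forwards. That is a Hilbert--Chow type comparison, and it is precisely the content of the paper's Theorem \ref{dldl} (the classes $\w{\mathcal{L}}$ versus $\mathcal{L}$), which already in the simplest case --- genus $0$, one marking, over the universal family --- is not formal and is proven by exporting WDVV and using $N_0(L)\neq 0$. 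Your induction silently assumes a fixed-surface analogue of this in every genus and for every $n$; that assumption is essentially the conjecture itself. Note also that the divisor equation runs the wrong way for your purposes: it recovers the $n$-point class from the $(n+1)$-point class, not conversely. Your closing paragraph correctly identifies the real obstruction (for $g>0$, $0$-cycles on non-rational image curves are not canonical in $\mathsf{A}_0(X,\mathbb{Q})$, and the needed cancellation over the family would have to be established in Chow, not merely in cohomology), but no mechanism for that cancellation is offered. As it stands, the proposal is a plausibility discussion consistent with --- rather than a resolution of --- the fact that the statement remains a conjecture.
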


If Conjecture \ref{conj1} could be proven also for descendents (and in an effective form), 
then we could export
tautological relations on $\MM_{g,n}$ to $\mathcal{X}^n_\Lambda$ via the
morphisms
$$ \MM_{g,n}\ \stackrel{\tau}{\longleftarrow} \ \MM_{g,n}(\pi_\Lambda, L) \ 
\stackrel{\epsilon_\Lambda^n}{\longrightarrow}\  
\mathcal{X}^n_{\Lambda}\, .$$
More precisely, given a relation $\mathsf{Rel}$ among tautological
classes on $\MM_{g,n}$,
$$\epsilon_*^n \tau^*(\mathsf{Rel}) \,=\, 0 \ \in\ {\mathsf{R}}^\star
(\mathcal{X}^n_\Lambda)$$
would then be a relation among strict tautological classes on
$\mathcal{X}^n_\Lambda$.

We prove Theorem \ref{dxxd} as a consequence of the export construction
for the WDVV relation in genus 0 and for Getzler's relation in genus 1.
The required parts of Conjectures~\ref{conj1} and \ref{conj2} are proven by hand.

\subsection{WDVV and Getzler}
We fix an admissible class $L \in \Lambda$ and the corresponding 
divisor ${\mathcal{L}} \in \mathsf{A}^1(\mathcal{X}_{\Lambda}, \mathbb{Q})$. For $i \in \{1, \ldots, n\}$, let
$${\mathcal{L}}_{(i)} \ \in \ \mathsf{A}^1(\mathcal{X}_\Lambda^n, \mathbb{Q})$$
denote 
the pull-back of ${\mathcal{L}}$ via the $i^{\text{th}}$ projection
$$\text{pr}_{(i)} : \mathcal{X}_\Lambda^n \to \mathcal{X}_\Lambda \,.$$
For $1 \leq i < j \leq n$, let
$$\Delta_{(ij)} \ \in \ \mathsf{A}^2(\mathcal{X}_\Lambda^n, \mathbb{Q})$$
be the $\pi_\Lambda^n$-relative diagonal where the $i^{\text{th}}$ and $j^{\text{th}}$ coordinates are
equal. We  write $$\Delta_{(ijk)} \, = \, \Delta_{(ij)} \cdot \Delta_{(jk)} \ \in \ \mathsf{A}^4(\mathcal{X}_\Lambda^n, \mathbb{Q}) \,.$$

The Witten-Dijkgraaf-Verlinde-Verlinde relation in genus 0 is
\begin{equation} \label{wdvv}
\left[\begin{tikzpicture}[baseline={([yshift=-.5ex]current bounding box.center)}]
	\node[leg] (l3) at (0,2) [label=above:$3$] {};
	\node[leg] (l4) at (1,2) [label=above:$4$] {};
	\node[vertex] (v2) at (.5,1.5) [label=right:$0$] {};
	\node[vertex] (v1) at (.5,.5) [label=right:$0$] {};
	\node[leg] (l1) at (0,0) [label=below:$1$] {};
	\node[leg] (l2) at (1,0) [label=below:$2$] {};
	\path
		(l3) edge (v2)
		(l4) edge (v2)
		(v2) edge (v1)
		(v1) edge (l1)
		(v1) edge (l2)
	;
\end{tikzpicture}\right] \ - \ \left[\begin{tikzpicture}[baseline={([yshift=-.5ex]current bounding box.center)}]
	\node[leg] (l2) at (0,2) [label=above:$2$] {};
	\node[leg] (l4) at (1,2) [label=above:$4$] {};
	\node[vertex] (v2) at (.5,1.5) [label=right:$0$] {};
	\node[vertex] (v1) at (.5,.5) [label=right:$0$] {};
	\node[leg] (l1) at (0,0) [label=below:$1$] {};
	\node[leg] (l3) at (1,0) [label=below:$3$] {};
	\path
		(l2) edge (v2)
		(l4) edge (v2)
		(v2) edge (v1)
		(v1) edge (l1)
		(v1) edge (l3)
	;
\end{tikzpicture}\right] \ = \ 0 \ \in \ \mathsf{A}^1(\MM_{0, 4}, \mathbb{Q})\, .
\end{equation}

\vspace{0pt}
\begin{theorem} \label{WDVV}
For all admissible $L\in \Lambda$,
exportation of the  WDVV relation yields
\begin{multline} \tag{\dag}
{\mathcal{L}}_{(1)} {\mathcal{L}}_{(2)} {\mathcal{L}}_{(3)} \Delta_{(34)} + {\mathcal{L}}_{(1)}{\mathcal{L}}_{(3)} {\mathcal{L}}_{(4)} \Delta_{(12)} \\ 
- {\mathcal{L}}_{(1)}{\mathcal{L}}_{(2)}{\mathcal{L}}_{(3)}\Delta_{(24)} - {\mathcal{L}}_{(1)} {\mathcal{L}}_{(2)}{\mathcal{L}}_{(4)}\Delta_{(13)}
+ \ldots 
\, = \, 0 \ \in \ \mathsf{A}^5(\mathcal{X}_\Lambda^4, \mathbb{Q})\,,
\end{multline}
where the dots stand for strict tautological classes supported over proper Noether-Lefschetz divisors
of $\mathcal{M}_\Lambda$.
\end{theorem}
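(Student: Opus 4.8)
The plan is to compute both sides of the exported relation by analyzing, for each of the two boundary divisors appearing in the WDVV relation \eqref{wdvv}, the pullback under $\tau$ to the relative space of stable maps followed by the pushforward under $\epsilon^4$ to $\mathcal{X}^4_\Lambda$. Fix the first boundary divisor $D_{\{3,4\}\mid\{1,2\}}\subset \MM_{0,4}$. Its pullback under $\tau$ is the corresponding boundary of $\MM_{0,4}(\pi_\Lambda,L)$, which decomposes as a sum over splittings $L=L_A+L_B$ of the curve class between the component carrying the markings $\{3,4\}$ and the component carrying $\{1,2\}$, each stratum being a fiber product over the node evaluation in $\mathcal{X}_\Lambda$. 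The heart of the computation is to control the reduced virtual class on each such stratum.

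First I would isolate the two splittings in which one side carries the full class $L$ and the other is contracted ($L_B=0$ or $L_A=0$). In the contracted case the genus-$0$, class-$0$ component is stable, since it carries two markings together with the node, and it forces those two markings to a common point: this produces the diagonal factor $\Delta_{(12)}$ (resp. $\Delta_{(34)}$). On the remaining component the reduced class survives, and its pushforward, recording the two markings together with the node point, is expressed through the divisor $\mathcal{L}$; after dividing by the normalizing constants $N_0(L)$ built into the definition of $\mathcal{L}$, these two strata contribute exactly $\mathcal{L}_{(1)}\mathcal{L}_{(2)}\mathcal{L}_{(3)}\Delta_{(34)}+\mathcal{L}_{(1)}\mathcal{L}_{(3)}\mathcal{L}_{(4)}\Delta_{(12)}$. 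The analogous analysis of the second WDVV divisor $D_{\{2,4\}\mid\{1,3\}}$ yields $\mathcal{L}_{(1)}\mathcal{L}_{(2)}\mathcal{L}_{(3)}\Delta_{(24)}+\mathcal{L}_{(1)}\mathcal{L}_{(2)}\mathcal{L}_{(4)}\Delta_{(13)}$, so that the difference of the two reproduces the four displayed terms of $(\dag)$.

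It then remains to treat the strata with $L_A,L_B$ both nonzero and to show they contribute only the error terms denoted by the dots. Here I would use the degeneration behavior of the reduced obstruction theory of a $K3$ surface: on a reducible genus-$0$ domain carrying nonzero class on both components, the single reduction coming from the holomorphic symplectic form (equivalently, from the Noether-Lefschetz deformations keeping $L$ of type $(1,1)$) can be applied to only one side, so the contribution factors with an ordinary, non-reduced genus-$0$ class on the other side. Restricting to a generic $K3$ surface with $\mathrm{Pic}(X)=\Lambda$, that ordinary factor vanishes, so the both-nonzero strata contribute nothing over the generic point of $\mathcal{M}_\Lambda$. Consequently their pushforward is a strict tautological class supported over a proper closed substack, carried by Noether-Lefschetz divisors, which is precisely the meaning of the dots in $(\dag)$.

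The main obstacle is this last step: making rigorous the reduced degeneration statement that simultaneously isolates the contracted strata and forces the both-nonzero strata onto the Noether-Lefschetz loci. This requires a careful comparison of the reduced and ordinary obstruction theories along the boundary of $\MM_{0,4}(\pi_\Lambda,L)$ — in effect the genus-$0$ case of Conjectures \ref{conj1} and \ref{conj2} proven by hand — together with the identification of the multi-pointed reduced pushforward with products of $\mathcal{L}$, where the normalization by $N_0(L)$ and the vanishing of the lower-order corrections of Beauville-Voisin type must be checked.
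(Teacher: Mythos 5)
Your overall architecture parallels the paper's proof: apply the splitting axiom of Gromov--Witten theory to the two boundary divisors of the WDVV relation \eqref{wdvv}, identify the unsplit strata (one component contracted) with the four principal terms of $(\dag)$, and relegate the strata with both classes nonzero to the error terms. The unsplit analysis is correct in outline, modulo the multi-point push-forward formula
$\epsilon^n_*\left[\MM_{0,n}(\pi_\Lambda,L)\right]^{\textup{red}} = N_0(L)\cdot \mathcal{L}_{(1)}\cdots\mathcal{L}_{(n)}$,
which you rightly flag as unproven; in the paper this is Proposition \ref{zzzr}, and it rests on Theorem \ref{dldl} (the comparison of $\w{\mathcal{L}}$ and $\mathcal{L}$ against the reduced class), itself proven by a separate WDVV exportation over $\MM_{0,0}(\pi_\Lambda,L)$. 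That is a substantial ingredient, but you acknowledged it as an obstacle rather than claiming it.

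The genuine gap is your treatment of the split strata $L = L_1 + L_2$ with $L_1, L_2 \neq 0$. Your argument --- one factor carries the ordinary virtual class, which vanishes on each fiber, so the contribution vanishes over the generic point of $\mathcal{M}_\Lambda$ and is therefore ``supported over a proper closed substack, carried by Noether-Lefschetz divisors'' --- proves strictly less than the theorem asserts. A spreading-out/localization argument only shows the class is pushed forward from the preimage of \emph{some} proper closed substack; it gives no control over which substack, and no reason the resulting class is strict tautological. But the dots in $(\dag)$ must be \emph{strict tautological classes supported over Noether-Lefschetz divisors}. The paper's dichotomy is what delivers this: if the saturation $\ww{\Lambda}$ of the span of $L_1$, $L_2$, $\Lambda$ has $\text{rank}(\ww{\Lambda}) = \text{rank}(\Lambda)+1$, the stratum genuinely lives over the Noether-Lefschetz locus $\mathcal{M}_{\ww{\Lambda}}$ and \emph{both} vertices carry reduced classes (the obstruction calculation of \cite[Lemma 1]{MP}, contrary to your uniform ``reduction applies to only one side'' picture), so the contribution is an explicit tautological class pushed forward from $\mathcal{X}^4_{\ww{\Lambda}}$; if instead $\ww{\Lambda} = \Lambda$, the stratum dominates $\mathcal{M}_\Lambda$ and the contribution is $-\lambda$ times products of reduced push-forwards --- a class not visibly supported over anything proper. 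At this point the paper must invoke \cite[Theorem 1.2]{BKPSB}, that $\lambda$ is a combination of Noether-Lefschetz divisors, to place these terms in the dots. Your genericity argument cannot substitute for this input: without it, ``vanishes on the generic fiber'' yields only an uncontrolled support, and the stated conclusion of Theorem \ref{WDVV} does not follow.
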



Getzler \cite{getz} in 1997 discovered a beautiful relation  in the
cohomology of $\MM_{1,4}$ which was proven to hold in Chow in \cite{pan}:
\begin{multline} \label{getzler}
12\left[\ \begin{tikzpicture}[baseline={([yshift=-.5ex]current bounding box.center)}]
	\node[leg] (l3) at (0,3) {};
	\node[leg] (l4) at (1,3) {};
	\node[vertex] (v3) at (.5,2.5) [label=right:$0$] {};
	\node[vertex] (v2) at (.5,1.5) [label=right:$1$] {};
	\node[vertex] (v1) at (.5,.5) [label=right:$0$] {};
	\node[leg] (l1) at (0,0) {};
	\node[leg] (l2) at (1,0) {};
	\path
		(l3) edge (v3)
		(l4) edge (v3)
		(v3) edge (v2)
		(v2) edge (v1)
		(v1) edge (l1)
		(v1) edge (l2)
	;
\end{tikzpicture}\ \right] \ - \ 4\left[\ \begin{tikzpicture}[baseline={([yshift=-.5ex]current bounding box.center)}]
	\node[leg] (l3) at (0,3) {};
	\node[leg] (l4) at (1,3) {};
	\node[vertex] (v3) at (.5,2.5) [label=right:$0$] {};
	\node[leg] (l2) at (0,1.5) {};
	\node[vertex] (v2) at (.5,1.5) [label=right:$0$] {};
	\node[vertex] (v1) at (.5,.5) [label=right:$1$] {};
	\node[leg] (l1) at (0,0) {};
	\path
		(l3) edge (v3)
		(l4) edge (v3)
		(v3) edge (v2)
		(l2) edge (v2)
		(v2) edge (v1)
		(v1) edge (l1)
	;
\end{tikzpicture}\ \right] \ - \ 2\left[\ \begin{tikzpicture}[baseline={([yshift=-.3ex]current bounding box.center)}]
	\node[leg] (l3) at (0,2.5) {};
	\node[leg] (l4) at (1,2.5) {};
	\node[vertex] (v3) at (.5,2) [label=right:$0$] {};
	\node[leg] (l2) at (0,1.5) {};
	\node[leg] (l1) at (0,.5) {};
	\node[vertex] (v2) at (.5,1) [label=right:$0$] {};
	\node[vertex] (v1) at (.5,0) [label=right:$1$] {};
	\path
		(l3) edge (v3)
		(l4) edge (v3)
		(v3) edge (v2)
		(l2) edge (v2)
		(l1) edge (v2)
		(v2) edge (v1)
	;
\end{tikzpicture}\ \right] \ + \ 6\left[\ \begin{tikzpicture}[baseline={([yshift=-.3ex]current bounding box.center)}]
	\node[leg] (l2) at (0,2.5) {};
	\node[leg] (l3) at (.5,2.5) {};
	\node[leg] (l4) at (1,2.5) {};
	\node[vertex] (v3) at (.5,2) [label=right:$0$] {};
	\node[leg] (l1) at (0,1) {};
	\node[vertex] (v2) at (.5,1) [label=right:$0$] {};
	\node[vertex] (v1) at (.5,0) [label=right:$1$] {};
	\path
	    (l2) edge (v3)
		(l3) edge (v3)
		(l4) edge (v3)
		(v3) edge (v2)
		(l1) edge (v2)
		(v2) edge (v1)
	;
\end{tikzpicture}\ \right] \\[3pt]
+ \ \left[\begin{tikzpicture}[baseline={([yshift=-.3ex]current bounding box.center)}]
	\node[leg] (l2) at (0,1.5) {};
	\node[leg] (l3) at (.5,1.5) {};
	\node[leg] (l4) at (1,1.5) {};
	\node[vertex] (v2) at (.5,1) [label=right:$0$] {};
	\node[leg] (l1) at (0,.5) {};
	\node[vertex] (v1) at (.5,0) [label=right:$0$] {};
	\path
        (l2) edge (v2)
		(l3) edge (v2)
		(l4) edge (v2)
		(v2) edge (v1)
		(l1) edge (v1)
		(v1) edge[in=-135,out=-45,loop] (v1)
	;
\end{tikzpicture}\right] \ + \ \left[\begin{tikzpicture}[baseline={([yshift=-.3ex]current bounding box.center)}]
	\node[leg] (l1) at (0,1.5) {};
	\node[leg] (l2) at (.33,1.5) {};
	\node[leg] (l3) at (.67,1.5) {};
	\node[leg] (l4) at (1,1.5) {};
	\node[vertex] (v2) at (.5,1) [label=right:$0$] {};
	\node[vertex] (v1) at (.5,0) [label=right:$0$] {};
	\path
        (l1) edge (v2)
        (l2) edge (v2)
		(l3) edge (v2)
		(l4) edge (v2)
		(v2) edge (v1)
		(v1) edge[in=-135,out=-45,loop] (v1)
	;
\end{tikzpicture}\right] \ - \ 2\left[\ \begin{tikzpicture}[baseline={([yshift=-.5ex]current bounding box.center)}]
	\node[leg] (l3) at (0,2) {};
	\node[leg] (l4) at (1,2) {};
	\node[vertex] (v2) at (.5,1.5) [label=right:$0$] {};
	\node[vertex] (v1) at (.5,.5) [label=right:$0$] {};
	\node[leg] (l1) at (0,0) {};
	\node[leg] (l2) at (1,0) {};
	\path
		(l3) edge (v2)
		(l4) edge (v2)
		(v2) edge[bend left=60] (v1)
		(v2) edge[bend right=60] (v1)
		(v1) edge (l1)
		(v1) edge (l2)
	;
\end{tikzpicture}\ \right] \ = \ 0 \ \in \ \mathsf{A}^2(\MM_{1, 4}, \mathbb{Q})\, .
\end{multline}

\vspace{8pt}
\noindent Here, the strata are summed over all marking
distributions and are taken in the stack sense (following the conventions of \cite{getz}). 

\begin{theorem}\label{ggg} For admissible $L\in \Lambda$
satisfying the condition $\langle L, L\rangle_\Lambda \geq 0$,
exportation of Getzler's relation yields
\begin{multline} \tag{\ddag}
{\mathcal{L}}_{(1)}\Delta_{(12)}\Delta_{(34)} + {\mathcal{L}}_{(3)}\Delta_{(12)}\Delta_{(34)} + {\mathcal{L}}_{(1)}\Delta_{(13)}\Delta_{(24)} 
+ {\mathcal{L}}_{(2)}\Delta_{(13)}\Delta_{(24)} + {\mathcal{L}}_{(1)}\Delta_{(14)}\Delta_{(23)} \\ + {\mathcal{L}}_{(2)}\Delta_{(14)}\Delta_{(23)} 
- {\mathcal{L}}_{(1)}\Delta_{(234)} - {\mathcal{L}}_{(2)}\Delta_{(134)} - {\mathcal{L}}_{(3)}\Delta_{(124)} - {\mathcal{L}}_{(4)}\Delta_{(123)} \\
- {\mathcal{L}}_{(1)}\Delta_{(123)} - {\mathcal{L}}_{(1)}\Delta_{(124)}  - {\mathcal{L}}_{(1)}\Delta_{(134)} - {\mathcal{L}}_{(2)}\Delta_{(234)}
+\ldots
\, = \, 0 \ \in \ \mathsf{A}^5(\mathcal{X}_\Lambda^4, \mathbb{Q})\,, 
\end{multline}
where the dots stand for strict tautological classes supported over proper Noether-Lefschetz loci
of $\mathcal{M}_\Lambda$.
\end{theorem}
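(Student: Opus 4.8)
The plan is to apply the export construction of Section~\ref{conjs} to Getzler's relation. Since Getzler's relation vanishes in $\mathsf{A}^2(\MM_{1,4},\mathbb{Q})$, its pullback $\tau^*(\textup{Getzler})$ vanishes on $\MM_{1,4}(\pi_\Lambda,L)$, and therefore
\[
\epsilon^4_*\big(\tau^*(\textup{Getzler})\cap[\MM_{1,4}(\pi_\Lambda,L)]^{\textup{red}}\big)\,=\,0\ \in\ \mathsf{A}^5(\mathcal{X}^4_\Lambda,\mathbb{Q})\,.
\]
The content of the theorem is the \emph{explicit} evaluation of this identity: for each decorated boundary stratum $\Gamma$ appearing in Getzler's relation I would compute the pushforward $\epsilon^4_*\big(\tau^*[\Gamma]\cap[\MM_{1,4}(\pi_\Lambda,L)]^{\textup{red}}\big)$, and then show that the weighted sum, with Getzler's coefficients and over all marking distributions, equals the displayed combination modulo strict tautological classes supported over proper Noether--Lefschetz loci.

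The central tool is the splitting behavior of the reduced virtual class along the boundary of $\MM_{1,4}(\pi_\Lambda,L)$. For each dual graph $\Gamma$ the preimage $\tau^{-1}(\Gamma)$ is a fiber product over $\mathcal{X}_\Lambda$ of moduli spaces of stable maps attached to the vertices, with the class $L$ distributed as $L=\sum_v L_v$. The decisive input is that the \emph{ordinary} (non-reduced) virtual class of stable maps to a $K3$ surface in any nonzero class vanishes, so the reduction may be carried by only one vertex; every other vertex must carry class $0$ and hence be contracted, else the contribution vanishes. Thus for each $\Gamma$ the only surviving distributions place the entire class $L$ on a single vertex, equipped with its reduced class, while all remaining vertices are contracted. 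This single-reduced-vertex principle drastically cuts down the strata to analyze.

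I would then evaluate the surviving contributions vertex by vertex. A contracted genus-$0$ vertex carrying a marking subset $S$ forces $\mathrm{ev}_i=\mathrm{ev}_j$ for $i,j\in S$ and produces the relative diagonal; this yields the factors $\Delta_{(ij)}\Delta_{(kl)}$ and $\Delta_{(ijk)}$. The vertex carrying $L$, evaluated at its free markings, produces the divisor $\mathcal{L}_{(i)}$ after normalizing by $N_0(L)$ (nonzero by the Yau--Zaslow formula), directly from the definition of $\mathcal{L}$ when this vertex has genus $0$, and after a secondary reduction of its multi-pointed genus-$1$ reduced class to $\mathcal{L}$-divisors when it has genus $1$. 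Contracted genus-$1$ vertices contribute the Hodge class $\lambda=c_1(\mathbb{E})$ through the obstruction bundle $\mathbb{E}^\vee\otimes\mathcal{T}_{\pi_\Lambda}$; since $\lambda$ is supported on Noether--Lefschetz divisors, these terms, together with those arising from imprimitive subclasses and from the excess/higher reduced corrections, constitute precisely the dots supported over proper Noether--Lefschetz loci. Summing these contributions with Getzler's coefficients reproduces the symmetrized combination in $(\ddag)$.

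The main obstacle is the rigorous degeneration analysis of the reduced virtual class: establishing the single-reduced-vertex principle along the boundary (a ``by hand'' verification of the relevant part of Conjectures~\ref{conj1} and \ref{conj2}), and then computing the multi-pointed genus-$0$ and genus-$1$ reduced contributions precisely enough to isolate the leading diagonal terms, written explicitly, from the Noether--Lefschetz--supported corrections absorbed into the dots. The hypothesis $\langle L,L\rangle_\Lambda\geq 0$ enters exactly here: it guarantees that class-$L$ curves have arithmetic genus at least $1$, so that the genus-$1$ reduced geometry is nondegenerate and the reduction of the relevant genus-$1$ reduced classes to the divisors $\mathcal{L}_{(i)}$ holds in the required form.
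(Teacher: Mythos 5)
Your outline follows the paper's general strategy (pull back Getzler's relation, apply the splitting axiom, evaluate vertex contributions, absorb Noether--Lefschetz--supported terms into the dots), but the structural claim on which your whole evaluation rests --- the ``single-reduced-vertex principle'' --- is false in the relative setting, and this is exactly where the content of the theorem lies. The vanishing $[\MM_{g,n}(X,\beta)]^{\mathrm{vir}}=0$ holds for a \emph{fixed} $K3$ surface, but for the family $\pi_\Lambda$ the relative ordinary virtual class does not vanish: it satisfies $[\MM]^{\mathrm{vir}}=-\lambda\cdot[\MM]^{\mathrm{red}}$ (Section \ref{fafa2}). Consequently, distributions $L=L_1+L_2$ (and $L=L_1+L_2+L_3$) with all parts nonzero and admissible contribute nontrivially. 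The paper's analysis (Section \ref{gsp}) splits into two cases: if the saturation $\ww{\Lambda}$ of the span of the $L_i$ and $\Lambda$ has $\mathrm{rank}(\ww{\Lambda})=\mathrm{rank}(\Lambda)+1$, the contribution is pushed forward from $\mathcal{X}^4_{\ww{\Lambda}}$ and \emph{both} vertices carry reduced classes, by the obstruction calculation of \cite[Lemma 1]{MP}; if $\ww{\Lambda}=\Lambda$, the contribution carries a factor $-\lambda$ but is again nonzero (e.g.\ terms proportional to $N_0(L_1)N_0(L_2)\langle L_1,L_2\rangle_{\ww{\Lambda}}$ times products of the $\mathcal{L}_{i,(j)}$). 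These split contributions are precisely the bulk of the dots, and they are not optional bookkeeping: the paper's proof of Theorem \ref{dxxd} explicitly requires the entire formula, not just the principal terms, so an argument that (incorrectly) declares these terms to vanish both proves the wrong statement and destroys the later application.

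Two further points in your evaluation do not survive contact with the actual geometry. First, the unsplit genus~1 contributions are computed by Proposition \ref{g1g1}, whose answer is $N_1(L)\bigl(\mathcal{L}_{(1)}+\mathcal{L}_{(2)}+Z(L)\bigr)$ with an a priori unknown divisor $Z(L)$ pulled back from $\mathcal{M}_\Lambda$; likewise the self-loop stratum (Stratum 6) produces $\tfrac12 N_0(L)\,\kappa_{[L;1]}\,\mathcal{L}_{(1)}\mathcal{L}_{(2)}\mathcal{L}_{(3)}\mathcal{L}_{(4)}$. Neither $Z(L)$ nor $\kappa_{[L;1]}$ is known to be Noether--Lefschetz--supported at this stage --- that is proven only afterwards in Section \ref{dvdv} by feeding the \emph{full} exported relations back into themselves --- so your claim that the leftover terms are ``precisely the dots'' needs this bootstrapping, which your proposal does not supply. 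Second, the hypothesis $\langle L,L\rangle_\Lambda\geq 0$ is not about nondegeneracy of genus~1 geometry in your sense: it is used, via Oberdieck's Proposition \ref{trtrtr}, to guarantee $N_1(L)\neq 0$, since the stated relation $(\ddag)$ is obtained from the exported identity only after dividing the principal terms of Strata 1 and 2 by $12N_1(L)$.
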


The statements of Theorems \ref{WDVV} and \ref{ggg} contain only the {\it principal} terms of the
relation (not supported
over proper Noether-Lefschetz loci
of $\mathcal{M}_\Lambda$). We will write all the terms represented by the dots in Sections \ref{wwww} and \ref{gggg}.

The relation of Theorem \ref{WDVV} is obtained from the
export construction after 
dividing by the genus 0 reduced Gromov-Witten invariant
$N_0(L)$. The latter never vanishes for admissible classes.
Similarly, for Theorem \ref{ggg}, the export construction 
has been divided by the genus 1
reduced Gromov-Witten invariant
$$N_1(L) = \int_{[\MM_{1, 1}(X, L)]^{\text{red}}} \text{ev}^*(\mathsf{p}) \,,$$
where $\mathsf{p} \in H^4(X, \mathbb{Q})$ is the class of a point on $X$. 
By a result of Oberdieck discussed in Section \ref{gen1}, 
$N_1(L)$ does not vanish for admissible classes satisfying
$\langle L, L\rangle_\Lambda \geq 0$.


\subsection{Relations on $\mathcal{X}^3_\Lambda$}

As a Corollary of Getzler's relation, we have the following result. Let
$$\text{pr}_{(123)} : \mathcal{X}_\Lambda^4 \to \mathcal{X}_\Lambda^3$$
be the projection to the first 3 factors. 
Let $L=H$ and
consider the operation
$$\text{pr}_{(123)*} (\mathcal{H}_{(4)} \cdot -)$$
applied to the relation ($\ddag$). 
We obtain a universal decomposition of the diagonal $\Delta_{(123)}$ which generalizes the result of Beauville-Voisin \cite{BV} for a fixed $K3$ surface.

\begin{corollary} \label{bvdiag}
The $\pi^3_\Lambda$-relative diagonal $\Delta_{(123)}$ admits a decomposition with
principal terms
\begin{multline} \tag{$\ddag'$}
2\ell \cdot \Delta_{(123)} \, = \, \mathcal{H}_{(1)}^2 \Delta_{(23)} + \mathcal{H}_{(2)}^2 \Delta_{(13)} + \mathcal{H}_{(3)}^2 \Delta_{(12)} \\
- \mathcal{H}_{(1)}^2 \Delta_{(12)} - \mathcal{H}_{(1)}^2 \Delta_{(13)} - \mathcal{H}_{(2)}^2 \Delta_{(23)} 
+\ldots
\ \in \ \mathsf{A}^4(\mathcal{X}_\Lambda^3, \mathbb{Q})\,,
\end{multline}
where the dots stand for strict tautological classes supported over proper Noether-Lefschetz loci
of $\mathcal{M}_\Lambda$.
\end{corollary}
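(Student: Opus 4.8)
The plan is to apply the operation $\text{pr}_{(123)*}(\mathcal{H}_{(4)} \cdot -)$ directly to relation ($\ddag$) of Theorem \ref{ggg}, specialized to $L = H$. The specialization is legitimate: the quasi-polarization $H \in \Lambda$ is admissible and satisfies $\langle H, H\rangle_\Lambda = 2\ell > 0 \geq 0$, so the hypothesis of Theorem \ref{ggg} holds and ($\ddag$) is a genuine relation in $\mathsf{A}^5(\mathcal{X}^4_\Lambda, \mathbb{Q})$. Multiplying by $\mathcal{H}_{(4)}$ and pushing forward along the projection that forgets the fourth point produces a relation in $\mathsf{A}^4(\mathcal{X}^3_\Lambda, \mathbb{Q})$, the correct degree for $\Delta_{(123)}$. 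It then remains only to evaluate the push-forward of each principal term and to check that the error terms stay supported over proper Noether-Lefschetz loci.

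Two elementary evaluations, both instances of the projection formula, drive the computation. First, for any diagonal $\Delta_{(i4)}$ with $i \in \{1,2,3\}$, the restriction $\mathcal{H}_{(4)}|_{\Delta_{(i4)}} = \mathcal{H}_{(i)}$ together with the isomorphism $\Delta_{(i4)} \cong \mathcal{X}^3_\Lambda$ gives $\text{pr}_{(123)*}(\mathcal{H}_{(4)} \Delta_{(i4)} \cdot \beta) = \mathcal{H}_{(i)} \cdot \beta$ for $\beta$ pulled back from $\mathcal{X}^3_\Lambda$; applied twice, the same rule handles the triple diagonals $\Delta_{(ij4)}$. Second, when the fourth point is free we integrate over the $K3$ fiber: since $\int_X H = 0$ and $\int_X H^2 = 2\ell$, we obtain $\text{pr}_{(123)*}(\mathcal{H}_{(4)} \cdot \alpha) = 0$ and $\text{pr}_{(123)*}(\mathcal{H}_{(4)}^2 \cdot \alpha) = 2\ell \cdot \alpha$ for $\alpha$ pulled back from $\mathcal{X}^3_\Lambda$. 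The decisive term is $-\mathcal{H}_{(4)}\Delta_{(123)}$: multiplication by $\mathcal{H}_{(4)}$ turns it into $-\mathcal{H}_{(4)}^2\Delta_{(123)}$, whose push-forward is exactly $-2\ell\,\Delta_{(123)}$ and supplies the left-hand side of ($\ddag'$). Its companion $-\mathcal{H}_{(1)}\Delta_{(123)}$ vanishes because $\int_X H = 0$.

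Carrying out these evaluations on the remaining principal terms and collecting by diagonal, all mixed products $\mathcal{H}_{(i)}\mathcal{H}_{(j)}$ with $i \neq j$ cancel in pairs, leaving precisely $\mathcal{H}_{(3)}^2\Delta_{(12)} + \mathcal{H}_{(2)}^2\Delta_{(13)} + \mathcal{H}_{(1)}^2\Delta_{(23)} - \mathcal{H}_{(1)}^2\Delta_{(12)} - \mathcal{H}_{(1)}^2\Delta_{(13)} - \mathcal{H}_{(2)}^2\Delta_{(23)}$, which matches ($\ddag'$) once $-2\ell\,\Delta_{(123)}$ is moved to the left-hand side. The genuine obstacle is not this bookkeeping but the control of the dots: I must verify that the Noether-Lefschetz-supported error terms of ($\ddag$) remain Noether-Lefschetz-supported after multiplication by $\mathcal{H}_{(4)}$ and push-forward. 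This follows because $\mathcal{H}_{(4)}$ is itself a strict tautological class and $\text{pr}_{(123)*}$ preserves the strict tautological ring while carrying classes supported over a locus $\mathcal{M}_{\widetilde{\Lambda}}$ to classes supported over the same locus; the projection formula then guarantees that no principal (non-supported) contribution is created out of the error terms. Hence the output is again a strict tautological relation whose principal part is exactly ($\ddag'$).
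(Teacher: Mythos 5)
Your proposal is correct and follows exactly the paper's own route: the paper proves Corollary \ref{bvdiag} precisely by applying $\mathrm{pr}_{(123)*}(\mathcal{H}_{(4)}\cdot -)$ to the relation ($\ddag$) with $L=H$, which is admissible with $\langle H,H\rangle_\Lambda = 2\ell > 0$. Your term-by-term evaluation (the $-\mathcal{H}_{(4)}\Delta_{(123)}$ term producing $-2\ell\,\Delta_{(123)}$, the vanishing of $-\mathcal{H}_{(1)}\Delta_{(123)}$ under fiber integration, the pairwise cancellation of the mixed products, and the preservation of Noether--Lefschetz support under multiplication by $\mathcal{H}_{(4)}$ and push-forward) is the bookkeeping the paper leaves implicit, and it checks out.
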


The diagonal $\Delta_{(123)}$ controls the behavior of the $\kappa$ classes. For instance, we have
$$\kappa_{[a;b]} \, = \, \pi^3_*\left(\mathcal{H}_{(1)}^a \cdot \Delta_{(23)}^b \cdot \Delta_{(123)}\right) \ \in \ \mathsf{A}^{a + 2b - 2}(\mathcal{M}_{2\ell}, \mathbb{Q})\,.$$
The diagonal decomposition of Corollary \ref{bvdiag} plays a fundamental role in the proof of Theorem \ref{dxxd}.

\subsection{Cohomological results}
Bergeron and Li have an announced an
independent approach to the generation 
(in most codimensions)
of the
tautological ring $\mathsf{RH}^\star(\mathcal{M}_\Lambda)$ by 
Noether-Lefschetz loci in cohomology.
Petersen \cite{Pet} has proven the vanishing{\footnote{We use the complex grading
here.}}
$$\mathsf{RH}^{18}(\mathcal{M}_{2\ell})=
\mathsf{RH}^{19}(\mathcal{M}_{2\ell})=0\, .$$
We expect the above vanishing to hold also in Chow.

What happens in codimension 17 is a
very interesting question. By a result
of van der Geer and Katsura \cite{kvg},
$$\mathsf{RH}^{17}(\mathcal{M}_{2\ell})\neq 0\, .$$
We hope the stronger statement
\begin{equation} \label{hope5}
\mathsf{RH}^{17}(\mathcal{M}_{2\ell}) =\mathbb{Q}
\end{equation}
holds. If true, \eqref{hope5} would open
the door to a numerical theory of
proportionalities in the tautological ring.
The evidence for \eqref{hope5} is
rather limited at the moment. Careful
calculations in the $\ell=1$ and $2$
cases would be very helpful here. 

\subsection{Acknowledgments} 
We are grateful to  G.~Farkas, G.~van der Geer, D.~Huybrechts, Z.~Li, A.~Marian, D.~Maulik, G.~Oberdieck, D.~Oprea, D.~Petersen, and
J.~Shen for many discussions about
the moduli of $K3$ surfaces.
The paper was completed at the conference {\it Curves on
surfaces and threefolds} at the Bernoulli center in Lausanne 
in June~2016 attended
by both authors.

R.~P. was partially supported by 
SNF-200021143\-274, SNF-200020162928, ERC-2012-AdG-320368-MCSK, SwissMAP, and
the Einstein Stiftung. 
Q.~Y. was supported by the grant ERC-2012-AdG-320368-MCSK.

\section{$K3$ surfaces} \label{ooo}

\subsection{Reduced Gromov-Witten theory} \label{yzc}
Let $X$ be a nonsingular, projective $K3$ surface over $\com$, and let 
$$L \ \in \ \text{Pic}(X) \, =\, H^2(X,\mathbb{Z}) \cap H^{1,1}(X,\com)$$
be a nonzero effective class.
The moduli space ${\MM}_{g,n}(X,L)$
of genus $g$ stable maps with $n$ marked points
has expected dimension
$$\text{dim}^{\text{vir}}_\com\ {\MM}_{g,n}(X,\beta) 
= \int_L c_1(X) + (\text{dim}_\com(X) -3)(1-g) +n = g-1+n\,.$$
However, as the obstruction theory admits a 1-dimensional trivial quotient,
the virtual class $[{\MM}_{g,n}(X,L)]^{\text{vir}}$ vanishes.
The standard Gromov-Witten theory is trivial.

Curve counting on $K3$ surfaces
is captured instead by the {\it reduced} Gromov-Witten
theory constructed first via the twistor family in \cite{brl}.
An algebraic construction following~\cite{BF} is given in 
\cite{MP}. The reduced class 
$$\left[{\MM}_{g,n}(X,L)\right]^{\text{red}} \ \in \ \mathsf{A}_{g+n}({\MM}_{g,n}(X,L), \mathbb{Q})$$
has dimension $g+n$. 
The reduced Gromov-Witten integrals of $X$,
\begin{equation}\label{veq}
\Big\langle \tau_{a_1}(\gamma_1) \cdots \tau_{a_n}(\gamma_n) 
\Big\rangle_{g,L}^{X,\text{red}} \, = \, \int_{[{\MM}_{g,n}(X,L)]^{\text{red}}} \prod_{i=1}^n \text{ev}_i^*(\gamma_i)\cup \psi_i^{a_i}
\ \in \ \mathbb{Q}\,,
\end{equation}
are well-defined. 
Here, $\gamma_i \in H^\star(X,\mathbb{Q})$ and $\psi_i$ is the standard
descendent class at the $i^{\text{th}}$ marking.
Under deformations of $X$ for which $L$ remains a $(1,1)$-class,
the integrals \eqref{veq} are invariant.

\subsection{Curve classes on $K3$ surfaces}

Let $X$ be a nonsingular, projective $K3$ surface over $\mathbb{C}$.
The second cohomology of $X$ is a rank 22 lattice
with intersection form 
\begin{equation}\label{ccet}
H^2(X,\mathbb{Z}) \cong U\oplus U \oplus U \oplus E_8(-1) \oplus E_8(-1)\,,
\end{equation}
where
$$U
= \left( \begin{array}{cc}
0 & 1 \\
1 & 0 \end{array} \right)$$
and 
$$
E_8(-1)=  \left( \begin{array}{cccccccc}
 -2&    0 &  1 &   0 &   0 &   0 &   0 & 0\\
    0 &   -2 &   0 &  1 &   0 &   0 &   0 & 0\\
     1 &   0 &   -2 &  1 &   0 &   0 & 0 &  0\\
      0  & 1 &  1 &   -2 &  1 &   0 & 0 & 0\\
      0 &   0 &   0 &  1 &   -2 &  1 & 0&  0\\
      0 &   0&    0 &   0 &  1 &  -2 &  1 & 0\\ 
      0 &   0&    0 &   0 &   0 &  1 &  -2 & 1\\
      0 & 0  & 0 &  0 & 0 & 0 & 1& -2\end{array}\right)$$
is the (negative) Cartan matrix. The intersection form \eqref{ccet}
is even.

The {\it divisibility} $m(L)$ is
the largest positive integer which divides the lattice
element $L\in H^2(X,\mathbb{Z})$.
If the divisibility is 1,
$L$ is {\it primitive}.
Elements with
equal divisibility and norm square are equivalent up to orthogonal transformation 
of $H^2(X,\mathbb{Z})$, see \cite{CTC}.

\subsection{Lattice polarization} \label{lpol}
A primitive class 
$H\in \text{Pic}(X)$ is a {\it quasi-polarization}
if
$$\langle H,H \rangle_X >0  \ \ \ \text{and} \ \  \ \langle H,[C]\rangle_X
 \geq 0 $$
for every curve $C\subset X$.
A sufficiently high tensor power $H^n$
of a quasi-polarization is base point free and determines
a birational morphism
$$X\rightarrow \widetilde{X}$$
contracting A-D-E configurations of $(-2)$-curves on $X$.
Therefore, every quasi-polarized $K3$ surface is algebraic.

Let $\Lambda$ be a fixed rank $r$  
primitive{\footnote{A sublattice
is primitive if the quotient is torsion free.}}
sublattice
\begin{equation*} 
\Lambda \subset U\oplus U \oplus U \oplus E_8(-1) \oplus E_8(-1)
\end{equation*}
with signature $(1,r-1)$, and
let 
$v_1,\ldots, v_r \in \Lambda$ be an integral basis.
The discriminant is
$$\Delta(\Lambda) = (-1)^{r-1} \det
\begin{pmatrix}
\langle v_{1},v_{1}\rangle & \cdots & \langle v_{1},v_{r}\rangle  \\
\vdots & \ddots & \vdots \\
\langle v_{r},v_{1}\rangle & \cdots & \langle v_{r},v_{r}\rangle 
\end{pmatrix}\,.$$
The sign is chosen so $\Delta(\Lambda)>0$.

A {\it $\Lambda$-polarization} of a $K3$ surface $X$   
is a primitive embedding
$$j: \Lambda \hookrightarrow \mathrm{Pic}(X)$$ 
satisfying two properties:
\begin{enumerate}
\item[(i)] the lattice pairs 
$\Lambda \subset U^3\oplus E_8(-1)^2$ and
$\Lambda\subset 
H^2(X,\mathbb{Z})$ are isomorphic
via an isometry which restricts to the identity on $\Lambda$,
\item[(ii)]
$\text{Im}(j)$ contains
a {quasi-polarization}. 
\end{enumerate}
By (ii), every $\Lambda$-polarized $K3$ surface is algebraic.

The period domain $M$ of Hodge structures of type $(1,20,1)$ on the lattice 
$U^3 \oplus E_8(-1)^2$ is
an analytic open subset
of the 20-dimensional nonsingular isotropic 
quadric $Q$,
$$M\subset Q\subset \proj\big( (U^3 \oplus E_8(-1)^2 )    
\otimes_\Z \com\big)\,.$$
Let $M_\Lambda\subset M$ be the locus of vectors orthogonal to 
the entire sublattice $\Lambda \subset U^3 \oplus E_8(-1)^2$.

Let $\Gamma$ be the isometry group of the lattice 
$U^3 \oplus E_8(-1)^2$, and let
$$\Gamma_\Lambda \subset \Gamma$$ be the
subgroup  restricting to the identity on $\Lambda$.
By global Torelli,
the moduli space~$\mathcal{M}_{\Lambda}$ of $\Lambda$-polarized $K3$ surfaces 
is the quotient
$$\mathcal{M}_\Lambda = M_\Lambda/\Gamma_\Lambda\,.$$
We refer the reader to \cite{dolga} for a detailed
discussion.


\subsection{Genus $0$ invariants}\label{gen0}

Let 
$L\in \text{Pic}(X)$ be a nonzero and {\it admissible} class
on a $K3$ surface $X$ as defined in Section \ref{hah2}:
\begin{enumerate}
\item[(i)] $\frac{1}{m(L)^2}\cdot \langle L,L\rangle_X \geq -2$,
\item[(ii)] $\langle H, L\rangle_X \geq 0$.
\end{enumerate}
In case of equalities in both (i) and (ii), we further require $L$ to be effective.

\begin{proposition} \label{trtr} The reduced genus $0$ Gromov-Witten invariant
$$N_0(L) = \int_{[\MM_{0,0}(X,L)]^{\textup{red}}} 1$$
is nonzero for all admissible classes $L$.
\end{proposition}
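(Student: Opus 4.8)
The plan is to prove the stronger statement that $N_0(L)>0$ for every admissible $L$, so that nonvanishing follows immediately. First I would observe that admissibility forces $L$ to be effective. Writing $m=m(L)$ and $\widetilde L = L/m$, condition (i) gives $\langle\widetilde L,\widetilde L\rangle_X\ge -2$, whence $\chi(\widetilde L)=2+\tfrac12\langle\widetilde L,\widetilde L\rangle_X\ge 1$, and Riemann--Roch on $X$ shows that $\widetilde L$ or $-\widetilde L$ is effective; condition (ii) (with the effectivity clause (ii') in the boundary case) selects $\widetilde L$, so $L=m\widetilde L$ is effective. Next, by the deformation invariance of the reduced integrals recorded in Section~\ref{yzc}, together with the classification quoted in Section~\ref{ooo} (classes of equal divisibility and norm square lie in a single orbit of the orthogonal group of $H^2(X,\mathbb Z)$), the invariant $N_0(L)$ depends only on the pair $(m,\langle L,L\rangle_X)$. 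This reduces the statement to the positivity of a single function of the divisibility and the self-intersection.

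For primitive $L$ (that is, $m=1$), the value is read off from the Yau--Zaslow series of the introduction. Multiplying that series by $q$ gives
$$\sum_{h\ge 0} N_0(h-1)\, q^{h} \,=\, \prod_{n\ge 1}(1-q^n)^{-24} \,=\, \sum_{h\ge 0} c(h)\, q^{h}\,, \qquad \langle L,L\rangle_X = 2h-2\,.$$
Each $c(h)$ counts $24$-colored partitions of $h$, so $c(h)>0$ for all $h\ge 0$, and condition (i) guarantees $h\ge 0$. Hence $N_0(L)=c(h)>0$ in the primitive case.

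For imprimitive $L$, I would invoke the full Yau--Zaslow formula of \cite{KMPS}, which expresses $N_0(L)$ through the genus $0$ BPS (Gopakumar--Vafa) invariants $r_0$ of the sub-divisor classes $L/k$ via the genus $0$ multiple-cover structure
$$N_0(L) \,=\, \sum_{k\,\mid\, m} \frac{1}{k^{3}}\, r_0(L/k)\,, \qquad \sum_{h\ge 0} r_0(h)\, q^{h} \,=\, \prod_{n\ge 1}(1-q^n)^{-24}\,,$$
where $r_0(L/k)=r_0(h_k)$ depends only on $\langle L/k,L/k\rangle_X=\tfrac{1}{k^2}\langle L,L\rangle_X=2h_k-2$. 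The constraint $k\mid m$ makes $L/k$ integral, so each $h_k$ is an integer. The key point is positivity. Admissibility (i) says precisely that $\widetilde h\ge 0$, where $2\widetilde h-2=\langle\widetilde L,\widetilde L\rangle_X$, so the leading term $k=m$ equals $m^{-3}\,r_0(\widetilde h)=m^{-3}\,c(\widetilde h)>0$. Every other term $k^{-3}r_0(h_k)$ is nonnegative, since $r_0(h)=c(h)\ge 0$ for $h\ge 0$ and $r_0(h)=0$ for $h<0$. Summing, $N_0(L)\ge m^{-3}\,c(\widetilde h)>0$.

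The genuine content of the argument is carried entirely by the theorem of \cite{KMPS}: the multiple-cover expansion and the divisibility independence of $r_0$ are exactly the imprimitive Yau--Zaslow statement, and this is the step I would treat as a black box rather than reprove. Granting it, the only conceivable difficulty --- cancellation among the summands --- does not arise, because every summand is nonnegative and the leading one is strictly positive exactly when condition (i) holds. Thus I expect the main obstacle to be locating and citing the correct form of the imprimitive formula and checking that admissibility places the primitive self-intersection index $\widetilde h$ in the range $\widetilde h\ge 0$ where the colored-partition coefficients are positive; the remaining positivity bookkeeping is routine.
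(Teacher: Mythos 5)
Your proposal is correct and follows essentially the same route as the paper: both treat the full Yau--Zaslow formula of \cite{KMPS} as a black box, write $N_0(L)$ as a multiple-cover sum $\sum_{k \mid m} k^{-3} N_0\bigl(\langle L,L\rangle_X/2k^2\bigr)$, and conclude positivity from the positivity of the coefficients of $\prod_{n\ge 1}(1-q^n)^{-24}$ together with the vanishing convention below the admissible range. Your extra preliminaries (Riemann--Roch effectivity, orbit classification) and the explicit isolation of the leading term $k=m(L)$ are just a more detailed spelling-out of the bookkeeping the paper compresses into one sentence.
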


\begin{proof} The result is a direct consequence of the full Yau-Zaslow formula (including
multiple classes) proven in \cite{KMPS}.
We define $N_0(\ell)$ for $\ell\geq-1$ by
$$\sum_{\ell=-1}^\infty q^\ell N_0(\ell)\, = \, \frac{1}{q\prod_{n=1}^\infty (1-q^n)^{24}}\, =\, 
\frac{1}{q}+ 24 + 324 q + 3200 q^2 \ldots\, .$$
For $\ell<-1$, we set $N_0(\ell)=0$. 
By the full Yau-Zaslow formula,
\begin{equation} \label{pqq2}
N_0(L)
= \sum_{r|m(L)} \frac{1}{r^3} 
N_0\left(\frac{ \langle L,L\rangle_X}{2r^2} \right)\, .
\end{equation}
Since all $N_0(\ell)$ for $\ell\geq -1$ are positive, the right side of \eqref{pqq2} is positive.
\end{proof}

\subsection{Genus $1$ invariants} \label{gen1}
Let 
$L\in \text{Pic}(X)$ be an {admissible} class
on a $K3$ surface $X$.
Let $$N_1(L) = \int_{[\MM_{1, 1}(X, L)]^{\text{red}}} \text{ev}^*(\mathsf{p})$$
be the reduced invariant virtually counting elliptic curves
passing through a point of $X$.
We define 
$$\sum_{\ell=0}^\infty q^\ell N_1(\ell) \, = \,  
\frac{
\sum_{k=1}^\infty \sum_{d|k} dk q^k
}{q\prod_{n=1}^\infty (1-q^n)^{24}}\,
\, =\, 1 + 30 q + 480 q^2 + 5460 q^3 \ldots\, .$$
For $\ell\leq -1$, we set $N_1(\ell)=0$.
If $L$ is primitive, 
$$N_1(L)=N_1\left(\frac{\langle L,L\rangle_X}{2}\right)$$
by a result of 
\cite{brl}. In particular,
$N_1(L)>0$ for $L$ admissible and primitive if $\langle L, L\rangle_X\geq 0$.

\begin{proposition}[Oberdieck] \label{trtrtr} The reduced genus $1$ Gromov-Witten invariant
$N_1(L)$
is nonzero for all admissible classes $L$ satisfying
$\langle L, L\rangle_X\geq 0$.
\end{proposition}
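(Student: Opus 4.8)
The primitive case is already settled: by the result of \cite{brl} quoted above, $N_1(L) = N_1\left(\langle L,L\rangle_X/2\right)$ for primitive $L$, and the coefficients $N_1(\ell) = 1, 30, 480, 5460, \ldots$ are strictly positive for all $\ell \geq 0$. The plan is to reduce the general admissible case to this one by a multiple cover formula for the reduced genus $1$ theory, exactly parallel to the use of the full Yau-Zaslow formula in Proposition \ref{trtr}. Writing $L = m(L)\cdot\widetilde{L}$ with $\widetilde{L}$ primitive, I would express $N_1(L)$ through the primitive invariants $N_1$ and $N_0$ evaluated at the arguments $\frac{\langle L,L\rangle_X}{2r^2}$ for $r \mid m(L)$.

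The condition $\langle L,L\rangle_X \geq 0$ enters precisely here. For $r \mid m(L)$ one has
$$\frac{\langle L,L\rangle_X}{2r^2} \, = \, \Big(\frac{m(L)}{r}\Big)^2\cdot\frac{\langle \widetilde{L},\widetilde{L}\rangle_X}{2} \, \geq \, 0\,,$$
so every argument appearing in the formula is a non-negative integer, and hence every primitive invariant $N_1(\cdots)$ and $N_0(\cdots)$ occurring is strictly positive by \cite{brl} and Proposition \ref{trtr}. This also explains why the hypothesis $\langle L,L\rangle_X \geq 0$ cannot simply be dropped: for a class with $\frac{1}{m(L)^2}\langle L,L\rangle_X = -2$ the smallest argument is $-1$, where the primitive genus $1$ invariant vanishes, and the genus $1$ count can genuinely be zero. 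This is in sharp contrast to genus $0$, where $N_0(-1) = 1 \neq 0$ keeps Proposition \ref{trtr} valid with no extra hypothesis at all.

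With all primitive inputs positive, it remains to control the coefficients of the multiple cover formula, and this is the main obstacle. In genus $0$ the coefficients are simply $r^{-3} > 0$, so positivity is automatic; in genus $1$ the reduced theory receives contributions from both genus $1$ stable maps and (nodal) genus $0$ stable maps, and the associated BPS counts are \emph{not} all positive. Thus the step that must be carried out by hand is to show that, after re-expressing everything in terms of the Gromov-Witten primitive invariants $N_1$ and $N_0$ (which are positive), the total is strictly positive. I expect this to follow either from verifying that each coefficient is non-negative, or, more robustly, from a domination estimate: the $r = 1$ term carries the primitive invariant at the largest argument $\langle L,L\rangle_X/2$, whose rapid growth outweighs the finitely many remaining terms. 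Establishing this sign control is the genuine content of Oberdieck's result; once it is in place, $N_1(L) > 0$ for every admissible $L$ with $\langle L,L\rangle_X \geq 0$ follows at once.
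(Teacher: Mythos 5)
Your overall strategy coincides with the paper's: reduce to the primitive case (settled by \cite{brl}) via a multiple cover formula, and observe that the hypothesis $\langle L,L\rangle_X \geq 0$ guarantees that every argument $\langle L,L\rangle_X/2r^2$ is nonnegative, so every primitive invariant appearing is positive. The gap is that you stop exactly where the real work begins. You do not produce the multiple cover formula, and you frame the needed ``sign control'' as something to be established later by one of two speculative routes (coefficient-by-coefficient non-negativity of a BPS-type expansion, or a growth/domination estimate). Neither route is carried out, and neither is in fact what makes the argument work. The formula, conjectured in \cite{OP}, is
$$N_1(L) \,=\, \sum_{r\mid m(L)} r\, N_1\!\left(\frac{\langle L,L\rangle_X}{2r^2}\right),$$
with coefficients $r$ that are manifestly positive --- so once the formula is available there is no sign issue at all; the entire content of the proposition is proving this formula in the required genus $1$ case.

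The paper's proof of the formula is short but rests on two nontrivial external inputs you never invoke. Pixton's relation (obtained from Mumford's boundary expression for $\lambda_2$ on $\MM_{2}$, see the appendix of \cite{MPT}) gives
$$N_2(L) \,=\, \frac{1}{10}N_1(L) + \frac{\langle L,L\rangle_X^2}{960}\, N_0(L)\,, \qquad N_2(L) \,=\, \int_{[\MM_{2}(X,L)]^{\text{red}}} \lambda_2\,.$$
By the genus $2$ Katz-Klemm-Vafa formula for imprimitive classes proven in \cite{PT}, $N_2(L)$ obeys a multiple cover rule with factor $r$; by the full Yau-Zaslow formula \cite{KMPS}, the term $\frac{\langle L,L\rangle_X^2}{960}N_0(L)$ also carries the factor $(r^2)^2\cdot r^{-3} = r$, the $(r^2)^2$ coming from rescaling $\langle L,L\rangle_X^2$ to the square of the corresponding primitive class. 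Subtracting, $N_1(L)$ is forced to carry the factor $r$ as well, which is exactly the displayed formula, and positivity follows at once. To repair your proposal you would need to replace your two fallback suggestions with this derivation (or some other proof of the genus $1$ multiple cover formula); as written, you have correctly identified the statement that needs to be proven --- and correctly located where the hypothesis $\langle L,L\rangle_X\geq 0$ enters --- but you have not proven it.
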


\begin{proof}
The result is a direct consequence of the multiple cover
formula for the reduced Gromov-Witten theory of
$K3$ surfaces conjectured in \cite{OP}.
By the multiple cover formula,
\begin{equation} \label{pqq3}
N_1(L) = 
\sum_{r|m(L)} r 
N_1\left(\frac{ \langle L,L\rangle_X}{2r^2} \right)\, .
\end{equation}
Since all $N_1(\ell)$ for $\ell\geq 0$ are positive, the right side of \eqref{pqq3} is positive.

To complete the argument, we must prove the multiple cover
formula \eqref{pqq3} in the required genus 1 case. 
We derive \eqref{pqq3} from  the genus 2 case of the
Katz-Klemm-Vafa formula for imprimitive classes proven in \cite{PT}.
Let
$$N_2(L)= \int_{[\MM_{2}(X, L)]^{\text{red}}} \lambda_2\, ,$$
where $\lambda_2$ is the pull-back of the second
Chern class of the Hodge bundle on $\MM_{2}$.
Using the well-known boundary expression{\footnote{See \cite{Mumford}.
A more recent approach valid also for higher genus can be found in
\cite{DRcycles}.}} for $\lambda_2$
in the tautological ring of $\MM_{2}$, Pixton \cite[Appendix]{MPT}
proves 
\begin{equation}\label{gww}
N_2(L)= \frac{1}{10} N_1(L) + \frac{\langle L,L\rangle^2_X}{960} N_0(L)\,.
\end{equation}
By \cite{PT}, the multiple cover formula for $N_2(L)$
carries a factor of $r$.  By the Yau-Zaslow formula
for imprimitive classes \cite{KMPS}, the term $\frac{\langle L,L\rangle^2_X}{960} N_0(L)$ also carries a factor of 
$$(r^2)^2\cdot \frac{1}{r^3} = r\,. $$
By \eqref{gww}, $N_1(L)$ must then  carry a factor
of $r$ in the multiple cover formula exactly
as claimed in \eqref{pqq3}.
\end{proof}

\subsection{Vanishing}

Let 
$L\in \text{Pic}(X)$ be an {inadmissible} class
on a $K3$ surface $X$. The following vanishing result holds.

\begin{proposition} \label{vvvv} For inadmissible $L$, the reduced virtual 
class 
is $0$ in Chow,
$$\left[\MM_{g,n}(X,L)\right]^{\textup{red}} \,=\, 0 \ \in\  \mathsf{A}_{g+n}(\MM_{g,n}(X,L),\mathbb{Q})\,.$$
\end{proposition}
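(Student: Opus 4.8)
The plan is to split into cases according to which of the admissibility conditions (i), (ii), (ii') fails, dispose of the failures of (ii) and (ii') by a direct emptiness argument, and attack the failure of (i) through deformation invariance of the reduced virtual class. First I would treat the failures of (ii) and (ii'). If $\langle H, L\rangle_X < 0$, then since $H$ is a quasi-polarization and hence nef, and every effective curve pairs non-negatively with $H$, the class $L$ cannot be effective; but the image of any stable map is an effective $1$-cycle in class $L$, so $\MM_{g,n}(X,L)=\emptyset$ and the reduced class vanishes trivially. The failure of (ii') is the case $\langle H, L\rangle_X = 0$ with $L$ not effective, for which again $\MM_{g,n}(X,L)=\emptyset$.

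The substantive case is the failure of (i), namely $\frac{1}{m(L)^2}\langle L,L\rangle_X \leq -4$ with $L = m(L)\,\widetilde{L}$ and $\widetilde{L}$ primitive. Here the moduli space need not be empty: for two $(-2)$-curves $C_1,C_2$ meeting sufficiently, a primitive class such as $\widetilde{L} = 4C_1 + C_2$ has square $\leq -4$, is carried by a connected (non-reduced) curve, and therefore supports stable maps. Thus genuine vanishing of the \emph{class}, not merely emptiness, must be established. I would use deformation invariance of the reduced virtual class: embed $X$ in the family $\mathcal{X}\to B$ of $\langle H,\widetilde{L}\rangle$-polarized $K3$ surfaces, a smooth base over which $L$ remains of type $(1,1)$, so that the relative moduli space $\MM_{g,n}(\mathcal{X}/B,L)$ carries a relative reduced virtual class of dimension $\dim B + (g+n)$ restricting, by base change over each $b\in B$, to $[\MM_{g,n}(X_b,L)]^{\textup{red}}$. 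On a very general member the Picard lattice is exactly $\langle H,\widetilde{L}\rangle$ of signature $(1,1)$, whose root system is extremely restricted; for most such lattices $\widetilde{L}$ of square $\leq -4$ is not a non-negative combination of the effective $(-2)$-classes, hence not effective, so the generic fiber is empty and the relative class is supported over a proper Noether–Lefschetz sublocus $Z\subsetneq B$. When the fibers of the relative moduli space over $Z$ have the expected dimension $g+n$, the conclusion follows by a dimension count: the support has dimension $\dim Z + (g+n) = \dim B - \operatorname{codim}(Z) + (g+n) < \dim B + (g+n)$, so the relative reduced class lies in a Chow group above the dimension of its support and vanishes, whence $[\MM_{g,n}(X,L)]^{\textup{red}}=0$ by base-change compatibility.

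The main obstacle is precisely the failure of this dimension count in the excess-dimensional and genuinely-effective cases: when $\widetilde{L}$ remains effective even on the very general member, and when maps to rigid rational curves produce components of $\MM_{g,n}(X_z,L)$ of dimension exceeding $g+n$ for large $g$. To handle these uniformly I would abandon the naive count in favor of Kiem–Li cosection localization, using the cosection of the obstruction sheaf induced by the holomorphic symplectic form $\sigma_X$. The crux is to identify the degeneracy locus of the relevant cosection on the reduced obstruction theory with the failure of positivity of $L$, and to show that for inadmissible $L$ this locus is empty over the generic member, so that the localized reduced class vanishes compatibly in families and restricts to $0$ on the fiber $\MM_{g,n}(X,L)$. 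Verifying this identification — i.e.\ that the negativity forced by inadmissibility makes the symplectic cosection nowhere-degenerate on the reduced theory — is the step I expect to require the most care.
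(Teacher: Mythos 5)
Your disposal of the failures of (ii) and (ii$'$) by emptiness is correct: a stable map in a nonzero class produces an effective $1$-cycle, so non-effective classes have empty moduli spaces. The genuine gap is in the case $\langle \widetilde{L},\widetilde{L}\rangle_X \le -4$, and it sits exactly where you flagged it. First, your deformation keeps $H$ algebraic (you work inside the $\langle H,\widetilde{L}\rangle$-polarized family), and this is the root of the trouble: for inadmissible classes supported on A-D-E configurations of $(-2)$-curves, $L$ can remain effective on the very general $\Lambda$-polarized surface, so the relative moduli space need not be supported over a proper Noether--Lefschetz locus at all. Your phrase ``for most such lattices'' cannot be upgraded to ``for all,'' and the excluded lattices are not corner cases --- they are the heart of the proposition. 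Second, even when generic emptiness does hold, support of the relative class over a proper closed $Z \subsetneq B$ with $\dim B = 18$ does not imply that its refined Gysin restriction to a point $b \in Z$ vanishes (this implication is simply false over higher-dimensional bases); your dimension count requires the total space of the moduli over $Z$ to have dimension less than $\dim B + g + n$, which fails whenever the fibers have excess dimension, e.g.\ for multiple covers of rigid rational curves. So the argument is incomplete precisely in the cases that matter, and you say so yourself.

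The cosection fallback will not close this gap. The cosection induced by the holomorphic symplectic form is exactly what is quotiented out to define the reduced obstruction theory, and since $h^{2,0}(X)=1$ there is no residual canonical cosection on the reduced theory; moreover, for an effective inadmissible class the degeneracy locus of any putative cosection over the special fiber cannot be empty, because stable maps exist on $X$ itself and are locally indistinguishable from maps in admissible $(-2)$-classes, whose reduced classes are nonzero (the reduced genus $0$ invariant of a $(-2)$-curve class equals $1$). Inadmissibility is lattice-theoretic and only becomes geometric after deforming $X$ --- no argument confined to the fixed surface, or localized on the fixed moduli space, can detect it. This is precisely how the paper proceeds: it abandons $H$ and chooses a $1$-parameter family $\pi_C \colon \mathcal{X} \to (C,0)$ in which only $L$ stays algebraic, using the elliptically fibered argument of \cite[Lemma 2]{MP} (the general fiber may then be non-algebraic, a point the paper treats separately via an Artinian neighborhood of $0 \in C$), so that $L$ is not effective on the general fiber and the moduli space does not dominate $C$. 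Then $\left[\MM_{g,n}(X,L)\right]^{\textup{red}} = \iota^! \left[\MM_{g,n}(\pi_C,L)\right]^{\textup{red}}$, and the decisive Chow-theoretic fact is that over a \emph{curve} base, $\iota^!$ annihilates every cycle that fails to dominate $C$, regardless of its dimension: such a cycle maps to a single point of $C$, so the pulled-back line bundle $\mathcal{O}_C(0)$ is trivial on it. This one fact replaces both your dimension count and your cosection, and is the reason excess-dimensional moduli cause no difficulty in the paper's proof.
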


\begin{proof}
Consider a 1-parameter family of $K3$ surfaces
\begin{equation}\label{q22q}
\pi_C:\mathcal{X} \rightarrow (C,0)
\end{equation}
with special fiber $\pi^{-1}(0)=X$ for which the class
$L$ is algebraic on all fibers. Let
\begin{equation}\label{trrtt} \phi:
\MM_{g,n}(\pi_C, L) \rightarrow C
\end{equation}
be the universal moduli space of stable maps to the fibers of $\pi_C$.
Let
$$\iota: 0 \hookrightarrow C$$
be the inclusion of the special point.
By the construction of the reduced class,
$$[\MM_{g,n}(X, L)]^{\text{red}} = \iota^! 
[\MM_{g,n}(\pi_C, L)]^{\text{red}}\, .$$

Using the argument of \cite[Lemma 2]{MP} for
elliptically fibered $K3$ surfaces with a section, such a family 
\eqref{q22q} can be found for which
the fiber of $\phi$ is {\it empty} over a general point
of~$C$ since $L$ is not generically
effective. The vanishing
\begin{equation}\label{dkkd}
\left[\MM_{g,n}(X,L)\right]^{\textup{red}} \,=\, 0 \ \in\  \mathsf{A}_{g+n}(\MM_{g,n}(X,L),\mathbb{Q})\,
\end{equation}
then follows: $\iota^!$ of {\it any} cycle which does not dominate $C$ is 0.

If the family \eqref{q22q} consists of projective $K3$ surfaces, the argument
stays within the Gromov-Witten theory of algebraic varieties.
However, if the family consists of non-algebraic $K3$ surfaces (as may be the case since
$L$ is not ample), a few more steps are needed.
First, we can assume {\it all} stable maps to the fiber of the family $\eqref{q22q}$
lie over $0\in C$ and map to the algebraic fiber $X$.
There is no difficulty in constructing the moduli space of stable
maps \eqref{trrtt}. In fact, all the geometry takes place over
an Artinian neighborhood of $0\in C$. Therefore the cones and intersection
theory are all algebraic. We conclude the vanishing \eqref{dkkd}.
\end{proof}

\section{Gromov-Witten theory for families of $K3$ surfaces} \label{zzss}
\subsection{The divisor $\mathcal{L}$} \label{fafa2}
Let $\mathcal{B}$ be any nonsingular base scheme, and let 
$$\pi_{\mathcal{B}}: \mathcal{X}_{\mathcal{B}} \rightarrow \mathcal{B}$$
be a family of $\Lambda$-polarized $K3$ surfaces.{\footnote{Since the 
quasi-polarization class may not be ample, $\mathcal{X}_{\mathcal{B}}$ may be a
nonsingular algebraic space. There is no difficulty in defining the
moduli space of stable maps and the associated virtual classes
for such nonsingular algebraic spaces. Since the stable maps are to the
fiber classes, the moduli spaces are of finite type.
In the original paper on virtual fundamental classes by Behrend and Fantechi \cite{BF},
the obstruction theory on the moduli space of stable maps was required
to have a global resolution (usually obtained from an ample 
bundle on the target). However, the global resolution hypothesis
was removed by Kresch in \cite[Theorem 5.2.1]{kresch}.
}}
For $L\in \Lambda$ admissible, consider the moduli space
\begin{equation}\label{eee}
\MM_{g,n}(\pi_{\mathcal{B}},L) \rightarrow \mathcal{B}\, .
\end{equation}
The relationship between the $\pi_{\mathcal{B}}$-relative standard and reduced
obstruction theory of 
$\MM_{g,n}(\pi_{\mathcal{B}},L)$ yields 
$$\left[\MM_{g,n}(\pi_{\mathcal{B}},L)\right]^{\text{vir}}= -\lambda \cdot \left[\MM_{g,n}(\pi_{\mathcal{B}},L)\right]^{\text{red}}\, $$
where $\lambda$ is the pull-back via \eqref{eee} of the Hodge bundle on $\mathcal{B}$.
The reduced class is of $\pi_{\mathcal{B}}$-relative dimension $g+n$.

The canonical divisor class associated to an admissible $L\in \Lambda$ is 
$$\mathcal{L} \, = \, \frac{1}{N_0(L)} \,\cdot \, 
\epsilon_*\left[ \MM_{0,1}(\pi_{\mathcal{B}},L)\right]^{\text{red}} \ \in \ \mathsf{A}^1(\mathcal{X}_{\mathcal{B}},\mathbb{Q}) \, .$$
By Proposition \ref{trtr},
the reduced Gromov-Witten invariant
$$N_0(L) = \int_{[\MM_{0,0}(X,L)]^{\text{red}}} 1$$
is not zero.

For a family of $\Lambda$-polarized $K3$ surfaces
over any base scheme $\mathcal{B}$, we define
$$\mathcal{L} \ \in \ \mathsf{A}^1(\mathcal{X}_{\mathcal{B}},\mathbb{Q})$$
by pull-back from the universal family over the nonsingular
moduli stack $\mathcal{M}_\Lambda$.

\subsection{The divisor $\w{\mathcal{L}}$} \label{gwgw}
Let $\mathcal{X}_\Lambda$ denote the
universal $\Lambda$-polarized $K3$ surface over $\mathcal{M}_{\Lambda}$,
$$\pi_\Lambda: \mathcal{X}_\Lambda \rightarrow \mathcal{M}_{\Lambda}\, .$$
For $L\in \Lambda$  admissible, 
Let
$\MM_{0,0}(\pi_\Lambda, L)$
be the $\pi_\Lambda$-relative moduli space of genus 0 stable maps.
Let $$\phi: \MM_{0,0}(\pi_\Lambda, L) \rightarrow \mathcal{M}_\Lambda$$
be the proper structure map.
The reduced virtual class
$\left[\MM_{0,0}(\pi_\Lambda, L)\right]^{\text{red}}$
is of $\phi$-relative dimension 0 and
satisfies
$$\phi_*\left[\MM_{0,0}(\pi_\Lambda, L)\right]^{\text{red}} \, =\,  N_0(L) \cdot [\mathcal{M}_\Lambda]\, \neq\, 0 \,.$$

The universal curve over the moduli space of stable maps,
$$\mathsf{C} \rightarrow
\MM_{0,0}(\pi_\Lambda, L) \,,$$
carries an evaluation morphism
$$\epsilon_{\MM}: \mathsf{C} \rightarrow \mathcal{X}_\MM \,=\, \phi^*\mathcal{X}_\Lambda$$
over $\mathcal{M}_{\Lambda}$. Via the Hilbert-Chow map, the image of $\epsilon_{\MM}$ determines
a canonical Chow cohomology class
$$\widehat{\mathcal{L}} \ \in \ \mathsf{A}^1( \mathcal{X}_\MM,\mathbb{Q})\,. $$
Via pull-back, we also have the class
$${\mathcal{L}} \ \in \ \mathsf{A}^1( \mathcal{X}_\MM,\mathbb{Q})\, $$
constructed in Section \ref{fafa2}. 

The classes $\widehat{\mathcal{L}}$
and
$\mathcal{L}$ are
are certainly equal when restricted to the fibers of 
$$\pi_{\MM} \, : \mathcal{X}_{\MM} \, \rightarrow\, 
\MM_{0,0}(\pi_\Lambda, L)\, .$$
However, more is true. We define the reduced virtual class
of $\mathcal{X}_\MM$ by flat pull-back,
$$[\mathcal{X}_\MM]^{\text{red}} \, = \, \pi_\MM^*\, \left[\MM_{0,0}(\pi_\Lambda, L)\right]^{\text{red}}\ \in \ 
\mathsf{A}_{\mathsf{d}(\Lambda)+2}( \mathcal{X}_\MM,\mathbb{Q})\, ,$$
where $\mathsf{d}(\Lambda)=20- \text{rank}(\Lambda)$ is the dimension
of $\mathcal{M}_\Lambda$.

\begin{theorem} \label{dldl} 
For $L\in \Lambda$ admissible,
$$\w{\mathcal{L}}\cap [\mathcal{X}_\MM]^{\textup{red}}
\, =\, \mathcal{L} \cap [\mathcal{X}_\MM]^{\textup{red}}\ \in\  \mathsf{A}_{\mathsf{d}(\Lambda)+1}( \mathcal{X}_\MM,\mathbb{Q})\, .$$
\end{theorem}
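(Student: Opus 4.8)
The plan is to show that both divisor classes, after capping with $[\mathcal{X}_\MM]^{\textup{red}}$, compute one and the same cycle, namely the evaluation pushforward $\epsilon_{\MM*}[\MM_{0,1}(\pi_\Lambda,L)]^{\textup{red}}$ of the reduced class of the $1$-pointed space. The first step is to make the geometry explicit: the universal curve $\mathsf{C}\to\MM_{0,0}(\pi_\Lambda,L)$ is canonically the $1$-pointed space $\MM_{0,1}(\pi_\Lambda,L)$, the structure morphism is the forgetful map $\mathsf{ft}\colon\MM_{0,1}(\pi_\Lambda,L)\to\MM_{0,0}(\pi_\Lambda,L)$, and $\epsilon_\MM$ is the evaluation $\epsilon$ at the marking, so that $\pi_\MM\circ\epsilon_\MM=\mathsf{ft}$. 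Since the reduced obstruction theory is pulled back along $\mathsf{ft}$, the reduced classes are compatible,
$$[\MM_{0,1}(\pi_\Lambda,L)]^{\textup{red}} \,=\, \mathsf{ft}^*\,[\MM_{0,0}(\pi_\Lambda,L)]^{\textup{red}}\,.$$

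For $\widehat{\mathcal{L}}$ this is the clean half. By construction $\widehat{\mathcal{L}}$ is the Hilbert--Chow class of the family of image curves $\epsilon_\MM(\mathsf{C})$, and its defining property is the relative projection formula: capping with a class pulled back from the base computes the pushforward of the universal curve,
$$\widehat{\mathcal{L}}\cap\pi_\MM^*\beta \,=\, \epsilon_{\MM*}\,\mathsf{ft}^*\beta \,.$$
Taking $\beta=[\MM_{0,0}(\pi_\Lambda,L)]^{\textup{red}}$ and using the compatibility above yields $\widehat{\mathcal{L}}\cap[\mathcal{X}_\MM]^{\textup{red}} = \epsilon_{\MM*}[\MM_{0,1}(\pi_\Lambda,L)]^{\textup{red}}$.

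For $\mathcal{L}=p^*\mathcal{L}_\Lambda$, with $p\colon\mathcal{X}_\MM=\mathcal{X}_\Lambda\times_{\mathcal{M}_\Lambda}\MM_{0,0}(\pi_\Lambda,L)\to\mathcal{X}_\Lambda$ the projection, the plan is to exploit the Cartesian square over $\mathcal{M}_\Lambda$ together with the smoothness of $\pi_\Lambda$. Because $\pi_\Lambda$ is smooth, $p$ is flat, and flat base change applied to the cycle $N_0(L)\,\mathcal{L}_\Lambda=\epsilon_*[\MM_{0,1}(\pi_\Lambda,L)]^{\textup{red}}$ rewrites $p^*\epsilon_*$ as $\epsilon'_*\,p''^*$ on the fiber product $\MM_{0,1}(\pi_\Lambda,L)\times_{\mathcal{M}_\Lambda}\MM_{0,0}(\pi_\Lambda,L)$, through which $\epsilon_\MM$ factors via the graph $s=(\textup{id},\mathsf{ft})$. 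Capping with $[\mathcal{X}_\MM]^{\textup{red}}=\pi_\MM^*[\MM_{0,0}(\pi_\Lambda,L)]^{\textup{red}}$ should then collapse this fiber product onto the graph and recover $\epsilon_{\MM*}[\MM_{0,1}(\pi_\Lambda,L)]^{\textup{red}}$ with multiplicity $1$, matching the computation for $\widehat{\mathcal{L}}$; note as a consistency check that $p_*[\mathcal{X}_\MM]^{\textup{red}}=N_0(L)\,[\mathcal{X}_\Lambda]$, so the virtual degrees are correct. An equivalent formulation is rigidity: since $\pi_\MM$ is a smooth family of $K3$ surfaces, $R^1\pi_{\MM*}\mathcal{O}=0$, so the difference $\widehat{\mathcal{L}}-\mathcal{L}$, which restricts to $0$ on every fiber, is a pullback $\pi_\MM^*D$ from the base, and the theorem becomes the vanishing $D\cap[\MM_{0,0}(\pi_\Lambda,L)]^{\textup{red}}=0$.

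The main obstacle is precisely this last vanishing. The morphism $\phi\colon\MM_{0,0}(\pi_\Lambda,L)\to\mathcal{M}_\Lambda$ is only generically finite of virtual degree $N_0(L)$, neither finite nor flat, so the fiber product carries excess components---the other rational curves sitting in each fiber---beyond the graph of $\mathsf{ft}$. The heart of the argument is to show that these excess contributions die against the reduced class, equivalently that the vertical correction $D$ pairs trivially with $[\MM_{0,0}(\pi_\Lambda,L)]^{\textup{red}}$. I expect this to follow from a local analysis on the support of the reduced class, where the averaged divisor $\mathcal{L}_\Lambda$ and the universal image curve restrict to the same cycle; upgrading the fiberwise equality $\mathcal{L}|_{\textup{fiber}}=\widehat{\mathcal{L}}|_{\textup{fiber}}=L$ to this localized statement, rather than leaving it merely fiberwise, is the one genuinely delicate step, and is exactly what forces the appearance of the reduced class in the theorem.
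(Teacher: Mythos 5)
Your setup correctly reduces the theorem to the statement that a vertical divisor class dies against the reduced class: since $\w{\mathcal{L}}$ and $\mathcal{L}$ agree on the fibers of $\pi_\MM$ and $H^1(X,\mathcal{O}_X)=0$, the difference is $\pi_\MM^*D$ for a divisor class $D$ on $\MM_{0,0}(\pi_\Lambda,L)$, and the theorem is equivalent to $D\cap\left[\MM_{0,0}(\pi_\Lambda,L)\right]^{\textup{red}}=0$. This is exactly the paper's own first step (equation \eqref{th51}, where $D$ is identified as $\tfrac{1}{24}\,(\w{\kappa}_{[L;1]}-\kappa_{[L;1]})$, together with Proposition \ref{blab}). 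Your treatment of the $\w{\mathcal{L}}$ side is also consistent with what the paper uses: the identity $\w{\mathcal{L}}\cap[\mathcal{X}_\MM]^{\textup{red}}=\epsilon_{\MM*}\left[\MM_{0,1}(\pi_\Lambda,L)\right]^{\textup{red}}$ is precisely the (for $n=1$) second equality in the proof of Proposition \ref{zzzr}. But once that is granted, your remaining claim for $\mathcal{L}$ is \emph{equivalent} to the theorem, and it is here that the proposal stops being a proof.

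Concretely: (i) the base-change step is wrong as stated, since $p:\mathcal{X}_\MM\to\mathcal{X}_\Lambda$ is a base change of $\phi:\MM_{0,0}(\pi_\Lambda,L)\to\mathcal{M}_\Lambda$, not of $\pi_\Lambda$, and $\phi$ is far from flat; (ii) after substituting refined Gysin maps (legitimate, as $\mathcal{X}_\Lambda$ is smooth), the fiber product $\MM_{0,1}(\pi_\Lambda,L)\times_{\mathcal{M}_\Lambda}\MM_{0,0}(\pi_\Lambda,L)$ has many excess components, and the assertion that capping with $[\mathcal{X}_\MM]^{\textup{red}}$ ``collapses onto the graph'' of the forgetful map is not a computation but a restatement of the theorem (note the paper proves Proposition \ref{zzzr} only as a consequence of Theorem \ref{dldl}, so invoking anything of that shape here is circular); and (iii) the direction you propose for the ``genuinely delicate step'' --- a local analysis upgrading the fiberwise equality on the support of the reduced class --- cannot work even in principle, because the class to be killed, $\pi_\MM^*D$, restricts to zero on \emph{every} fiber and is therefore invisible to any fiberwise or local-over-the-base argument. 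The vanishing is irreducibly global and must use a property of the reduced class beyond its existence. The paper's global input (Section \ref{pfpf5}) is the WDVV relation exported over $\MM_{0,0}(\pi_\Lambda,L)$ itself: capping the exported relation \eqref{dada} with $\Delta_{(12)}\Delta_{(34)}$ (and with $\mathcal{H}_{(1)}\mathcal{H}_{(2)}\Delta_{(34)}$ in the case $\langle L,L\rangle_\Lambda=0$) and pushing down shows that $\w{\kappa}_{[L;1]}\cap\left[\MM_{0,0}(\pi_\Lambda,L)\right]^{\textup{red}}$ equals $\phi^*(D')\cap\left[\MM_{0,0}(\pi_\Lambda,L)\right]^{\textup{red}}$ for some $D'\in\mathsf{A}^1(\mathcal{M}_\Lambda,\mathbb{Q})$, after which $\phi_*$, the projection formula, and $\phi_*\left[\MM_{0,0}(\pi_\Lambda,L)\right]^{\textup{red}}=N_0(L)\cdot[\mathcal{M}_\Lambda]$ force $D'=\kappa_{[L;1]}$. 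Nothing playing the role of this global relation appears in your proposal, so the key vanishing remains unproven.
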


\noindent The proof of Theorem \ref{dldl} will be given in Section \ref{pfpf5}.

\section{Basic push-forwards in genus $0$ and $1$} \label{expo}

\subsection{Push-forwards of reduced classes}
Let $L \in \Lambda$ be a nonzero class. As discussed in Section \ref{conjs}, the export construction requires knowing the push-forward of the reduced virtual class $\left[\MM_{g,n}(\pi_\Lambda,L)\right]^{\text{red}}$ via the evaluation map
$$\epsilon^n: \MM_{g,n}(\pi_\Lambda,L) \rightarrow \mathcal{X}^n_\Lambda\, .$$
Fortunately, to export the WDVV and Getzler relations, we only need to analyze
three simple cases.

\subsection{Case $g = 0$, $n \geq 1$}

Consider the push-forward class in genus 0,
$$\epsilon^n_*\left[\MM_{0,n}(\pi_\Lambda,L)\right]^{\text{red}} \ \in \ \mathsf{A}^n(\mathcal{X}_\Lambda^n, \mathbb{Q}) \,.$$
For $n = 1$ and $L \in \Lambda$ admissible, we have by definition
$$\epsilon_*\left[\MM_{0,1}(\pi_\Lambda,L)\right]^{\text{red}} \, = \, N_0(L) \cdot \mathcal{L} \,.$$

\begin{proposition}\label{zzzr}
For all $n \geq 1$, we have
$$\epsilon^n_*\left[\MM_{0,n}(\pi_\Lambda,L)\right]^{\textup{red}} \, =
\begin{cases}
\, N_0(L) \cdot \mathcal{L}_{(1)} \cdots \mathcal{L}_{(n)} & \text{ if $L \in \Lambda$ is admissible} \,, \\
\, 0 & \text{ if not} \,.
\end{cases}$$
Here $\mathcal{L}_{(i)}$ is the pull-back of $\mathcal{L}$ via the $i^{\text{th}}$ projection.
\end{proposition}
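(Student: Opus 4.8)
The plan is to treat the two cases separately. For inadmissible $L$ the assertion $\epsilon^n_*[\MM_{0,n}(\pi_\Lambda,L)]^{\text{red}}=0$ follows from Proposition \ref{vvvv}: the deformation argument used there to kill the reduced class on a fixed $K3$ surface applies over the base $\mathcal{M}_\Lambda$, since an inadmissible $L$ fails to be effective on a generic $\Lambda$-polarized $K3$ surface, so the relevant moduli space does not dominate $\mathcal{M}_\Lambda$. Hence the relative reduced class already vanishes, and so does any push-forward. The substantive case is $L$ admissible, which I would prove by induction on $n$. The base case $n=1$ is exactly the definition of $\mathcal{L}$ recalled in Section \ref{fafa2}, namely $\epsilon_*[\MM_{0,1}(\pi_\Lambda,L)]^{\text{red}}=N_0(L)\cdot\mathcal{L}$.

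For the inductive step, assume the formula for $n-1$. I would use the forgetful morphism $\mathsf{f}:\MM_{0,n}(\pi_\Lambda,L)\to\MM_{0,n-1}(\pi_\Lambda,L)$ dropping the last marked point, which realizes the source as the universal curve over the target and satisfies the reduced-class compatibility $[\MM_{0,n}(\pi_\Lambda,L)]^{\text{red}}=\mathsf{f}^*[\MM_{0,n-1}(\pi_\Lambda,L)]^{\text{red}}$. Writing $\mathcal{Y}=\MM_{0,n-1}(\pi_\Lambda,L)\times_{\mathcal{M}_\Lambda}\mathcal{X}_\Lambda$ with projection $p:\mathcal{Y}\to\MM_{0,n-1}(\pi_\Lambda,L)$, the evaluation factors as
$$\epsilon^n=(\epsilon^{n-1}\times\mathrm{id})\circ G,\qquad G=(\mathsf{f},\epsilon_n):\MM_{0,n}(\pi_\Lambda,L)\to\mathcal{Y}.$$
The map $G$ identifies $\MM_{0,n}(\pi_\Lambda,L)$ with the universal image curve inside $\mathcal{Y}$. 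Carrying out the construction of Section \ref{gwgw} with base $\MM_{0,n-1}(\pi_\Lambda,L)$ in place of $\MM_{0,0}(\pi_\Lambda,L)$, this universal image curve determines a Chow cohomology class $\widehat{\mathcal{L}}$ on $\mathcal{Y}$ with
$$G_*[\MM_{0,n}(\pi_\Lambda,L)]^{\text{red}}=\widehat{\mathcal{L}}\cap[\mathcal{Y}]^{\text{red}},\qquad [\mathcal{Y}]^{\text{red}}=p^*[\MM_{0,n-1}(\pi_\Lambda,L)]^{\text{red}}.$$

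I would then invoke Theorem \ref{dldl}, applied over the base $\MM_{0,n-1}(\pi_\Lambda,L)$, to replace $\widehat{\mathcal{L}}$ by the pulled-back divisor $\mathcal{L}$, obtaining $G_*[\MM_{0,n}(\pi_\Lambda,L)]^{\text{red}}=\mathcal{L}\cap[\mathcal{Y}]^{\text{red}}$. Pushing forward by $\epsilon^{n-1}\times\mathrm{id}$ and using the projection formula (the class $\mathcal{L}$ on $\mathcal{Y}$ is the pull-back of $\mathcal{L}_{(n)}$), together with flat base change in the Cartesian square relating $p$ to the projection $q:\mathcal{X}_\Lambda^n\to\mathcal{X}_\Lambda^{n-1}$, yields
$$\epsilon^n_*[\MM_{0,n}(\pi_\Lambda,L)]^{\text{red}}=\mathcal{L}_{(n)}\cdot q^*\big(\epsilon^{n-1}_*[\MM_{0,n-1}(\pi_\Lambda,L)]^{\text{red}}\big).$$
The inductive hypothesis evaluates the right-hand factor as $N_0(L)\,\mathcal{L}_{(1)}\cdots\mathcal{L}_{(n-1)}$, and since $q^*$ carries each $\mathcal{L}_{(i)}$ to the corresponding class on $\mathcal{X}_\Lambda^n$, the product becomes $N_0(L)\,\mathcal{L}_{(1)}\cdots\mathcal{L}_{(n)}$, completing the induction.

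The main obstacle is the comparison $\widehat{\mathcal{L}}\cap[\mathcal{Y}]^{\text{red}}=\mathcal{L}\cap[\mathcal{Y}]^{\text{red}}$, i.e. Theorem \ref{dldl}, and in particular the need to run its proof with the base $\MM_{0,0}(\pi_\Lambda,L)$ replaced by the larger base $\MM_{0,n-1}(\pi_\Lambda,L)$: the two divisors agree on fibers for trivial reasons, but the equality of the capped classes is the genuine input. A secondary point requiring care is the reduced-class compatibility $[\MM_{0,n}]^{\text{red}}=\mathsf{f}^*[\MM_{0,n-1}]^{\text{red}}$ under the forgetful map, together with the identification of $G$ with the universal image curve at the level of the reduced class, including its behavior over the boundary and for imprimitive $L$.
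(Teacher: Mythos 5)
Your admissible case is essentially the paper's own argument, reorganized as an induction on $n$: the paper factors $\epsilon^n$ all at once through $\mathcal{X}^n_{\MM}$ over $\MM_{0,0}(\pi_\Lambda,L)$, identifies the push-forward with $\w{\mathcal{L}}_{(1)}\cdots\w{\mathcal{L}}_{(n)}\cap[\mathcal{X}^n_{\MM}]^{\text{red}}$, and applies Theorem \ref{dldl}, while you do the same one marking at a time over $\MM_{0,n-1}(\pi_\Lambda,L)$. The ``main obstacle'' you flag -- Theorem \ref{dldl} over the larger base -- is not a genuine one: the universal image curve, the class $\w{\mathcal{L}}$, and the reduced class over $\MM_{0,n-1}(\pi_\Lambda,L)$ are all pull-backs of the corresponding objects over $\MM_{0,0}(\pi_\Lambda,L)$ along the flat forgetful morphism (forgetting a marking does not change the image of a stable map), so the identity you need follows from Theorem \ref{dldl} by flat pull-back rather than by re-running its proof.

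The inadmissible case, however, contains a genuine gap. From ``the relevant moduli space does not dominate $\mathcal{M}_\Lambda$'' you conclude that ``the relative reduced class already vanishes,'' and this inference is false: a cycle class supported over a proper closed subset of the base need not vanish. The vanishing mechanism of Proposition \ref{vvvv} is specific to Gysin restriction to a point of a $1$-dimensional base ($\iota^!$ of a cycle not dominating $C$ is $0$); it kills the restriction of the reduced class to a general fiber, not the class itself. Indeed, for inadmissible $L$ the relative reduced class is nonzero in general. Take $L\in\Lambda$ primitive with $\langle L,L\rangle_\Lambda=-8$ and $\langle H,L\rangle_\Lambda>0$, which is inadmissible since $-8<-2$; over a Noether-Lefschetz locus $\mathcal{M}_{\widetilde{\Lambda}}$ where $L=2M$ for a $(-2)$-class $M\in\widetilde{\Lambda}$, the class $L$ becomes admissible and effective, $\MM_{0,n}(\pi_\Lambda,L)$ is nonempty there, and it carries a nonzero reduced class by the obstruction calculation of \cite[Lemma~1]{MP} -- these are exactly the kinds of classes, supported over proper Noether-Lefschetz loci, that produce the nonzero split contributions in Sections \ref{wwww} and \ref{gggg}. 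What makes the Proposition true is a dimension count that your argument omits: by Proposition \ref{vvvv} and spreading out, the reduced class is supported over a proper subset of $\mathcal{M}_\Lambda$, and since $K3$ surfaces are not ruled, the genus $0$ evaluation image in each fiber $X^n$ lies in a finite union of products of rational curves, of dimension at most $n$. Hence the support of $\epsilon^n_*\left[\MM_{0,n}(\pi_\Lambda,L)\right]^{\text{red}}$ has codimension at least $n+1$ in $\mathcal{X}^n_\Lambda$, which forces this codimension-$n$ class to vanish -- even though the reduced class upstairs does not.
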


\begin{proof}
Consider first the case where the class $L\in \Lambda$ is admissible.
The evaluation map~$\epsilon^n$ factors as
$$\MM_{0,n}(\pi_\Lambda,L) \, \stackrel{\epsilon^n_{\MM}}{\longrightarrow} \, 
\mathcal{X}_\MM^n \, \stackrel{\rho^n}{\longrightarrow}\, \mathcal{X}_\Lambda^n$$
where $\epsilon^n_\MM$ is the lifted evaluation map and $\rho^n$
is the projection. We have
\begin{align*}
\epsilon^n_*\left[\MM_{0,n}(\pi_\Lambda,L)\right]^{\text{red}} \, & = \,
\rho^n_*\epsilon_{\MM*}^n \left[\MM_{0,n}(\pi_\Lambda,L)\right]^{\text{red}} \\
& = \, \rho^n_* \left(\w{\mathcal{L}}_{(1)} \cdots \w{\mathcal{L}}_{(n)} \cap [\mathcal{X}_\MM^n]^{\text{red}}\right) \\
& = \, \rho^n_* \left({\mathcal{L}}_{(1)} \cdots {\mathcal{L}}_{(n)} \cap
[\mathcal{X}_\MM^n]^{\text{red}}\right) \\
& = \, N_0(L)\cdot {\mathcal{L}}_{(1)} \cdots {\mathcal{L}}_{(n)} \cap
[\mathcal{X}^n_\Lambda]\, , 
\end{align*}
where the third equality is a consequence of Theorem \ref{dldl}.

Next, consider the case where $L \in \Lambda$ is inadmissible. By Proposition \ref{vvvv} and a spreading out argument \cite[1.1.2]{Voi}, the reduced class $\left[\MM_{0, n}(\pi_\Lambda, L)\right]^{\text{red}}$ is supported over a proper subset
of $\mathcal{M}_\Lambda$. Since $K3$ surfaces are not ruled, the support of
$$\epsilon^n_*\left[\MM_{0,n}(\pi_\Lambda,L)\right]^{\text{red}} \ \in \ \mathsf{A}^n(\mathcal{X}_\Lambda^n, \mathbb{Q})$$
has codimension at least $n + 1$  and therefore  vanishes.
\end{proof}

\subsection{Case $g = 1$, $n= 1$}

The push-forward class
$$\epsilon_*\left[\MM_{1,1}(\pi_\Lambda,L)\right]^{\text{red}} \ \in \ \mathsf{A}^0(\mathcal{X}_\Lambda, \mathbb{Q})$$
is a multiple of the fundamental class of $\mathcal{X}_\Lambda$. 
\begin{proposition} \label{g1p1}
We have
\begin{equation*}
\epsilon_*\left[\MM_{1,1}(\pi_\Lambda,L)\right]^{\textup{red}}
= \begin{cases}
\, N_1(L) \cdot [\mathcal{X}_\Lambda]  & \text{ if $L \in \Lambda$ is admissible and $\langle L, L\rangle_\Lambda \geq 0$} \,, \\
\, 0 & \text{ if not} \,.
\end{cases}
\end{equation*}
\end{proposition}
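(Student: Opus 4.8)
The plan is to determine the push-forward class, which lies in $\mathsf{A}^0(\mathcal{X}_\Lambda,\mathbb{Q})$, by first observing that it is a multiple of the fundamental class and then computing the multiple by restricting to a single fiber, where it is governed by the very definition of $N_1(L)$. First I would record the dimension count: the reduced class $[\MM_{1,1}(\pi_\Lambda,L)]^{\text{red}}$ has $\pi_\Lambda$-relative dimension $g+n=2$, hence absolute dimension $\mathsf{d}(\Lambda)+2=\dim\mathcal{X}_\Lambda$. Since $\mathcal{X}_\Lambda$ is irreducible, $\mathsf{A}_{\mathsf{d}(\Lambda)+2}(\mathcal{X}_\Lambda,\mathbb{Q})=\mathsf{A}^0(\mathcal{X}_\Lambda,\mathbb{Q})=\mathbb{Q}\cdot[\mathcal{X}_\Lambda]$, so
$$\epsilon_*[\MM_{1,1}(\pi_\Lambda,L)]^{\text{red}} \, = \, c\cdot[\mathcal{X}_\Lambda]$$
for a constant $c\in\mathbb{Q}$ to be identified.

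Second, I would pin down $c$ fiberwise. Choose a general point $\iota\colon\mathrm{Spec}\,\mathbb{C}\hookrightarrow\mathcal{M}_\Lambda$ whose fiber is a $K3$ surface $X$; since $\mathcal{M}_\Lambda$ is nonsingular, $\iota$ is a regular embedding of codimension $\mathsf{d}(\Lambda)$ and the refined Gysin map $\iota^!$ is defined. The defining base-change property of the reduced obstruction theory gives $\iota^![\MM_{1,1}(\pi_\Lambda,L)]^{\text{red}}=[\MM_{1,1}(X,L)]^{\text{red}}$, exactly as in the proof of Proposition \ref{vvvv}, while flat pull-back gives $\iota^![\mathcal{X}_\Lambda]=[X]$. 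Because $\epsilon$ is proper, $\iota^!$ commutes with $\epsilon_*$, and therefore
$$c\cdot[X] \, = \, \iota^!\epsilon_*[\MM_{1,1}(\pi_\Lambda,L)]^{\text{red}} \, = \, \epsilon_{X*}[\MM_{1,1}(X,L)]^{\text{red}} \ \in\ \mathsf{A}_2(X,\mathbb{Q}).$$
The right-hand side is again a multiple of $[X]$, and capping with the point class $\mathsf{p}$ extracts the coefficient, so $c=\int_{[\MM_{1,1}(X,L)]^{\text{red}}}\text{ev}^*(\mathsf{p})=N_1(L)$.

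Finally, I would read off the stated case distinction from the value $c=N_1(L)$. By Proposition \ref{trtrtr} (Oberdieck), $N_1(L)\neq0$ precisely when $L$ is admissible and $\langle L,L\rangle_\Lambda\geq0$; for admissible $L$ with $\langle L,L\rangle_\Lambda<0$ the multiple cover formula \eqref{pqq3} forces $N_1(L)=0$, as every term then has argument $\leq-1$; and for inadmissible $L$, Proposition \ref{vvvv} gives $[\MM_{1,1}(X,L)]^{\text{red}}=0$, hence $N_1(L)=0$. This yields both branches of the proposition simultaneously.

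The main obstacle is the second step: justifying the identity $\iota^![\MM_{1,1}(\pi_\Lambda,L)]^{\text{red}}=[\MM_{1,1}(X,L)]^{\text{red}}$, i.e.\ the compatibility of the reduced virtual class with restriction to a fiber, together with the commutation of $\iota^!$ with the proper push-forward $\epsilon_*$. The compatibility is the same phenomenon used in Proposition \ref{vvvv}, and the push-pull commutation is Fulton's formula for the evident fiber square over $\iota$. Care is needed because the $K3$ fibers may be non-algebraic algebraic spaces; as in the proof of Proposition \ref{zzzr}, one works with the moduli of stable maps over an Artinian neighborhood of the chosen point, where the cones and the intersection theory remain algebraic and the formal manipulations above are valid.
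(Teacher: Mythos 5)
Your proof is correct and takes essentially the same approach as the paper: the paper's own two-sentence argument is exactly that the degree-zero push-forward is a multiple of $[\mathcal{X}_\Lambda]$, that the multiple is computed fiberwise to be $N_1(L)$, and that $N_1(L)$ vanishes both for inadmissible $L$ (via Proposition \ref{vvvv}) and for admissible $L$ with $\langle L,L\rangle_\Lambda<0$. Your write-up merely makes explicit the refined Gysin / base-change and push-pull bookkeeping that the paper leaves implicit.
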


\begin{proof}
The multiple of the fundamental class $[\mathcal{X}_\Lambda]$ can be computed fiberwise:
it is the genus 1 Gromov-Witten invariant
$$N_1(L) = \int_{[\MM_{1, 1}(X, L)]^{\text{red}}} \text{ev}^*(\mathsf{p}) \, .$$
The invariant vanishes for $L \in \text{Pic}(X)$ inadmissible as well as for $L$ admissible and $\langle L, L\rangle_X < 0$.
\end{proof}

\subsection{Case $g = 1$, $n= 2$}
The push-forward class is a divisor,
$$\epsilon^2_*\left[\MM_{1,2}(\pi_\Lambda,L)\right]^{\text{red}} \ \in \ \mathsf{A}^1(\mathcal{X}^2_\Lambda, \mathbb{Q}) \,.$$

\begin{proposition} \label{g1g1}
We have
\begin{multline*}
\epsilon^2_*\left[\MM_{1,2}(\pi_\Lambda,L)\right]^{\textup{red}} \\ 
= \begin{cases}
\, N_1(L) \cdot \Big(\mathcal{L}_{(1)} + \mathcal{L}_{(2)} + Z(L)\Big) & \text{ if $L \in \Lambda$ is admissible and $\langle L, L\rangle_\Lambda \geq 0$} \,, \\
\, 0 & \text{ if not} \,.
\end{cases}
\end{multline*}
Here $Z(L)$ is a divisor class in $\mathsf{A}^1(\mathcal{M}_\Lambda, \mathbb{Q})$ depending on $L$.\footnote{We identify $\AA(\mathcal{M}_\Lambda, \mathbb{Q})$ as a subring of $\AA(\mathcal{X}^n_\Lambda, \mathbb{Q})$ via $\pi_\Lambda^{n*}$.}
\end{proposition}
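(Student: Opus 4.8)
The plan is to compute the divisor class $\epsilon^2_*[\MM_{1,2}(\pi_\Lambda,L)]^{\text{red}} \in \mathsf{A}^1(\mathcal{X}^2_\Lambda,\mathbb{Q})$ by first reducing to the admissible, $\langle L,L\rangle_\Lambda \geq 0$ case, since the vanishing in all other cases follows exactly as in Proposition \ref{zzzr}: by Proposition \ref{vvvv} together with spreading out, the reduced class is supported over a proper locus of $\mathcal{M}_\Lambda$, and a codimension count (the push-forward lives in codimension $1$ but its support would force higher codimension) forces the class to vanish. So the content is entirely in the first case.

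In the admissible case, I would exploit the projection $\text{pr}_{(2)} : \mathcal{X}^2_\Lambda \to \mathcal{X}_\Lambda$ onto the first factor together with the structure of the class as a divisor on a smooth fibration. The key structural observation is that a divisor class on $\mathcal{X}^2_\Lambda$ can be split according to its restriction to the fibers of the two projections and its behavior along the diagonal. First I would restrict to a fixed $K3$ surface $X$ and analyze $\epsilon^2_*[\MM_{1,2}(X,L)]^{\text{red}} \in \mathsf{A}^1(X^2,\mathbb{Q})$. Fiberwise, the Beauville-Voisin ring of $X^2$ in degree $1$ is generated by $\text{pr}_1^*\text{Pic}(X)$, $\text{pr}_2^*\text{Pic}(X)$, and the diagonal $\Delta_{(12)}$ is in codimension $2$, so contributes nothing here. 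Thus the fiberwise class must be of the form $\alpha \cdot (\text{pr}_1^* L) + \beta \cdot (\text{pr}_2^* L)$ up to the monodromy-invariant constraint. To pin down the coefficients, I would use the forgetful-and-evaluation compatibility: forgetting the second marking and pushing forward recovers $\epsilon_*[\MM_{1,1}(\pi_\Lambda,L)]^{\text{red}} = N_1(L)\cdot[\mathcal{X}_\Lambda]$ from Proposition \ref{g1p1}, and intersecting against a fiber-point via $\text{ev}^*(\mathsf{p})$ reproduces the genus $1$ invariant. By the symmetry of $\MM_{1,2}$ under interchanging the two markings, the coefficients of $\mathcal{L}_{(1)}$ and $\mathcal{L}_{(2)}$ must be equal, and the constant should be $N_1(L)$ by comparison with the genus $1$ count of elliptic curves with one marked point on the canonical divisor class $\mathcal{L}$.

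The genuinely new ingredient, and the term that cannot be seen fiberwise, is the purely horizontal piece $N_1(L)\cdot Z(L)$ pulled back from $\mathcal{M}_\Lambda$ via $\pi^{2*}_\Lambda$. This term is invisible on a fixed $K3$ surface since the restriction of a pullback from the base to a fiber is trivial, which is precisely why the fiberwise computation only determines the $\mathcal{L}_{(1)}+\mathcal{L}_{(2)}$ part. I would therefore argue that the difference between the actual push-forward and $N_1(L)(\mathcal{L}_{(1)}+\mathcal{L}_{(2)})$ restricts to zero on every fiber of $\pi^2_\Lambda$, hence by the Leray-type splitting of $\mathsf{A}^1$ of a $K3$ fibration over a smooth base must be pulled back from $\mathcal{M}_\Lambda$; calling this class $N_1(L)\cdot Z(L)$ defines $Z(L)$, which is all the proposition claims. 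The main obstacle is justifying that the divisor $\epsilon^2_*[\MM_{1,2}(\pi_\Lambda,L)]^{\text{red}} - N_1(L)(\mathcal{L}_{(1)}+\mathcal{L}_{(2)})$ is genuinely pulled back from the base rather than merely fiberwise trivial; this requires controlling the relative Picard group of the smooth (algebraic space) fibration $\mathcal{X}^2_\Lambda \to \mathcal{M}_\Lambda$ and checking there is no extra monodromy-invariant divisor class beyond the pullbacks of $\mathcal{L}$ and of $\mathsf{A}^1(\mathcal{M}_\Lambda,\mathbb{Q})$. I expect this to go through cleanly because $\langle L,L\rangle_\Lambda \geq 0$ guarantees the relevant moduli spaces are well-behaved, and the coefficient $N_1(L)$ is forced to be nonzero by Proposition \ref{trtrtr}.
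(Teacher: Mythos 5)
Your treatment of the vanishing cases has a genuine gap. The ``if not'' alternative covers two distinct situations: (a) $L$ inadmissible, and (b) $L$ admissible with $\langle L, L\rangle_\Lambda < 0$. Your argument (Proposition \ref{vvvv} plus spreading out, so that the reduced class lies over a proper locus of $\mathcal{M}_\Lambda$) only addresses (a). In case (b) Proposition \ref{vvvv} is simply not applicable --- it is a statement about \emph{inadmissible} classes --- and its conclusion is in fact false there: an admissible class of negative square is a positive multiple of a $(-2)$-class, which is effective on \emph{every} fiber, so $\MM_{1,2}(\pi_\Lambda,L)$ dominates $\mathcal{M}_\Lambda$ and the reduced class is not supported over any proper sublocus of the base. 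The paper handles (b) by a separate specialization argument (footnote to Proposition \ref{g1g1}): one degenerates to a (possibly non-algebraic) family in which $L$ is generically a multiple of a $(-2)$-curve, concludes that the reduced class is fiberwise supported on maps into finitely many rigid curves, and hence that the image in $\mathcal{X}^2_\Lambda$ has codimension $2$, killing the divisor class. Moreover, even in case (a) your phrase ``exactly as in Proposition \ref{zzzr}'' hides a change of geometric input: for $n=2$, $g=1$ the codimension count needs that a genus $1$ curve cannot pass through two general points of a $K3$ (i.e.\ $K3$ surfaces are not \emph{elliptically connected}, which the paper deduces from Gounelas's theorem that elliptically connected surfaces are uniruled), not the ``$K3$ surfaces are not ruled'' fact used in genus $0$.

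For the main case ($L$ admissible, $\langle L,L\rangle_\Lambda \geq 0$) your route is viable but differs from the paper's, and one justification is off. The paper reduces to the rank $1$ or rank $2$ lattice spanned by $H$ and $L$, writes the push-forward in terms of the relative Picard group and a class pulled back from the base, and then eliminates the unwanted $\mathcal{H}$-coefficient by applying the divisor equation with respect to $\langle L,L\rangle_\Lambda H - \langle H,L\rangle_\Lambda L$ together with the Hodge index theorem. Your fiberwise-plus-seesaw organization can avoid that reduction, but note that the Picard (or Beauville--Voisin) structure of $X^2$ only gives $\mathrm{pr}_1^*A + \mathrm{pr}_2^*B$ with $A, B \in \mathrm{Pic}(X)_{\mathbb{Q}}$ arbitrary, and ``monodromy-invariance'' imposes nothing, since $\Gamma_\Lambda$ acts trivially on $\Lambda$: when $\mathrm{rank}(\Lambda) > 1$ nothing at that stage forces $A, B \in \mathbb{Q}L$. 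What rescues the argument is the point-insertion computation you mention in passing: intersecting with $\mathrm{ev}_1^*(\mathsf{p})$ and pushing forward by $\mathrm{ev}_2$, using $[\MM_{1,2}(X,L)]^{\mathrm{red}} = \pi^*[\MM_{1,1}(X,L)]^{\mathrm{red}}$ for the forgetful map $\pi$, computes $B = N_1(L)\cdot L$ outright, proportionality to $L$ included; this should be stated as the argument rather than as a consistency check. With that fix, the remaining seesaw step (a divisor class trivial on all fibers is pulled back from the base, via $H^1(X,\mathcal{O}_X)=0$ and base change) is exactly the fact the paper itself invokes in Section \ref{pfpf5}, so the main case would go through.
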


\noindent In Section \ref{dvdv}, we will compute $Z(L)$ explicitly in terms of Noether-Lefschetz divisors in the moduli
space $\mathcal{M}_\Lambda$.

\begin{proof}
Consider first the case where the class
$L\in \Lambda$ is admissible and $\langle L, L\rangle_\Lambda \geq 0$. If $L$ is a multiple of the quasi-polarization $H$, we may assume $\Lambda = (2\ell)$. Then, the relative Picard group
$$\text{Pic}(\mathcal{X}_\Lambda/\mathcal{M}_\Lambda)$$
has rank 1. Since the reduced class $\left[\MM_{1, 2}(\pi_\Lambda, L)\right]^{\text{red}}$ is $\mathfrak{S}_2$-invariant, 
the push-forward takes the form
\begin{equation} \label{fofofo}
\epsilon^2_*\left[\MM_{1,2}(\pi_\Lambda,L)\right]^{\textup{red}} \, = \, c(L) \cdot \left(\mathcal{L}_{(1)} + \mathcal{L}_{(2)}\right) + \widetilde{Z}(L) \ \in \ \mathsf{A}^1(\mathcal{X}^2_\Lambda, \mathbb{Q}) \, ,
\end{equation}
where $c(L) \in \mathbb{Q}$ and $\widetilde{Z}(L)$ is (the pull-back of) a divisor class
 in $\mathsf{A}^1(\mathcal{M}_\Lambda, \mathbb{Q})$. 

The constant $c(L)$ can be computed fiberwise: by the divisor equation{\footnote{Since
$L$ is a multiple of the quasi-polarization, $\langle L, L\rangle_\Lambda > 0$.}}, we have
$$c(L) = N_1(L) \,.$$
Since $N_1(L) \neq 0$ by Proposition \ref{trtrtr}, we can rewrite \eqref{fofofo} as
$$\epsilon^2_*\left[\MM_{1,2}(\pi_\Lambda,L)\right]^{\textup{red}} \, = \, N_1(L) \cdot \Big(\mathcal{L}_{(1)} + \mathcal{L}_{(2)} + Z(L)\Big) \ \in \ \mathsf{A}^1(\mathcal{X}^2_\Lambda, \mathbb{Q}) \,,$$
where $Z(L) \in \mathsf{A}^1(\mathcal{M}_\Lambda, \mathbb{Q})$.

If $L \neq m \cdot H$, we may assume $\Lambda$ to be a rank 2 lattice with $H,L\in \Lambda$.
Then, the push-forward class takes the form
\begin{multline} \label{rmrmrm}
\epsilon^2_*\left[\MM_{1,2}(\pi_\Lambda,L)\right]^{\textup{red}} \, = \, c_H(L) \cdot \left(\mathcal{H}_{(1)} + \mathcal{H}_{(2)}\right) + c_L(L) \cdot \left(\mathcal{L}_{(1)} + \mathcal{L}_{(2)}\right) \\
+ \widetilde{Z}(L) \ \in \ \mathsf{A}^1(\mathcal{X}^2_\Lambda, \mathbb{Q}) \,,
\end{multline}
where $c_H(L), c_L(L) \in \mathbb{Q}$ and $\widetilde{Z}(L) \in \mathsf{A}^1(\mathcal{M}_\Lambda, \mathbb{Q})$. By applying the divisor equation with respect to
$$\langle L, L\rangle_\Lambda \cdot H - \langle H, L\rangle_\Lambda \cdot L \,,$$
we find
$$c_H(L) \Big(2\ell \langle L, L\rangle_\Lambda - \langle H, L\rangle_\Lambda^2\Big) = 0 \,.$$
Since $2\ell \langle L, L\rangle_\Lambda - \langle H, L\rangle_\Lambda^2 < 0$
by the Hodge index theorem, we have $c_H(L) = 0$. Moreover, by applying the divisor equation with respect to $H$, we find
$$c_L(L) = N_1(L) \,.$$
Since $N_1(L) \neq 0$ by Proposition \ref{trtrtr}, we can rewrite \eqref{rmrmrm} as
$$\epsilon^2_*\left[\MM_{1,2}(\pi_\Lambda,L)\right]^{\textup{red}} \, = \, N_1(L) \cdot \Big(\mathcal{L}_{(1)} + \mathcal{L}_{(2)} + Z(L)\Big) \ \in \ \mathsf{A}^1(\mathcal{X}^2_\Lambda, \mathbb{Q}) \,,$$
where $Z(L) \in \mathsf{A}^1(\mathcal{M}_\Lambda, \mathbb{Q})$.

Next, consider the case where the class 
$L \in \Lambda$ is inadmissible. As before, by Proposition \ref{vvvv} and a spreading out argument, the reduced class $\left[\MM_{1, 2}(\pi_\Lambda, L)\right]^{\text{red}}$ is supported over a proper subset of $\mathcal{M}_\Lambda$. Since $K3$ surfaces are not elliptically connected\footnote{A nonsingular projective variety $Y$ is said to be {\it elliptically connected} if there is a genus 1 curve passing through two general points of $Y$. In dimension $\geq 2$, elliptically connected varieties are uniruled, see \cite[Proposition 6.1]{Gou}.}, the support of the push-forward class
$$\epsilon^2_*\left[\MM_{1,2}(\pi_\Lambda, L)\right]^{\text{red}} \ \in \ \mathsf{A}^1(\mathcal{X}_\Lambda^2, \mathbb{Q})$$
has codimension at least 2. Hence, the push-forward class vanishes.

Finally, for $L \in \Lambda$ admissible and $\langle L, L\rangle_\Lambda < 0$, the reduced class $\left[\MM_{1,2}(\pi_\Lambda, L)\right]^{\text{red}}$ is fiberwise supported on the products of finitely many curves in the $K3$ surface.{\footnote{The proof exactly follows
the argument of Proposition \ref{vvvv}. We find a (possibly non-algebraic) 1-parameter family of $K3$
surfaces for which the class $L$ is generically a multiple of a $(-2)$-curve. The open moduli space
of stable maps to the $K3$ fibers 
which are not supported on the family
of $(-2)$-curves (and its  
limit curve in the special fiber) is constrained
to lie over the special point in the base
of the family.
The specialization argument
of Proposition \ref{vvvv} then shows
the virtual class is 0 when restricted to 
the open moduli space of stable maps
to the special fiber which are not supported
on the limit curve.}} This implies the support of the push-forward class $\epsilon^2_*\left[\MM_{1,2}(\pi_\Lambda, L)\right]^{\text{red}}$ has codimension 2 in~$\mathcal{X}_\Lambda^2$. Hence, the push-forward class vanishes.
\end{proof}

\section{Exportation of the WDVV relation} \label{wwww}
\subsection{Exportation} 
Let $L \in \Lambda$ be an admissible class. 
Consider the
morphisms
$$ \MM_{0,4}\ \stackrel{\tau}{\longleftarrow} \ \MM_{0,4}(\pi_\Lambda, L) \ 
\stackrel{\epsilon^4}{\longrightarrow}\  
\mathcal{X}^4_{\Lambda}\, .$$
Following the notation of Section \ref{conjs}, we export here the WDVV relation with respect to 
the curve class $L$,
\begin{equation}\label{exex}
\epsilon_*^4 \tau^*(\mathsf{WDVV}) \,=\, 0 \ \in\ \mathsf{A}^5
(\mathcal{X}^4_\Lambda, \mathbb{Q})\, .
\end{equation}
We will compute $\epsilon_*^4 \tau^*(\mathsf{WDVV})$ by applying the splitting
axiom of Gromov-Witten theory to the two terms of the WDVV relation \eqref{wdvv}.
The splitting axiom requires a distribution of the curve class
to each vertex of each graph appearing in \eqref{wdvv}.

\subsection{WDVV relation: unsplit contributions}

The unsplit contributions are obtained  from curve class distributions 
which do {\it not} split $L$. The first unsplit
contributions come from the first graph of \eqref{wdvv}:
$$\left[\begin{tikzpicture}[baseline={([yshift=-.5ex]current bounding box.center)}]
	\node[leg] (l3) at (0,2) [label=above:$3$] {};
	\node[leg] (l4) at (1,2) [label=above:$4$] {};
	\node[vertex] (v2) at (.5,1.5) [label=right:$0$] {};
	\node[circ] (v1) at (.5,.5) [label=right:$0$] {$L$};
	\node[leg] (l1) at (0,0) [label=below:$1$] {};
	\node[leg] (l2) at (1,0) [label=below:$2$] {};
	\path
		(l3) edge (v2)
		(l4) edge (v2)
		(v2) edge (v1)
		(v1) edge (l1)
		(v1) edge (l2)
	;
\end{tikzpicture}\right] \ + \ 
\left[\begin{tikzpicture}[baseline={([yshift=-.5ex]current bounding box.center)}]
	\node[leg] (l3) at (0,2) [label=above:$3$] {};
	\node[leg] (l4) at (1,2) [label=above:$4$] {};
	\node[circ] (v2) at (.5,1.5) [label=right:$0$] {$L$};
	\node[vertex] (v1) at (.5,.5) [label=right:$0$] {};
	\node[leg] (l1) at (0,0) [label=below:$1$] {};
	\node[leg] (l2) at (1,0) [label=below:$2$] {};
	\path
		(l3) edge (v2)
		(l4) edge (v2)
		(v2) edge (v1)
		(v1) edge (l1)
		(v1) edge (l2)
	;
\end{tikzpicture}\right]$$
$$N_0(L) \cdot \Big({\mathcal{L}}_{(1)} {\mathcal{L}}_{(2)} {\mathcal{L}}_{(3)} \Delta_{(34)} + {\mathcal{L}}_{(1)}{\mathcal{L}}_{(3)} {\mathcal{L}}_{(4)} \Delta_{(12)}\Big) \,.$$

\vspace{8pt}
\noindent The unsplit contributions  from the second graph of \eqref{wdvv} are:
$$- \left[\begin{tikzpicture}[baseline={([yshift=-.5ex]current bounding box.center)}]
	\node[leg] (l2) at (0,2) [label=above:$2$] {};
	\node[leg] (l4) at (1,2) [label=above:$4$] {};
	\node[vertex] (v2) at (.5,1.5) [label=right:$0$] {};
	\node[circ] (v1) at (.5,.5) [label=right:$0$] {$L$};
	\node[leg] (l1) at (0,0) [label=below:$1$] {};
	\node[leg] (l3) at (1,0) [label=below:$3$] {};
	\path
		(l2) edge (v2)
		(l4) edge (v2)
		(v2) edge (v1)
		(v1) edge (l1)
		(v1) edge (l3)
	;
\end{tikzpicture}\right] \ - \ 
\left[\begin{tikzpicture}[baseline={([yshift=-.5ex]current bounding box.center)}]
	\node[leg] (l2) at (0,2) [label=above:$2$] {};
	\node[leg] (l4) at (1,2) [label=above:$4$] {};
	\node[circ] (v2) at (.5,1.5) [label=right:$0$] {$L$};
	\node[vertex] (v1) at (.5,.5) [label=right:$0$] {};
	\node[leg] (l1) at (0,0) [label=below:$1$] {};
	\node[leg] (l3) at (1,0) [label=below:$3$] {};
	\path
		(l2) edge (v2)
		(l4) edge (v2)
		(v2) edge (v1)
		(v1) edge (l1)
		(v1) edge (l3)
	;
\end{tikzpicture}\right]$$
$$-N_0(L) \cdot \Big({\mathcal{L}}_{(1)}{\mathcal{L}}_{(2)}{\mathcal{L}}_{(3)}\Delta_{(24)} + {\mathcal{L}}_{(1)} {\mathcal{L}}_{(2)}{\mathcal{L}}_{(4)}\Delta_{(13)}\Big) \,.$$

\vspace{8pt}
The curve class $0$ vertex is not reduced and yields the usual intersection form (which explains
the presence of diagonal $\Delta_{(ij)}$). The curve class $L$ vertex is reduced. We have applied
Proposition \ref{zzzr} to compute the push-forward to $\mathcal{X}^4_\Lambda$.
All terms are of relative codimension 5 (codimension 1 each for the factors $\mathcal{L}_{(i)}$
and codimension 2 for the diagonal~$\Delta_{(ij)}$).
The four unsplit terms (divided by $N_0(L)$) exactly 
constitute the principal part of Theorem \ref{WDVV}.

\subsection{WDVV relation: split contributions}

The split contributions are obtained
from non-trivial curve class distributions to the vertices
$$L = L_1 + L_2\, , \ \  L_1\,,\,L_2\neq 0\, .$$ 
By Proposition \ref{zzzr}, we need only consider distributions where {\it both} $L_1$
and $L_2$ are admissible classes.
Let $\ww\Lambda$ be the saturation{\footnote{We work only with primitive sublattices of $U^3 \oplus E_8(-1)^2$.}} of the span of $L_1$, $L_2$, and $\Lambda$.
There are two types.

\vspace{8pt}
\noindent \makebox[12pt][l]{$\bullet$}If $\text{rank}(\ww{\Lambda}) = \text{rank}(\Lambda) + 1$, the 
split contributions are pushed forward from $\mathcal{X}^4_{\ww{\Lambda}}$ via the map 
$\mathcal{X}^4_{\ww{\Lambda}} \to \mathcal{X}^4_\Lambda$.
Both vertices carry the reduced class by the obstruction calculation 
of \cite[Lemma 1]{MP}.
The split contributions are:
$$\left[\begin{tikzpicture}[baseline={([yshift=-.5ex]current bounding box.center)}]
	\node[leg] (l3) at (0,2) [label=above:$3$] {};
	\node[leg] (l4) at (1,2) [label=above:$4$] {};
	\node[circ] (v2) at (.5,1.5) [label=right:$0$] {$L_2$};
	\node[circ] (v1) at (.5,.5) [label=right:$0$] {$L_1$};
	\node[leg] (l1) at (0,0) [label=below:$1$] {};
	\node[leg] (l2) at (1,0) [label=below:$2$] {};
	\path
		(l3) edge (v2)
		(l4) edge (v2)
		(v2) edge (v1)
		(v1) edge (l1)
		(v1) edge (l2)
	;
\end{tikzpicture}\right]$$
\vspace{0pt}
$$N_0(L_1) N_0(L_2) \langle L_1, L_2\rangle_{\ww{\Lambda}} \cdot \mathcal{L}_{1,(1)}\mathcal{L}_{1,(2)}\mathcal{L}_{2,(3)}\mathcal{L}_{2,(4)}\, ,$$

$$- \left[\begin{tikzpicture}[baseline={([yshift=-.5ex]current bounding box.center)}]
	\node[leg] (l2) at (0,2) [label=above:$2$] {};
	\node[leg] (l4) at (1,2) [label=above:$4$] {};
	\node[circ] (v2) at (.5,1.5) [label=right:$0$] {$L_2$};
	\node[circ] (v1) at (.5,.5) [label=right:$0$] {$L_1$};
	\node[leg] (l1) at (0,0) [label=below:$1$] {};
	\node[leg] (l3) at (1,0) [label=below:$3$] {};
	\path
		(l2) edge (v2)
		(l4) edge (v2)
		(v2) edge (v1)
		(v1) edge (l1)
		(v1) edge (l3)
	;
\end{tikzpicture}\right]$$
\vspace{0pt}
$$-N_0(L_1) N_0(L_2) \langle L_1, L_2\rangle_{\ww{\Lambda}} \cdot \mathcal{L}_{1,(1)}\mathcal{L}_{1,(3)}\mathcal{L}_{2,(2)}\mathcal{L}_{2,(4)}\, .$$

\vspace{8pt}
\noindent All terms are of relative codimension 5 (codimension 1 for the Noether-Lefschetz
condition and codimension 1 each for the factors $\mathcal{L}_{a,(i)}$).

\vspace{8pt}
\noindent \makebox[12pt][l]{$\bullet$}If $\ww{\Lambda} = \Lambda$, there is no obstruction
cancellation as above. The extra reduction yields a factor of $-\lambda$.
The split contributions are:
$$\left[\begin{tikzpicture}[baseline={([yshift=-.5ex]current bounding box.center)}]
	\node[leg] (l3) at (0,2) [label=above:$3$] {};
	\node[leg] (l4) at (1,2) [label=above:$4$] {};
	\node[circ] (v2) at (.5,1.5) [label=right:$0$] {$L_2$};
	\node[circ] (v1) at (.5,.5) [label=right:$0$] {$L_1$};
	\node[leg] (l1) at (0,0) [label=below:$1$] {};
	\node[leg] (l2) at (1,0) [label=below:$2$] {};
	\path
		(l3) edge (v2)
		(l4) edge (v2)
		(v2) edge (v1)
		(v1) edge (l1)
		(v1) edge (l2)
	;
\end{tikzpicture}\right]$$
\vspace{0pt}
$$N_0(L_1) N_0(L_2) \langle L_1, L_2\rangle_{\ww{\Lambda}} \cdot (-\lambda)\mathcal{L}_{1,(1)}\mathcal{L}_{1,(2)}\mathcal{L}_{2,(3)}\mathcal{L}_{2,(4)}\, ,$$

$$- \left[\begin{tikzpicture}[baseline={([yshift=-.5ex]current bounding box.center)}]
	\node[leg] (l2) at (0,2) [label=above:$2$] {};
	\node[leg] (l4) at (1,2) [label=above:$4$] {};
	\node[circ] (v2) at (.5,1.5) [label=right:$0$] {$L_2$};
	\node[circ] (v1) at (.5,.5) [label=right:$0$] {$L_1$};
	\node[leg] (l1) at (0,0) [label=below:$1$] {};
	\node[leg] (l3) at (1,0) [label=below:$3$] {};
	\path
		(l2) edge (v2)
		(l4) edge (v2)
		(v2) edge (v1)
		(v1) edge (l1)
		(v1) edge (l3)
	;
\end{tikzpicture}\right]$$
\vspace{0pt}
$$-N_0(L_1) N_0(L_2) \langle L_1, L_2\rangle_{\ww{\Lambda}} \cdot (-\lambda)\mathcal{L}_{1,(1)}\mathcal{L}_{1,(3)}\mathcal{L}_{2,(2)}\mathcal{L}_{2,(4)}\, .$$

\vspace{8pt}
\noindent All terms are of relative codimension 5 (codimension 1 for $-\lambda$
 and codimension 1 each for the factors $\mathcal{L}_{a,(i)}$).

\subsection{Proof of Theorem \ref{wdvv}}
The complete exported relation \eqref{exex}
is obtained by adding the 
unsplit contributions to the summation over all
split contributions $$L=L_1+L_2$$ of both types.
Split contributions of the first type are explicitly supported over the
Noether-Lefschetz locus corresponding to 
$$\ww{\Lambda} \subset U^3 \oplus E_8^2\, .$$
Split contributions of the second type all contain the factor $-\lambda$.
The class $\lambda$ is known to be a linear combination of proper
Noether-Lefschetz divisors of $\mathcal{M}_\Lambda$ by \cite[Theorem~1.2]{BKPSB}.
Hence, we view the split contributions of the second type also as
being supported over Noether-Lefschetz loci. 
For the formula
of Theorem \ref{WDVV}, we normalize the relation
by dividing by $N_0(L)$.
\qed

\section{Proof of Theorem \ref{dldl}} \label{pfpf5}


\subsection{Overview}
Let $L \in \Lambda$ be an admissible class, and let $\MM_{0,0}(\pi_\Lambda, L)$ be the $\pi_\Lambda$-relative moduli space of genus 0 stable maps,
$$\phi : \MM_{0,0}(\pi_\Lambda, L) \to \mathcal{M}_\Lambda\,.$$
Let $\mathcal{X}_{\MM}$ be the universal $\Lambda$-polarized $K3$ surface over $\MM_{0,0}(\pi_\Lambda, L)$,
$$\pi_{\MM} : \mathcal{X}_{\MM} \to \MM_{0,0}(\pi_\Lambda, L) \,.$$
In Sections \ref{fafa2} and \ref{gwgw}, we have constructed two divisor classes
$$\w{\mathcal{L}} \,, \ \mathcal{L} \ \in \ \mathsf{A}^1(\mathcal{X}_{\MM}, \mathbb{Q}) \,.$$
We define the $\kappa$ classes with respect to $\w{\mathcal{L}}$ by
$$\w{\kappa}_{[L^a;b]} \,=\, \pi_{\MM*}\left(\w{\mathcal{L}}^a \cdot c_2(\mathcal{T}_{\pi_{\MM}})^b\right) \ \in \ \mathsf{A}^{a + 2b - 2}\left(\MM_{0,0}(\pi_\Lambda, L), \mathbb{Q}\right) \,.$$

Since $\w{\mathcal{L}}$ and $\mathcal{L}$
are equal on the fibers of $\pi_{\MM}$,
the difference $\w{\mathcal{L}} - \mathcal{L}$ is the pull-back{\footnote{We use here
the vanishing $H^1(X,\mathcal{O}_X)=0$
for $K3$ surfaces $X$ and the base change
theorem.}} of a divisor class in $\mathsf{A}^1\left(\MM_{0,0}(\pi_\Lambda, L), \mathbb{Q}\right)$. In fact,
the difference is equal{\footnote{We keep the same notation for the pull-backs of the $\kappa$ classes via the structure map $\phi$. Also, we identify $\AA\left(\MM_{0,0}(\pi_\Lambda, L), \mathbb{Q}\right)$ as a subring of $\AA(\mathcal{X}_{\MM}^n, \mathbb{Q})$ via $\pi_{\MM}^{n*}$.}} to 
$$\frac{1}{24} \,\cdot\, \left(\w{\kappa}_{[L;1]} - \kappa_{[L;1]}\right) \ \in \ \mathsf{A}^1\left(\MM_{0,0}(\pi_\Lambda, L), \mathbb{Q}\right) \,. $$
Therefore,
\begin{equation} \label{th51}
\w{\mathcal{L}} - \frac{1}{24}\,\cdot\,\w{\kappa}_{[L;1]} \,=\, \mathcal{L} - \frac{1}{24}\,\cdot\,\kappa_{[L;1]} \ \in \ \mathsf{A}^1(\mathcal{X}_{\MM}, \mathbb{Q}) \,.
\end{equation}

Our strategy for proving Theorem \ref{dldl} is to export the WDVV relation 
via the morphisms
$$\MM_{0,4}\ \stackrel{\tau}{\longleftarrow} \ \MM_{0,4}(\pi_\Lambda, L) \ 
\stackrel{\epsilon_{\MM}^4}{\longrightarrow}\  
\mathcal{X}^4_{\MM}\, .$$
We deduce the following identity from the exported relation
\begin{equation} \label{wwwu}
\epsilon_{\MM*}^4 \tau^*(\mathsf{WDVV}) \,=\, 0 \ \in\ \mathsf{A}_{\mathsf{d}(\Lambda) + 3}(\mathcal{X}^4_{\MM}, \mathbb{Q})\,,
\end{equation}
where $\mathsf{d}(\Lambda) = 20 - \text{rank}(\Lambda)$ is the dimension of $\mathcal{M}_\Lambda$. 

\begin{proposition} \label{blab}
For $L\in \Lambda$ admissible,
\begin{equation*} \label{th52}
\w{\kappa}_{[L;1]} \cap \left[\MM_{0,0}(\pi_\Lambda, L)\right]^{\textup{red}} \,=\, \kappa_{[L;1]} \cap \left[\MM_{0,0}(\pi_\Lambda, L)\right]^{\textup{red}} \ \in \ \mathsf{A}_{\mathsf{d}(\Lambda) - 1}\left(\MM_{0,0}(\pi_\Lambda, L), \mathbb{Q}\right)\, .
\end{equation*}
\end{proposition}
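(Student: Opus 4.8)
\emph{The plan} is to deduce Proposition \ref{blab} from the exported relation \eqref{wwwu} by a single push-forward to $\MM_{0,0}(\pi_\Lambda,L)$. First I would write \eqref{wwwu} out explicitly, applying the splitting axiom to the two graphs of \eqref{wdvv} exactly as in Section \ref{wwww}, but with the evaluation lifted to $\mathcal{X}_\MM$. The class-$0$ vertices contribute the relative diagonals and the reduced class-$L$ vertex contributes factors of $\w{\mathcal{L}}$; this is the $n=4$ form of the lifted identity $\epsilon^n_{\MM*}[\MM_{0,n}(\pi_\Lambda,L)]^{\text{red}}=\w{\mathcal{L}}_{(1)}\cdots\w{\mathcal{L}}_{(n)}\cap[\mathcal{X}^n_\MM]^{\text{red}}$ used in the proof of Proposition \ref{zzzr}. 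Crucially, computing \eqref{wwwu} uses only the lifted evaluation and \emph{not} Theorem \ref{dldl}, so there is no circularity. Thus the principal part of \eqref{wwwu} is the $\w{\mathcal{L}}$-analogue of the left side of (\dag) capped with $[\mathcal{X}^4_\MM]^{\text{red}}$, while the remaining split terms are computed exactly as in Section \ref{wwww} and are supported over proper Noether-Lefschetz loci of $\mathcal{M}_\Lambda$ or carry a factor of $-\lambda$.

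The main step is to apply the operation $\pi^4_{\MM*}\big(\mathcal{L}_{(1)}\mathcal{L}_{(2)}\Delta_{(34)}\cdot(-)\big)$, where $\pi^4_\MM:\mathcal{X}^4_\MM\to\MM_{0,0}(\pi_\Lambda,L)$ is the structure map, which sends $\mathsf{A}_{\mathsf{d}(\Lambda)+3}(\mathcal{X}^4_\MM)$ to $\mathsf{A}_{\mathsf{d}(\Lambda)-1}(\MM_{0,0}(\pi_\Lambda,L))$. On each principal term the inserted $\Delta_{(34)}$ meets the diagonal of that term in a self-intersection, producing the Euler class $c_2(\mathcal{T}_{\pi_\MM})$ along the diagonal, while the two factors $\mathcal{L}$ absorb two of the three $\w{\mathcal{L}}$'s through $\pi_{\MM*}(\mathcal{L}\cdot\w{\mathcal{L}})=\langle L,L\rangle_\Lambda$. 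After push-forward the principal part collapses to a universal combination, with coefficients polynomial in $\langle L,L\rangle_\Lambda$, of $\pi_{\MM*}(\w{\mathcal{L}}\cdot c_2(\mathcal{T}_{\pi_\MM}))=\w{\kappa}_{[L;1]}$ and of $\kappa$-classes pulled back from $\mathcal{M}_\Lambda$ (such as $\kappa_{[L;1]}$ and $\kappa_{[L^3;0]}$), all capped with $[\MM_{0,0}(\pi_\Lambda,L)]^{\text{red}}$. Feeding in the identity $\w{\mathcal{L}}-\mathcal{L}=\tfrac{1}{24}\big(\w{\kappa}_{[L;1]}-\kappa_{[L;1]}\big)$ from the paragraph preceding \eqref{th51}, this combination reorganizes so that the only term not pulled back from $\mathcal{M}_\Lambda$ is a multiple, with coefficient polynomial in $\langle L,L\rangle_\Lambda$, of $\big(\w{\kappa}_{[L;1]}-\kappa_{[L;1]}\big)\cap[\MM_{0,0}(\pi_\Lambda,L)]^{\text{red}}$.

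To conclude I would check that the $\mathcal{M}_\Lambda$-pulled-back $\kappa$-terms in the principal part are exactly cancelled by the push-forward of the split terms. The split terms are supported, after the operation, over the $\phi$-preimages of proper Noether-Lefschetz loci of $\mathcal{M}_\Lambda$; I would argue either that the operation sends them into classes supported in codimension $\ge 2$ in $\MM_{0,0}(\pi_\Lambda,L)$ (so that they vanish in $\mathsf{A}_{\mathsf{d}(\Lambda)-1}$), or that they combine with the pulled-back $\kappa$-terms so that their sum is zero. Dividing by the leading coefficient, which is nonzero when $\langle L,L\rangle_\Lambda\neq 0$, then gives $\big(\w{\kappa}_{[L;1]}-\kappa_{[L;1]}\big)\cap[\MM_{0,0}(\pi_\Lambda,L)]^{\text{red}}=0$, which is Proposition \ref{blab}.

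The hard part will be exactly this control of the split terms over $\MM_{0,0}(\pi_\Lambda,L)$ and their cancellation against the pulled-back $\kappa$-classes: this is the only place where $\w{\mathcal{L}}$ and $\mathcal{L}$ can differ against the reduced class, so it carries the full content of the proposition and is essentially as delicate as the statement itself. I also expect the degenerate values of $\langle L,L\rangle_\Lambda$ to need separate, easier treatment: since any extraction landing in $\mathsf{A}_{\mathsf{d}(\Lambda)-1}(\MM_{0,0}(\pi_\Lambda,L))$ must integrate out all fiber directions and hence produces a coefficient that is a power of $\langle L,L\rangle_\Lambda$, the case $\langle L,L\rangle_\Lambda=0$ is not reached by this argument and would be handled by specialization from classes of positive square, while for $\langle L,L\rangle_\Lambda=-2$ the class $L$ is that of a unique rigid rational curve, $\MM_{0,0}(\pi_\Lambda,L)\cong\mathcal{M}_\Lambda$, and $\w{\mathcal{L}}=\mathcal{L}$ already on $\mathcal{X}_\MM$.
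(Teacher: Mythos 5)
Your setup is sound and is in fact the paper's own: you export WDVV over $\MM_{0,0}(\pi_\Lambda,L)$ via the lifted evaluation map (correctly noting there is no circularity with Theorem \ref{dldl}), cap with a codimension $4$ insertion, and push down by $\pi^4_\MM$. The genuine gap is your concluding step. After the push-down, the relation has the shape
$$c\,\big(\w{\kappa}_{[L;1]}-\kappa_{[L;1]}\big)\cap\left[\MM_{0,0}(\pi_\Lambda,L)\right]^{\textup{red}} \,+\, \phi^*(\text{divisor classes on } \mathcal{M}_\Lambda)\cap\left[\MM_{0,0}(\pi_\Lambda,L)\right]^{\textup{red}} \,=\, 0\,,$$
and you propose to finish by showing that the second group (pulled-back $\kappa$ divisors plus the pushed-down split terms) vanishes identically. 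Neither of your suggested mechanisms works: the split terms are supported over Noether-Lefschetz \emph{divisors}, i.e.\ in codimension $1$, not $\geq 2$, so they do not die for dimension reasons; and the exact-cancellation claim is, as you yourself concede, equivalent to the proposition (modulo the proposition it amounts to $\langle L,L\rangle_\Lambda\kappa_{[L;1]}-\kappa_{[L^3;0]}\in\mathsf{NL}^1(\mathcal{M}_\Lambda)$, whose proof in the paper passes through Theorem \ref{WDVV}, hence through Proposition \ref{zzzr} and Theorem \ref{dldl} --- circular). The paper's key maneuver, which your proposal is missing, is that \emph{no cancellation is needed}: since every residual term is $\phi^*(\cdot)\cap[\cdots]^{\textup{red}}$ for some (possibly unknown) divisor class, one concludes only that $\w{\kappa}_{[L;1]}\cap[\cdots]^{\textup{red}}=\phi^*(D)\cap[\cdots]^{\textup{red}}$ for \emph{some} $D\in\mathsf{A}^1(\mathcal{M}_\Lambda,\mathbb{Q})$, and then $D$ is identified a posteriori by applying $\phi_*$ and the projection formula: $\phi_*\big(\w{\kappa}_{[L;1]}\cap[\cdots]^{\textup{red}}\big)=N_0(L)\,\kappa_{[L;1]}$ while $\phi_*\big(\phi^*(D)\cap[\cdots]^{\textup{red}}\big)=N_0(L)\,D$, forcing $D=\kappa_{[L;1]}$. (The paper isolates $\w{\kappa}_{[L;1]}$ using two relations --- the algebraic identity $\langle L,L\rangle_\Lambda\w{\kappa}_{[L;1]}-8\w{\kappa}_{[L^3;0]}=\langle L,L\rangle_\Lambda\kappa_{[L;1]}-8\kappa_{[L^3;0]}$ derived from \eqref{th51}, and the exported WDVV with insertion $\Delta_{(12)}\Delta_{(34)}$; your single mixed insertion $\mathcal{L}_{(1)}\mathcal{L}_{(2)}\Delta_{(34)}$ could serve the same isolating purpose, so that part of your plan is fine.)

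Your treatment of the degenerate case $\langle L,L\rangle_\Lambda=0$ is also not viable: $L$ is a fixed element of the fixed lattice $\Lambda$, so there is no ``specialization from classes of positive square.'' The paper instead brings in the quasi-polarization: it works with the mixed classes $\w{\kappa}_{[H,L^2;0]}$ and the insertion $\mathcal{H}_{(1)}\mathcal{H}_{(2)}\Delta_{(34)}$, and the nonvanishing $\langle H,L\rangle_\Lambda\neq 0$ (Hodge index theorem) replaces the nonvanishing of $\langle L,L\rangle_\Lambda$ --- your ``all coefficients are powers of $\langle L,L\rangle_\Lambda$'' heuristic fails precisely because $\mathcal{H}$-insertions are available. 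Finally, the separate discussion of $\langle L,L\rangle_\Lambda=-2$ is unnecessary (any argument valid for $\langle L,L\rangle_\Lambda\neq 0$ covers it), and your claim that $\MM_{0,0}(\pi_\Lambda,L)\cong\mathcal{M}_\Lambda$ with $\w{\mathcal{L}}=\mathcal{L}$ holds at best over the locus where $L$ is represented by an irreducible $(-2)$-curve, not globally.
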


Equation \eqref{th51} and 
Proposition \ref{blab} together yield
$$\w{\mathcal{L}}\cap [\mathcal{X}_\MM]^{\textup{red}}
\, =\, \mathcal{L} \cap [\mathcal{X}_\MM]^{\textup{red}}\ \in\  \mathsf{A}_{\mathsf{d}(\Lambda)+1}(\mathcal{X}_\MM,\mathbb{Q})\, ,$$
thus proving Theorem \ref{dldl}.

The exportation process is almost identical to the one in Section \ref{wwww}. However, since we work over $\MM_{0,0}(\pi_\Lambda, L)$ instead of $\mathcal{M}_\Lambda$, we do {\it not} require Proposition \ref{zzzr} (whose proof uses Theorem \ref{dldl}).

\subsection{Exportation}
We briefly describe the exportation \eqref{wwwu} of the WDVV relation with respect to the curve class $L$. As in Section \ref{wwww}, the outcome of $\epsilon_{\MM*}^4\tau^*(\mathsf{WDVV})$ consists of unsplit and split contributions:

\vspace{8pt}
\noindent \makebox[12pt][l]{$\bullet$}For the unsplit contributions, the difference is that one should replace $\mathcal{L}$ by the corresponding $\w{\mathcal{L}}$. Moreover, since we do {\it not} push-forward to~$\mathcal{X}^4_\Lambda$, there is no overall coefficient~$N_0(L)$.

\vspace{8pt}
\noindent \makebox[12pt][l]{$\bullet$}For the split contributions corresponding to the admissible curve class distributions 
$$L = L_1 + L_2 \,,$$
one again replaces $\mathcal{L}_i$ by the corresponding $\w{\mathcal{L}}_i$ and removes the coefficient $N_0(L_i)$. As before, the terms are either supported over proper Noether-Lefschetz divisors of $\mathcal{M}_\Lambda$, or multiplied by (the pull-back of) $-\lambda$.

\vspace{8pt}
\noindent We obtain the following analog of Theorem \ref{WDVV}.

\pagebreak

\begin{proposition}
For admissible $L\in \Lambda$,
exportation of the WDVV relation yields
\begin{multline} \label{dada}
\Big(\w{\mathcal{L}}_{(1)} \w{\mathcal{L}}_{(2)} \w{\mathcal{L}}_{(3)} \Delta_{(34)} + \w{\mathcal{L}}_{(1)}\w{\mathcal{L}}_{(3)} \w{\mathcal{L}}_{(4)} \Delta_{(12)} - \w{\mathcal{L}}_{(1)}\w{\mathcal{L}}_{(2)}\w{\mathcal{L}}_{(3)}\Delta_{(24)} \\
- \w{\mathcal{L}}_{(1)}\w{\mathcal{L}}_{(2)}\w{\mathcal{L}}_{(4)}\Delta_{(13)} + \ldots\Big) \cap [\mathcal{X}_\MM^4]^{\textup{red}}
\, = \, 0 \ \in \ \mathsf{A}_{\mathsf{d}(\Lambda) + 3}(\mathcal{X}_{\MM}^4, \mathbb{Q})\,,
\end{multline}
where the dots stand for (Gromov-Witten) tautological classes supported over proper Noether-Lefschetz divisors
of $\mathcal{M}_\Lambda$.
\end{proposition}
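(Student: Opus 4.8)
The plan is to run the exportation argument of Section \ref{wwww} essentially verbatim, but over the base $\MM_{0,0}(\pi_\Lambda, L)$ instead of over $\mathcal{M}_\Lambda$, and with every occurrence of $\mathcal{L}$ replaced by the class $\w{\mathcal{L}}$ attached to the lifted evaluation. First I would record the starting point: the WDVV relation \eqref{wdvv} vanishes in $\mathsf{A}^1(\MM_{0,4}, \mathbb{Q})$, so pulling back by $\tau^*$ along $\MM_{0,4}(\pi_\Lambda, L) \to \MM_{0,4}$ and pushing forward by the lifted evaluation $\epsilon_{\MM}^4$ yields \eqref{wwwu}, namely $\epsilon_{\MM*}^4 \tau^*(\mathsf{WDVV}) = 0$ in $\mathsf{A}_{\mathsf{d}(\Lambda)+3}(\mathcal{X}^4_\MM, \mathbb{Q})$. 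It then remains to evaluate the left-hand side by the splitting axiom, distributing the curve class $L$ to the two vertices of each WDVV graph.

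The key computation I would isolate is the analog of the first two equalities in the proof of Proposition \ref{zzzr}, now for the lifted evaluation $\epsilon^n_\MM : \MM_{0,n}(\pi_\Lambda, L) \to \mathcal{X}^n_\MM$:
$$\epsilon^n_{\MM*}\left[\MM_{0,n}(\pi_\Lambda, L)\right]^{\textup{red}} \,=\, \w{\mathcal{L}}_{(1)} \cdots \w{\mathcal{L}}_{(n)} \cap [\mathcal{X}^n_\MM]^{\textup{red}} \,.$$
This is exactly the content of the definition of $\w{\mathcal{L}}$ as the Hilbert-Chow image class of the evaluation from the universal curve, and — crucially — it invokes \emph{only} the definitional step, not Theorem \ref{dldl}. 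Recall that in the proof of Proposition \ref{zzzr} it is the subsequent replacement of $\w{\mathcal{L}}$ by $\mathcal{L}$ that uses Theorem \ref{dldl}; since here we never make that replacement, the present argument remains free of circularity and may legitimately feed back into the proof of Theorem \ref{dldl}.

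With this identity in hand, the unsplit contributions come from the trivial distribution (one vertex carrying $L$, the other carrying $0$) on each of the two graphs. The degree-$0$ vertex is a constant map and contributes the diagonal $\Delta_{(ij)}$ forcing its marked points and node to coincide, while the degree-$L$ vertex carries the reduced class and contributes the $\w{\mathcal{L}}$ factors via the displayed identity; gluing along the node identifies the node evaluation with the diagonal point, producing the four principal terms of \eqref{dada}. Because we do not push forward further to $\mathcal{X}^4_\Lambda$, no overall coefficient $N_0(L)$ appears. For the split contributions $L = L_1 + L_2$ with $L_1, L_2 \neq 0$, I would argue exactly as in Section \ref{wwww}: letting $\ww\Lambda$ be the saturation of the span of $L_1, L_2, \Lambda$, a rank increase $\text{rank}(\ww\Lambda) = \text{rank}(\Lambda)+1$ places the contribution over the Noether-Lefschetz locus $\mathcal{M}_{\ww\Lambda}$ (with both vertices reduced by the obstruction cancellation of \cite[Lemma 1]{MP}), whereas $\ww\Lambda = \Lambda$ forces an extra reduction and hence a factor $-\lambda$, a combination of proper Noether-Lefschetz divisors by \cite[Theorem 1.2]{BKPSB}. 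In both cases these terms land in the dots, and capping everything against $[\mathcal{X}^4_\MM]^{\textup{red}}$ gives \eqref{dada}.

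The step I expect to be the main obstacle is establishing the key pushforward identity for $\epsilon^n_\MM$ cleanly and certifying that it is genuinely independent of Theorem \ref{dldl}; getting this right is what prevents the overall argument from being circular. A secondary point demanding care is the bookkeeping of the splitting axiom over the new base $\MM_{0,0}(\pi_\Lambda, L)$: one must verify that the diagonal and node-gluing contributions behave identically to the $\mathcal{M}_\Lambda$ case — that nothing extra enters merely because the base is now the moduli of stable maps — and that every term remains compatible with the cap product against the reduced virtual class $[\mathcal{X}^4_\MM]^{\textup{red}}$.
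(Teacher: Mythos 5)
Your proposal is correct and takes essentially the same route as the paper: the paper likewise runs the Section~\ref{wwww} exportation over $\MM_{0,0}(\pi_\Lambda, L)$, replacing $\mathcal{L}$ by $\w{\mathcal{L}}$ via the definitional push-forward identity (the first two equalities in the proof of Proposition~\ref{zzzr}, which do not invoke Theorem~\ref{dldl}), dropping the overall factor $N_0(L)$ since there is no push-forward to $\mathcal{X}^4_\Lambda$, and placing all split contributions either over proper Noether-Lefschetz divisors or against $-\lambda$. Your explicit isolation of the non-circularity point — that only the subsequent replacement of $\w{\mathcal{L}}$ by $\mathcal{L}$ in Proposition~\ref{zzzr} uses Theorem~\ref{dldl} — is precisely the remark the paper makes before stating the proposition.
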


\noindent Here, the Gromov-Witten tautological classes on $\mathcal{X}^n_{\MM}$ are defined by replacing $\mathcal{L}$ by $\w{\mathcal{L}}$ in Section \ref{ttun}.

\subsection{Proof of Proposition \ref{blab}}
We distinguish two cases.

\vspace{8pt}
\noindent {\bf Case $\langle L, L\rangle_\Lambda \neq 0$.} \nopagebreak

\vspace{8pt}
First, we rewrite  \eqref{th51} as
$$\w{\kappa}_{[L;1]} - \kappa_{[L;1]} \,=\, 24 \cdot (\w{\mathcal{L}} - \mathcal{L}) \ \in \ \mathsf{A}^1(\mathcal{X}_{\MM}, \mathbb{Q}) \,.$$
By the same argument, we also have
$$\w{\kappa}_{[L^3;0]} - \kappa_{[L^3;0]} \,=\, 3 \langle L, L\rangle_\Lambda \cdot (\w{\mathcal{L}} - \mathcal{L}) \ \in \ \mathsf{A}^1(\mathcal{X}_{\MM}, \mathbb{Q}) \,.$$
By combining the above equations, we find
\begin{equation} \label{mir1}
\langle L, L\rangle_\Lambda \cdot \w{\kappa}_{[L;1]} - 8 \cdot \w{\kappa}_{[L^3;0]} \,=\, \langle L, L\rangle_\Lambda \cdot {\kappa}_{[L;1]} - 8 \cdot {\kappa}_{[L^3;0]} \ \in \ \mathsf{A}^1\left(\MM_{0,0}(\pi_\Lambda, L), \mathbb{Q}\right)\,.
\end{equation}

Next, we apply \eqref{dada} with respect to $L$ and insert $\Delta_{(12)}\Delta_{(34)} \in \mathsf{A}^4(\mathcal{X}_\MM^4,\mathbb{Q})$. The relation
$$\Delta_{(12)}\Delta_{(34)} \cap \epsilon^4_{\MM*}\tau^*(\mathsf{WDVV}) \,=\, 0 \ \in \ \mathsf{A}_{\mathsf{d}(\Lambda) - 1}(\mathcal{X}^4_\MM, \mathbb{Q})$$
pushes down via
$$\pi^4_\MM: \mathcal{X}_{\MM}^4 \rightarrow \MM_{0,0}(\pi_\Lambda, L)$$
to yield the result
\begin{multline} \label{mir2}
\Big(2\langle L, L\rangle_\Lambda \cdot \w{\kappa}_{[L;1]} - 2 \cdot \w{\kappa}_{[L^3;0]}\Big) \cap \left[\MM_{0,0}(\pi_\Lambda, L)\right]^{\text{red}} \\
\in \ \phi^*\,\mathsf{NL}^1(\mathcal{M}_\Lambda, \mathbb{Q}) \cap \left[\MM_{0,0}(\pi_\Lambda, L)\right]^{\text{red}}\,.
\end{multline}

Since $\langle L, L\rangle_\Lambda \neq 0$, a combination of \eqref{mir1} and \eqref{mir2} yields
$$\w{\kappa}_{[L;1]} \cap \left[\MM_{0,0}(\pi_\Lambda, L)\right]^{\text{red}} \ \in \ \phi^*\,\mathsf{A}^1(\mathcal{M}_\Lambda, \mathbb{Q}) \cap \left[\MM_{0,0}(\pi_\Lambda, L)\right]^{\text{red}}\,.$$
In other words, there is a divisor class ${D} \in \mathsf{A}^1(\mathcal{M}_\Lambda, \mathbb{Q})$ for
which
\begin{equation*}
\w{\kappa}_{[L;1]} \cap \left[\MM_{0,0}(\pi_\Lambda, L)\right]^{\text{red}} \,=\, \phi^*({D}) \cap \left[\MM_{0,0}(\pi_\Lambda, L)\right]^{\text{red}} \ \in \ \mathsf{A}_{\mathsf{d}(\Lambda) - 1}\left(\MM_{0,0}(\pi_\Lambda, L), \mathbb{Q}\right)\,.
\end{equation*}
Then, by the projection formula, we find
$$\phi_*\left(\w{\kappa}_{[L;1]} \cap \left[\MM_{0,0}(\pi_\Lambda, L)\right]^{\text{red}}\right) \,=\, N_0(L) \cdot \kappa_{[L;1]} \,=\, N_0(L) \cdot {D} \ \in \ \mathsf{A}^1(\mathcal{M}_\Lambda, \mathbb{Q}) \,.$$
Hence ${D} = \kappa_{[L;1]}$, 
which proves Proposition \ref{blab} in case
$\langle L, L\rangle_\Lambda \neq 0$.

\vspace{8pt}
\noindent {\bf Case $\langle L, L\rangle_\Lambda = 0$.} \nopagebreak

\vspace{8pt}
Let $H \in \Lambda$ be the quasi-polarization and let
$$\mathcal{H} \ \in \ \mathsf{A}^1(\mathcal{X}_{\MM}, \mathbb{Q})$$
be the pull-back of the class $\mathcal{H} \in \mathsf{A}^1(\mathcal{X}_\Lambda, \mathbb{Q})$. We define the $\kappa$ classes
$$\w{\kappa}_{[H^{a_1},L^{a_2};b]} \,=\, \pi_{\MM*}\left(\mathcal{H}^{a_1} \cdot \w{\mathcal{L}}^{a_2} \cdot c_2(\mathcal{T}_{\pi_{\MM}})^b\right) \ \in \ \mathsf{A}^{a_1 + a_2 + 2b - 2}\left(\MM_{0,0}(\pi_\Lambda, L), \mathbb{Q}\right) \,.$$

First, by the same argument used to prove \eqref{th51}, we have
$$\w{\kappa}_{[H,L^2;0]} - {\kappa}_{[H,L^2;0]} \,=\, 2\langle H, L\rangle_\Lambda \cdot (\w{\mathcal{L}} - \mathcal{L}) \ \in \ \mathsf{A}^1(\mathcal{X}_{\MM}, \mathbb{Q}) \,.$$
By combining the above equation with \eqref{th51}, we find
\begin{multline} \label{mir3}
\langle H, L\rangle_\Lambda \cdot \w{\kappa}_{[L;1]} - 12 \cdot \w{\kappa}_{[H,L^2;0]} \\ =\, \langle H, L\rangle_\Lambda \cdot {\kappa}_{[L;1]} - 12 \cdot {\kappa}_{[H,L^2;0]} \ \in \ \mathsf{A}^1\left(\MM_{0,0}(\pi_\Lambda, L), \mathbb{Q}\right) \,.
\end{multline}

Next, we apply \eqref{dada} with respect to $L$ and insert $\mathcal{H}_{(1)}\mathcal{H}_{(2)}\Delta_{(34)} \in \mathsf{A}^4(\mathcal{X}_\MM^4,\mathbb{Q})$. The relation
$$\mathcal{H}_{(1)}\mathcal{H}_{(2)}\Delta_{(34)} \cap \epsilon^4_{\MM*}\tau^*(\mathsf{WDVV}) \,=\, 0 \ \in\  \mathsf{A}_{\mathsf{d}(\Lambda) - 1}(\mathcal{X}^4_\MM, \mathbb{Q})$$
pushes down via $\pi^4_\MM$ to yield the result
\begin{multline} \label{mir4}
\Big(\langle H, L\rangle_\Lambda^2 \cdot \w{\kappa}_{[L;1]} - 2\langle H, L\rangle_\Lambda \cdot \w{\kappa}_{[H,L^2;0]}\Big) \cap \left[\MM_{0,0}(\pi_\Lambda, L)\right]^{\text{red}} \\
\in \ \phi^*\,\mathsf{NL}^1(\mathcal{M}_\Lambda, \mathbb{Q}) \cap \left[\MM_{0,0}(\pi_\Lambda, L)\right]^{\text{red}}\,.
\end{multline}

Since $\langle H, L\rangle_\Lambda \neq 0$ by the Hodge index theorem, a combination of \eqref{mir3} and \eqref{mir4} yields
$$\w{\kappa}_{[L;1]} \cap \left[\MM_{0,0}(\pi_\Lambda, L)\right]^{\text{red}} \ \in \ \phi^*\,\mathsf{A}^1(\mathcal{M}_\Lambda, \mathbb{Q}) \cap \left[\MM_{0,0}(\pi_\Lambda, L)\right]^{\text{red}}\,.$$
As in the previous case, we conclude
$$\w{\kappa}_{[L;1]} \cap \left[\MM_{0,0}(\pi_\Lambda, L)\right]^{\text{red}} \,=\, \kappa_{[L;1]} \cap \left[\MM_{0,0}(\pi_\Lambda, L)\right]^{\text{red}} \ \in \ \mathsf{A}_{\mathsf{d}(\Lambda) - 1}\left(\MM_{0,0}(\pi_\Lambda, L), \mathbb{Q}\right)\,.$$
The proof of Proposition \ref{blab} (and thus Theorem \ref{dldl}) is complete. \qed

\section{Exportation of Getzler's relation} \label{gggg}

\subsection{Exportation} 
Let $L \in \Lambda$ be an admissible class satisfying
$\langle L,L\rangle_\Lambda\geq 0$. 
Consider the
morphisms
$$\MM_{1,4}\ \stackrel{\tau}{\longleftarrow} \ \MM_{1,4}(\pi_\Lambda, L) \ 
\stackrel{\epsilon^4}{\longrightarrow}\  
\mathcal{X}^4_{\Lambda}\, .$$
Following the notation of Section \ref{conjs}, we export here Getzler's relation with respect to 
the curve class $L$,
\begin{equation}\label{exex1}
\epsilon_*^4 \tau^*(\mathsf{Getzler}) \,=\, 0 \ \in\ \mathsf{A}^5
(\mathcal{X}^4_\Lambda, \mathbb{Q})\, .
\end{equation}
We will compute $\epsilon_*^4 \tau^*(\mathsf{Getzler})$
by applying the splitting
axiom of Gromov-Witten theory to the 7 terms of Getzler's relation 
\eqref{getzler}.
The splitting axiom requires a distribution of the curve class
to each vertex of each graph appearing in \eqref{getzler}.

\subsection{Curve class distributions} 
To export Getzler's relation with respect to the curve class $L$, we will use the following properties for the graphs which arise:
\begin{itemize}
\item[(i)] Only distributions of admissible classes contribute.
\item[(ii)] A genus 1 vertex with valence{\footnote{The valence counts all
incident half-edges (both from edges and markings).}} 2 or a genus 0 vertex with valence at least 4 must carry a nonzero class.
\item[(iii)] A genus 1 vertex with valence 1 cannot be adjacent to a genus 0 vertex with a nonzero class.
\item[(iv)] A genus 1 vertex with valence 2 cannot be adjacent to two genus 0 vertices with nonzero classes.
\end{itemize}
Property (i) is a consequence of Propositions \ref{zzzr}, \ref{g1p1}, and \ref{g1g1}.
For Property (ii), the moduli of contracted 2-pointed genus 1 curve
produces a positive dimensional fiber of the push-forward to
$\mathcal{X}_\Lambda^4$ (and similarly for 
contracted 4-point genus 0 curves). Properties~(iii) and~(iv) are
 consequences of  positive dimensional fibers of
the push-forward to $\mathcal{X}_\Lambda^4$ obtained
from the elliptic component. We leave the elementary details
to the reader.

\subsection{Getzler's relation: unsplit contributions} \label{gusp}
We begin with the unsplit contributions. The strata appearing in Getzler's relation are ordered as in \eqref{getzler}.

\vspace{8pt}
\noindent {\bf Stratum 1.}
$$12\left[\ \begin{tikzpicture}[baseline={([yshift=-.5ex]current bounding box.center)}]
	\node[leg] (l3) at (0,3) {};
	\node[leg] (l4) at (1,3) {};
	\node[vertex] (v3) at (.5,2.5) [label=right:$0$] {};
	\node[circ] (v2) at (.5,1.5) [label=right:$1$] {$L$};
	\node[vertex] (v1) at (.5,.5) [label=right:$0$] {};
	\node[leg] (l1) at (0,0) {};
	\node[leg] (l2) at (1,0) {};
	\path
		(l3) edge (v3)
		(l4) edge (v3)
		(v3) edge (v2)
		(v2) edge (v1)
		(v1) edge (l1)
		(v1) edge (l2)
	;
\end{tikzpicture}\!\right]$$
\begin{multline*}
12 N_1(L) \cdot \Big(\mathcal{L}_{(1)}\Delta_{(12)}\Delta_{(34)} + \mathcal{L}_{(3)}\Delta_{(12)}\Delta_{(34)} + \mathcal{L}_{(1)}\Delta_{(13)}\Delta_{(24)} \\
+ \mathcal{L}_{(2)}\Delta_{(13)}\Delta_{(24)} + \mathcal{L}_{(1)}\Delta_{(14)}\Delta_{(23)} + \mathcal{L}_{(2)}\Delta_{(14)}\Delta_{(23)}\Big) \\
+ 12 N_1(L) \cdot Z(L) \Big(\Delta_{(12)}\Delta_{(34)} + \Delta_{(13)}\Delta_{(24)} + \Delta_{(14)}\Delta_{(23)}\Big)
\end{multline*}

\vspace{8pt}
\noindent By Property (ii), the genus 1 vertex must carry the curve class
$L$ in the unsplit case. The contribution is then calculated
using Propositions \ref{zzzr} and \ref{g1g1}.

\vspace{8pt}
\noindent {\bf Stratum 2.}
$$-4\left[\ \begin{tikzpicture}[baseline={([yshift=-.5ex]current bounding box.center)}]
	\node[leg] (l3) at (0,3) {};
	\node[leg] (l4) at (1,3) {};
	\node[vertex] (v3) at (.5,2.5) [label=right:$0$] {};
	\node[leg] (l2) at (0,1.5) {};
	\node[vertex] (v2) at (.5,1.5) [label=right:$0$] {};
	\node[circ] (v1) at (.5,.5) [label=right:$1$] {$L$};
	\node[leg] (l1) at (0,0) {};
	\path
		(l3) edge (v3)
		(l4) edge (v3)
		(v3) edge (v2)
		(l2) edge (v2)
		(v2) edge (v1)
		(v1) edge (l1)
	;
\end{tikzpicture}\!\right]$$
\begin{multline*}
{-12}N_1(L) \cdot \Big(\mathcal{L}_{(1)}\Delta_{(234)} + \mathcal{L}_{(2)}\Delta_{(134)} + \mathcal{L}_{(3)}\Delta_{(124)} + \mathcal{L}_{(4)}\Delta_{(123)} \\
+ \mathcal{L}_{(1)}\Delta_{(123)} + \mathcal{L}_{(1)}\Delta_{(124)} + \mathcal{L}_{(1)}\Delta_{(134)} + \mathcal{L}_{(2)}\Delta_{(234)}\Big) \\
-12 N_1(L) \cdot Z(L) \Big(\Delta_{(123)} + \Delta_{(124)} + \Delta_{(134)} + \Delta_{(234)}\Big)
\end{multline*}

\vspace{8pt}
\noindent Again by Property (ii), the genus 1 vertex must carry the curve class
$L$ in the unsplit case. The contribution is then calculated
using Propositions \ref{zzzr} and \ref{g1g1}.

\vspace{8pt}
\noindent {\bf Stratum 3.} No contribution by Properties (ii) and (iii).

\vspace{8pt}
\noindent {\bf Stratum 4.}
$$6\left[\ \begin{tikzpicture}[baseline={([yshift=-.3ex]current bounding box.center)}]
	\node[leg] (l2) at (0,2.5) {};
	\node[leg] (l3) at (.5,2.5) {};
	\node[leg] (l4) at (1,2.5) {};
	\node[circ] (v3) at (.5,2) [label=right:$0$] {$L$};
	\node[leg] (l1) at (0,1) {};
	\node[vertex] (v2) at (.5,1) [label=right:$0$] {};
	\node[vertex] (v1) at (.5,0) [label=right:$1$] {};
	\path
	    (l2) edge (v3)
		(l3) edge (v3)
		(l4) edge (v3)
		(v3) edge (v2)
		(l1) edge (v2)
		(v2) edge (v1)
	;
\end{tikzpicture}\!\right]$$
\vspace{0pt}
$$N_0(L) \cdot \lambda \mathcal{L}_{(1)}\mathcal{L}_{(2)}\mathcal{L}_{(3)}\mathcal{L}_{(4)}$$

\vspace{8pt}
\noindent The genus 0 vertex of valence 4 must carry the curve class $L$ in the
unsplit case.
The contracted genus 1 vertex contributes the virtual class
\begin{equation}\label{lwwl}
\epsilon_*[\overline{\mathsf{M}}_{1,1}(\pi_\Lambda,0)]^{\text{vir}} \,=\, \frac{1}{24}\,\cdot\,\lambda
\ \in \ \mathsf{A}^1(\mathcal{X}^1_\Lambda,\mathbb{Q})\, .
\end{equation}
The coefficient 6 together with the 4 graphs which occur cancel
the 24 in the denominator of \eqref{lwwl}. Proposition \ref{zzzr}
is then applied to the  genus 0 vertex of valence 4.

\vspace{8pt}
\noindent {\bf Stratum 5.} No contribution by Property (ii) since there
are two genus 0 vertices of valence 4.

\vspace{8pt}
\noindent {\bf Stratum 6.}
$$\left[\begin{tikzpicture}[baseline={([yshift=-.3ex]current bounding box.center)}]
	\node[leg] (l1) at (0,1.5) {};
	\node[leg] (l2) at (.33,1.5) {};
	\node[leg] (l3) at (.67,1.5) {};
	\node[leg] (l4) at (1,1.5) {};
	\node[circ] (v2) at (.5,1) [label=right:$0$] {$L$};
	\node[vertex] (v1) at (.5,0) [label=right:$0$] {};
	\path
        (l1) edge (v2)
        (l2) edge (v2)
		(l3) edge (v2)
		(l4) edge (v2)
		(v2) edge (v1)
		(v1) edge[in=-135,out=-45,loop] (v1)
	;
\end{tikzpicture}\!\right]$$
$$\frac{1}{2}N_0(L) \cdot \kappa_{[L;1]} \mathcal{L}_{(1)}\mathcal{L}_{(2)}\mathcal{L}_{(3)}\mathcal{L}_{(4)}$$

\vspace{8pt}
\noindent The genus 0 vertex of valence 4 must carry the curve class $L$ in the unsplit case.
Proposition \ref{zzzr}
is applied to the  genus 0 vertex of valence 4.
The self-edge of the contracted  genus 0 vertex yields
a factor of $c_2(\mathcal{T}_{\pi_{\Lambda}})$. The contribution
of the contracted genus 0 vertex is
$$\frac{1}{2}\,\cdot\,\kappa_{[L;1]}$$
where the factor of $\frac{1}{2}$ is included since the self-edge
is not oriented.

\vspace{8pt}
\noindent {\bf Stratum 7.} No contribution
by Property (ii) since there
are two genus 0 vertices of valence 4.

\vspace{8pt}
We have already seen that $\lambda$ is expressible
in term of the Noether-Lefschetz divisors of~$\mathcal{M}_{\Lambda}$.
Since we will later express $Z(L)$ and $\kappa_{[L;1]}$
in terms of the Noether-Lefschetz divisors of $\mathcal{M}_{\Lambda}$,
the principal terms in the above analysis only occur in
Strata 1 and~2. 
The principal parts of Strata 1 and 2 (divided{\footnote{The admissibility
of $L$ together with
condition $\langle L,L\rangle_\Lambda \geq 0$ implies $N_1(L)\neq 0$
by  Proposition \ref{trtrtr}.}}
by $12N_1(L)$) exactly 
constitute the principal part of Theorem \ref{ggg}.

\subsection{Getzler's relation: split contributions} \label{gsp}
The split contributions are obtained
from non-trivial curve class distributions to the vertices.
By Property (i), we need only consider distributions of
admissible classes.

\vspace{8pt}
\noindent {\bf Case A.} The class $L$ is
divided into two nonzero parts
$$L = L_1 + L_2\,.$$ 
Let $\ww\Lambda$ be the saturation of the span of $L_1$, $L_2$, and $\Lambda$.

\begin{enumerate}
\item[$\bullet$] If $\text{rank}(\ww{\Lambda}) = \text{rank}(\Lambda) + 1$, the 
contributions are pushed forward from $\mathcal{X}^4_{\ww{\Lambda}}$ via 
the map $\mathcal{X}^4_{\ww{\Lambda}} \to \mathcal{X}^4_\Lambda$.

\item[$\bullet$] If $\ww{\Lambda} = \Lambda$, the contributions are multiplied by $-\lambda$.
\end{enumerate}

\noindent With the above rules, the formulas below address both the 
$\text{rank}(\ww{\Lambda}) = \text{rank}(\Lambda) + 1$ 
and the $\text{rank}(\ww{\Lambda}) = \text{rank}(\Lambda)$
cases simultaneously.

\vspace{8pt}
\noindent {\bf Stratum 1.}
$$12\left[\ \begin{tikzpicture}[baseline={([yshift=-.5ex]current bounding box.center)}]
	\node[leg] (l3) at (0,3) {};
	\node[leg] (l4) at (1,3) {};
	\node[vertex] (v3) at (.5,2.5) [label=right:$0$] {};
	\node[circ] (v2) at (.5,1.5) [label=right:$1$] {$L_1$};
	\node[circ] (v1) at (.5,.5) [label=right:$0$] {$L_2$};
	\node[leg] (l1) at (0,0) {};
	\node[leg] (l2) at (1,0) {};
	\path
		(l3) edge (v3)
		(l4) edge (v3)
		(v3) edge (v2)
		(v2) edge (v1)
		(v1) edge (l1)
		(v1) edge (l2)
	;
\end{tikzpicture}\!\right]$$
\begin{multline*}
12N_1(L_1)N_0(L_2)\langle L_1, L_2\rangle_{\ww{\Lambda}} \cdot \Big(\mathcal{L}_{2,(1)}\mathcal{L}_{2,(2)}\Delta_{(34)} + \mathcal{L}_{2,(3)}\mathcal{L}_{2,(4)}\Delta_{(12)} \\
+ \mathcal{L}_{2,(1)}\mathcal{L}_{2,(3)}\Delta_{(24)} + \mathcal{L}_{2,(2)}\mathcal{L}_{2,(4)}\Delta_{(13)} + \mathcal{L}_{2,(1)}\mathcal{L}_{2,(4)}\Delta_{(23)} + \mathcal{L}_{2,(2)}\mathcal{L}_{2,(3)}\Delta_{(14)}\Big)
\end{multline*}

\vspace{8pt}
\noindent By Property (ii), the genus 1 vertex must carry a nonzero curve class.
The contribution is calculated using Propositions \ref{zzzr} and \ref{g1g1}.

\vspace{8pt}
\noindent {\bf Stratum 2.}
$$-4\left[\ \begin{tikzpicture}[baseline={([yshift=-.5ex]current bounding box.center)}]
	\node[leg] (l3) at (0,3) {};
	\node[leg] (l4) at (1,3) {};
	\node[vertex] (v3) at (.5,2.5) [label=right:$0$] {};
	\node[leg] (l2) at (0,1.5) {};
	\node[circ] (v2) at (.5,1.5) [label=right:$0$] {$L_2$};
	\node[circ] (v1) at (.5,.5) [label=right:$1$] {$L_1$};
	\node[leg] (l1) at (0,0) {};
	\path
		(l3) edge (v3)
		(l4) edge (v3)
		(v3) edge (v2)
		(l2) edge (v2)
		(v2) edge (v1)
		(v1) edge (l1)
	;
\end{tikzpicture}\!\right]$$
\begin{multline*}
{-4}N_1(L_1)N_0(L_2)\langle L_1, L_2\rangle_{\ww{\Lambda}} \cdot \Big(\mathcal{L}_{2,(1)}\mathcal{L}_{2,(2)}\Delta_{(23)} + \mathcal{L}_{2,(1)}\mathcal{L}_{2,(2)}\Delta_{(24)} + \mathcal{L}_{2,(1)}\mathcal{L}_{2,(3)}\Delta_{(34)} \\
+ \mathcal{L}_{2,(1)}\mathcal{L}_{2,(2)}\Delta_{(13)} + \mathcal{L}_{2,(1)}\mathcal{L}_{2,(2)}\Delta_{(14)} + \mathcal{L}_{2,(2)}\mathcal{L}_{2,(3)}\Delta_{(34)} \\
+ \mathcal{L}_{2,(1)}\mathcal{L}_{2,(3)}\Delta_{(12)} + \mathcal{L}_{2,(1)}\mathcal{L}_{2,(3)}\Delta_{(14)} + \mathcal{L}_{2,(2)}\mathcal{L}_{2,(3)}\Delta_{(24)} \\
+ \mathcal{L}_{2,(1)}\mathcal{L}_{2,(4)}\Delta_{(12)} + \mathcal{L}_{2,(1)}\mathcal{L}_{2,(4)}\Delta_{(13)} + \mathcal{L}_{2,(2)}\mathcal{L}_{2,(4)}\Delta_{(23)}\Big)
\end{multline*}

$$-4\left[\ \begin{tikzpicture}[baseline={([yshift=-.5ex]current bounding box.center)}]
	\node[leg] (l3) at (0,3) {};
	\node[leg] (l4) at (1,3) {};
	\node[circ] (v3) at (.5,2.5) [label=right:$0$] {$L_2$};
	\node[leg] (l2) at (0,1.5) {};
	\node[vertex] (v2) at (.5,1.5) [label=right:$0$] {};
	\node[circ] (v1) at (.5,.5) [label=right:$1$] {$L_1$};
	\node[leg] (l1) at (0,0) {};
	\path
		(l3) edge (v3)
		(l4) edge (v3)
		(v3) edge (v2)
		(l2) edge (v2)
		(v2) edge (v1)
		(v1) edge (l1)
	;
\end{tikzpicture}\!\right]$$
\begin{multline*}
{-12}N_1(L_1)N_0(L_2) \cdot \Big(\mathcal{L}_{1,(1)}\mathcal{L}_{2,(2)}\mathcal{L}_{2,(3)}\mathcal{L}_{2,(4)} + \mathcal{L}_{1,(2)}\mathcal{L}_{2,(1)}\mathcal{L}_{2,(3)}\mathcal{L}_{2,(4)} \\
+ \mathcal{L}_{1,(3)}\mathcal{L}_{2,(1)}\mathcal{L}_{2,(2)}\mathcal{L}_{2,(4)} + \mathcal{L}_{1,(4)}\mathcal{L}_{2,(1)}\mathcal{L}_{2,(2)}\mathcal{L}_{2,(3)}\Big) \\
- 4N_1(L_1)N_0(L_2) \cdot \Big(\mathcal{L}_{1,(1)}\mathcal{L}_{2,(1)}\mathcal{L}_{2,(2)}\mathcal{L}_{2,(3)} + \mathcal{L}_{1,(1)}\mathcal{L}_{2,(1)}\mathcal{L}_{2,(2)}\mathcal{L}_{2,(4)} + \mathcal{L}_{1,(1)}\mathcal{L}_{2,(1)}\mathcal{L}_{2,(3)}\mathcal{L}_{2,(4)} \\
+ \mathcal{L}_{1,(2)}\mathcal{L}_{2,(1)}\mathcal{L}_{2,(2)}\mathcal{L}_{2,(3)} + \mathcal{L}_{1,(2)}\mathcal{L}_{2,(1)}\mathcal{L}_{2,(2)}\mathcal{L}_{2,(4)} + \mathcal{L}_{1,(2)}\mathcal{L}_{2,(2)}\mathcal{L}_{2,(3)}\mathcal{L}_{2,(4)} \\
+ \mathcal{L}_{1,(3)}\mathcal{L}_{2,(1)}\mathcal{L}_{2,(2)}\mathcal{L}_{2,(3)} + \mathcal{L}_{1,(3)}\mathcal{L}_{2,(1)}\mathcal{L}_{2,(3)}\mathcal{L}_{2,(4)} + \mathcal{L}_{1,(3)}\mathcal{L}_{2,(2)}\mathcal{L}_{2,(3)}\mathcal{L}_{2,(4)} \\
\phantom{\Big(} + \mathcal{L}_{1,(4)}\mathcal{L}_{2,(1)}\mathcal{L}_{2,(2)}\mathcal{L}_{2,(3)} + \mathcal{L}_{1,(4)}\mathcal{L}_{2,(1)}\mathcal{L}_{2,(3)}\mathcal{L}_{2,(4)} + \mathcal{L}_{1,(4)}\mathcal{L}_{2,(2)}\mathcal{L}_{2,(3)}\mathcal{L}_{2,(4)}\Big) \\
- 12N_1(L_1)N_0(L_2) \cdot Z(L_1) \Big(\mathcal{L}_{2,(1)}\mathcal{L}_{2,(2)}\mathcal{L}_{2,(3)} + \mathcal{L}_{2,(1)}\mathcal{L}_{2,(2)}\mathcal{L}_{2,(4)} \\
+ \mathcal{L}_{2,(1)}\mathcal{L}_{2,(3)}\mathcal{L}_{2,(4)} + \mathcal{L}_{2,(2)}\mathcal{L}_{2,(3)}\mathcal{L}_{2,(4)}\Big) \,
\end{multline*}

\vspace{8pt}
\noindent By Property (ii), the genus 1 vertex must carry a nonzero curve class.
There are two possibilities for the distribution.
Both contributions are calculated using Propositions \ref{zzzr} and \ref{g1g1}.

\vspace{8pt}
\noindent {\bf Stratum 3.} 
No contribution by Properties (ii) and (iii).

\vspace{8pt}
\noindent {\bf Stratum 4.}
$$6\left[\ \begin{tikzpicture}[baseline={([yshift=-.3ex]current bounding box.center)}]
	\node[leg] (l2) at (0,2.5) {};
	\node[leg] (l3) at (.5,2.5) {};
	\node[leg] (l4) at (1,2.5) {};
	\node[circ] (v3) at (.5,2) [label=right:$0$] {$L_2$};
	\node[leg] (l1) at (0,1) {};
	\node[vertex] (v2) at (.5,1) [label=right:$0$] {};
	\node[circ] (v1) at (.5,0) [label=right:$1$] {$L_1$};
	\path
	    (l2) edge (v3)
		(l3) edge (v3)
		(l4) edge (v3)
		(v3) edge (v2)
		(l1) edge (v2)
		(v2) edge (v1)
	;
\end{tikzpicture}\!\right]$$
\vspace{0pt}
$$24N_1(L_1)N_0(L_2) \cdot \mathcal{L}_{2,(1)}\mathcal{L}_{2,(2)}\mathcal{L}_{2,(3)}\mathcal{L}_{2,(4)}$$

\vspace{8pt}
\noindent By Property (iii), the genus 0 vertex in the middle can not carry a
nonzero curve class. The contribution is calculated using Propositions \ref{zzzr} and \ref{g1p1}.

\vspace{8pt}
\noindent {\bf Stratum 5.}
$$\left[\ \begin{tikzpicture}[baseline={([yshift=-.3ex]current bounding box.center)}]
	\node[leg] (l2) at (0,1.5) {};
	\node[leg] (l3) at (.5,1.5) {};
	\node[leg] (l4) at (1,1.5) {};
	\node[circ] (v2) at (.5,1) [label=right:$0$] {$L_2$};
	\node[leg] (l1) at (0,.5) {};
	\node[circ] (v1) at (.5,0) [label=right:$0$] {$L_1$};
	\path
        (l2) edge (v2)
		(l3) edge (v2)
		(l4) edge (v2)
		(v2) edge (v1)
		(l1) edge (v1)
		(v1) edge[in=-120,out=-60,loopcirc] (v1)
	;
\end{tikzpicture}\!\right]$$
\begin{multline*}
\frac{1}{2} N_0(L_1)N_0(L_2)\langle L_1, L_1\rangle_{\ww{\Lambda}}\langle L_1, L_2\rangle_{\ww{\Lambda}} \cdot \Big(\mathcal{L}_{1,(1)}\mathcal{L}_{2,(2)}\mathcal{L}_{2,(3)}\mathcal{L}_{2,(4)} + \mathcal{L}_{1,(2)}\mathcal{L}_{2,(1)}\mathcal{L}_{2,(3)}\mathcal{L}_{2,(4)} \\
+ \mathcal{L}_{1,(3)}\mathcal{L}_{2,(1)}\mathcal{L}_{2,(2)}\mathcal{L}_{2,(4)} + \mathcal{L}_{1,(4)}\mathcal{L}_{2,(1)}\mathcal{L}_{2,(2)}\mathcal{L}_{2,(3)}\Big)
\end{multline*}

\vspace{8pt}
\noindent The factor $\frac{1}{2} \langle L_1, L_1\rangle_{\ww{\Lambda}}$
is obtained from the self-edge. The contribution is calculated
using Proposition \ref{zzzr}.

\vspace{8pt}
\noindent {\bf Stratum 6.}
$$\left[\ \begin{tikzpicture}[baseline={([yshift=-.3ex]current bounding box.center)}]
	\node[leg] (l1) at (0,1.5) {};
	\node[leg] (l2) at (.33,1.5) {};
	\node[leg] (l3) at (.67,1.5) {};
	\node[leg] (l4) at (1,1.5) {};
	\node[circ] (v2) at (.5,1) [label=right:$0$] {$L_2$};
	\node[circ] (v1) at (.5,0) [label=right:$0$] {$L_1$};
	\path
        (l1) edge (v2)
        (l2) edge (v2)
		(l3) edge (v2)
		(l4) edge (v2)
		(v2) edge (v1)
		(v1) edge[in=-120,out=-60,loopcirc] (v1)
	;
\end{tikzpicture}\!\right]$$
$$\frac{1}{2}N_0(L_1)N_0(L_2)\langle L_1, L_1\rangle_{\ww{\Lambda}}\langle L_1, L_2\rangle_{\ww{\Lambda}} \cdot \mathcal{L}_{2,(1)}\mathcal{L}_{2,(2)}\mathcal{L}_{2,(3)}\mathcal{L}_{2,(4)}$$

\vspace{8pt}
\noindent The factor $\frac{1}{2} \langle L_1, L_1\rangle_{\ww{\Lambda}}$
is obtained from the self-edge. The contribution is calculated
using Proposition \ref{zzzr}.

\vspace{8pt}
\noindent {\bf Stratum 7.}
$$-2\left[\ \begin{tikzpicture}[baseline={([yshift=-.5ex]current bounding box.center)}]
	\node[leg] (l3) at (0,2) {};
	\node[leg] (l4) at (1,2) {};
	\node[circ] (v2) at (.5,1.5) [label=right:$0$] {$L_2$};
	\node[circ] (v1) at (.5,.5) [label=right:$0$] {$L_1$};
	\node[leg] (l1) at (0,0) {};
	\node[leg] (l2) at (1,0) {};
	\path
		(l3) edge (v2)
		(l4) edge (v2)
		(v2) edge[bend left=45] (v1)
		(v2) edge[bend right=45] (v1)
		(v1) edge (l1)
		(v1) edge (l2)
	;
\end{tikzpicture}\!\right]$$
\begin{multline*}
{-}N_0(L_1)N_0(L_2)\langle L_1, L_2\rangle_{\ww{\Lambda}}^2 \cdot \Big(\mathcal{L}_{1,(1)}\mathcal{L}_{1,(2)}\mathcal{L}_{2,(3)}\mathcal{L}_{2,(4)} + \mathcal{L}_{2,(1)}\mathcal{L}_{2,(2)}\mathcal{L}_{1,(3)}\mathcal{L}_{1,(4)} \\
+ \mathcal{L}_{1,(1)}\mathcal{L}_{1,(3)}\mathcal{L}_{2,(2)}\mathcal{L}_{2,(4)} + \mathcal{L}_{2,(1)}\mathcal{L}_{2,(3)}\mathcal{L}_{1,(2)}\mathcal{L}_{1,(4)} \\
+ \mathcal{L}_{1,(1)}\mathcal{L}_{1,(4)}\mathcal{L}_{2,(2)}\mathcal{L}_{2,(3)} + \mathcal{L}_{2,(1)}\mathcal{L}_{2,(4)}\mathcal{L}_{1,(2)}\mathcal{L}_{1,(3)}\Big)
\end{multline*}

\vspace{8pt}
\noindent The factor $-2 \left(\frac{1}{2} \langle L_1, L_2\rangle^2_{\ww{\Lambda}}\right)$
is obtained from two middle edges (the $\frac{1}{2}$ comes
from the symmetry of the graph). The contribution is calculated
using Proposition \ref{zzzr}.

\vspace{8pt}
\noindent {\bf Case B.} The class $L$ is
divided into three nonzero parts
$$L = L_1 + L_2+L_3\,.$$ 
Let $\ww\Lambda$ be the saturation of the span of $L_1$, $L_2$, $L_3$, and $\Lambda$.
By Properties (ii)-(iv),
only Stratum 2 contributes.

\begin{enumerate}
\item[$\bullet$] If $\text{rank}(\ww{\Lambda}) = \text{rank}(\Lambda) + 2$, the
contributions are pushed forward from $\mathcal{X}^4_{\ww{\Lambda}}$ via 
the map $\mathcal{X}^4_{\ww{\Lambda}} \to \mathcal{X}^4_\Lambda$.

\item[$\bullet$] If $\text{rank}(\ww{\Lambda}) = \text{rank}(\Lambda) + 1$, 
the contributions are pushed forward from $\mathcal{X}^4_{\ww{\Lambda}}$ via 
the map $\mathcal{X}^4_{\ww{\Lambda}} \to \mathcal{X}^4_\Lambda$ {\it and}  multiplied by $-\lambda$.

\item[$\bullet$] If $\ww{\Lambda} = \Lambda$, the contributions
 are multiplied by $(-\lambda)^2$.
\end{enumerate}

\noindent With the above rules, the formula below addresses all three cases 
$$\text{rank}(\ww{\Lambda}) = \text{rank}(\Lambda) + 2\, ,\ \  
 \text{rank}(\ww{\Lambda}) = \text{rank}(\Lambda) + 1\, , \ \
\text{rank}(\ww{\Lambda}) = \text{rank}(\Lambda)$$
simultaneously.

\vspace{8pt}
\noindent {\bf Stratum 2.}
$$-4\left[\ \begin{tikzpicture}[baseline={([yshift=-.5ex]current bounding box.center)}]
	\node[leg] (l3) at (0,3) {};
	\node[leg] (l4) at (1,3) {};
	\node[circ] (v3) at (.5,2.5) [label=right:$0$] {$L_3$};
	\node[leg] (l2) at (0,1.5) {};
	\node[circ] (v2) at (.5,1.5) [label=right:$0$] {$L_2$};
	\node[circ] (v1) at (.5,.5) [label=right:$1$] {$L_1$};
	\node[leg] (l1) at (0,0) {};
	\path
		(l3) edge (v3)
		(l4) edge (v3)
		(v3) edge (v2)
		(l2) edge (v2)
		(v2) edge (v1)
		(v1) edge (l1)
	;
\end{tikzpicture}\!\right]$$
\begin{multline*}
{-4}N_1(L_1)N_0(L_2)N_0(L_3)\langle L_1, L_2\rangle_{\ww{\Lambda}}\langle L_2, L_3\rangle_{\ww{\Lambda}} \cdot \Big(\mathcal{L}_{2,(1)}\mathcal{L}_{3,(2)}\mathcal{L}_{3,(3)} + \mathcal{L}_{2,(1)}\mathcal{L}_{3,(2)}\mathcal{L}_{3,(4)} \\
+ \mathcal{L}_{2,(1)}\mathcal{L}_{3,(3)}\mathcal{L}_{3,(4)} + \mathcal{L}_{2,(2)}\mathcal{L}_{3,(1)}\mathcal{L}_{3,(3)} + \mathcal{L}_{2,(2)}\mathcal{L}_{3,(1)}\mathcal{L}_{3,(4)} + \mathcal{L}_{2,(2)}\mathcal{L}_{3,(3)}\mathcal{L}_{3,(4)} \\
+ \mathcal{L}_{2,(3)}\mathcal{L}_{3,(1)}\mathcal{L}_{3,(2)} + \mathcal{L}_{2,(3)}\mathcal{L}_{3,(1)}\mathcal{L}_{3,(4)} + \mathcal{L}_{2,(3)}\mathcal{L}_{3,(2)}\mathcal{L}_{3,(4)} \\
+ \mathcal{L}_{2,(4)}\mathcal{L}_{3,(1)}\mathcal{L}_{3,(2)} + \mathcal{L}_{2,(4)}\mathcal{L}_{3,(1)}\mathcal{L}_{3,(3)} + \mathcal{L}_{2,(4)}\mathcal{L}_{3,(2)}\mathcal{L}_{3,(3)}\Big) 
\end{multline*}

\vspace{8pt}
\noindent The contribution is calculated using Propositions \ref{zzzr} and \ref{g1g1}.

\subsection{Proof of Theorem \ref{ggg}}
The complete exported relation \eqref{exex1} is obtained by adding all the 
unsplit contributions of Section \ref{gusp} to all the 
split contributions of Section \ref{gsp}.
Using the Noether-Lefschetz support{\footnote{To be proven
in Section \ref{dvdv}.}} of
$$\lambda\, , \ \ \kappa_{[L;1]}\, , \ \ Z(L)$$
the only principal contributions are unsplit and
obtained from Strata 1 and 2. For the formula
of Theorem \ref{ggg}, we normalize the relation
by dividing by $12N_1(L)$.
\qed

\subsection{Higher genus relations}
In genus 2, there is a basic relation
among tautological classes in codimension 2 
on $\overline{\mathcal{M}}_{2,3}$, see \cite{BelP}. However, to export in genus 2, we
would first have to prove genus 2 analogues of
the push-forward results in genus 0 and
1 of Section
\ref{expo}. To build a theory which allows
the exportation of all the known tautological
relations{\footnote{For a
survey of Pixton's relations, see \cite{PSLC}.}}
on the moduli space of curves to
the moduli space of $K3$ surfaces is
an interesting direction of research.
Fortunately, to prove the Noether-Lefschetz
generation of Theorem \ref{dxxd}, only the
relations in genus 0 and 1 are required.

\section{Noether-Lefschetz generation} \label{pfpf}

\subsection{Overview} We present here the proof of Theorem \ref{dxxd}: the 
strict tautological ring is generated
by Noether-Lefschetz loci,
$$\NL(\mathcal{M}_{\Lambda}) = \mathsf{R}^\star(\mathcal{M}_{\Lambda})\, .$$
We will use the exported WDVV relation $(\dag)$ of Theorem \ref{wdvv},
the exported Getzler's relation $(\ddag)$ of Theorem \ref{ggg},
the diagonal decomposition $(\ddag')$ of Corollary
\ref{bvdiag},
and an induction on codimension.

For $(\ddag)$, we will require not only the principal terms which
appear in the statement of Theorem \ref{ggg}, but the entire
formula proven in Section \ref{gggg}. In particular, for $(\ddag)$
we will {\it not} divide by the factor $12N_1(L)$.

\subsection{Codimension $1$} \label{dvdv}
The base of the induction on codimension consists of all of the {\it divisorial} $\kappa$ classes: 
\begin{equation}\label{kakaka}
\kappa_{[L^3;0]}\,, \ \kappa_{[L;1]}\,, \ \kappa_{[L_1^2,L_2;0]}\,, \ \kappa_{[L_1,L_2,L_3;0]}\ \in \ \mathsf{R}^1(\mathcal{M}_{\Lambda})\, ,
\end{equation}
for $L, L_1, L_2, L_3 \in \Lambda$ admissible. 
Our first goal is to prove  the divisorial $\kappa$ classes \eqref{kakaka}
are expressible in terms of Noether-Lefschetz divisors in $\mathcal{M}_{\Lambda}$. In addition, we will determine the divisor $Z(L)$ defined in Proposition \ref{g1g1} for all $L \in \Lambda$ admissible and $\langle L, L\rangle_\Lambda \geq 0$.

Let $L, L_1, L_2, L_3 \in \Lambda$ be admissible, and let $H \in \Lambda$ be the quasi-polarization with 
$$\langle H, H\rangle_\Lambda = 2\ell > 0 \,.$$

\vspace{8pt}
\noindent {\bf Case A.} $\kappa_{[L^3;0]}$, $\kappa_{[L;1]}$, and $Z(L)$ for $\langle L, L\rangle_\Lambda > 0$. \nopagebreak

\vspace{8pt}
\noindent \makebox[12pt][l]{$\bullet$}We apply ($\dag$) with respect to $L$ and insert $\Delta_{(12)}\Delta_{(34)}\in \mathsf{R}^4(\mathcal{X}^4_\Lambda)$. The relation
$$\epsilon^4_*\tau^*(\mathsf{WDVV}) \cup \Delta_{(12)}\Delta_{(34)} \,=\, 0 \ \in\  \mathsf{R}^9(\mathcal{X}^4_\Lambda)$$
pushes down via
$$\pi^4_\Lambda: \mathcal{X}^4_\Lambda \rightarrow \mathcal{M}_\Lambda$$
to yield the result
\begin{equation} \label{wd1}
2\langle L, L\rangle_\Lambda \cdot \kappa_{[L;1]} - 2 \cdot \kappa_{[L^3;0]} \ \in \ \mathsf{NL}^1(\mathcal{M}_\Lambda) \,.
\end{equation}

\vspace{8pt}
\noindent \makebox[12pt][l]{$\bullet$}We apply ($\ddag$) with respect to $L$ and insert $\mathcal{L}_{(1)}\mathcal{L}_{(2)}\mathcal{L}_{(3)}\mathcal{L}_{(4)}\in \mathsf{R}^4(\mathcal{X}^4_\Lambda)$.
The relation
$$\epsilon^4_*\tau^*(\mathsf{Getzler}) \cup \mathcal{L}_{(1)}\mathcal{L}_{(2)}\mathcal{L}_{(3)}\mathcal{L}_{(4)} \,=\, 0 \ \in\  \mathsf{R}^9(\mathcal{X}^4_\Lambda)$$
pushes down via $\pi^4_\Lambda$
to yield the result
\begin{multline*}
72 N_1(L)\langle L, L\rangle_\Lambda \cdot \kappa_{[L^3;0]} + 36 N_1(L) \langle L, L\rangle_\Lambda^2 \cdot Z(L) \\
- 48N_1(L)\langle L, L\rangle_\Lambda \cdot \kappa_{[L^3;0]} + \frac{1}{2}N_0(L) \langle L, L\rangle_\Lambda^4 \cdot \kappa_{[L;1]} \ \in \ \mathsf{NL}^1(\mathcal{M}_\Lambda) \,.
\end{multline*}
The divisors $Z(L)$ and $\kappa_{[L;1]}$ are obtained from 
the unsplit contributions of Strata 1, 2, and 6.
After combining terms, we find
\begin{equation}\label{get1}
24 N_1(L) \cdot \kappa_{[L^3;0]} + \frac{1}{2}N_0(L) \langle L, L\rangle_\Lambda^3 \cdot \kappa_{[L;1]} + 36 N_1(L) \langle L, L\rangle_\Lambda \cdot Z(L) \ \in \ \mathsf{NL}^1(\mathcal{M}_\Lambda) \,.
\end{equation}

\vspace{8pt}
\noindent \makebox[12pt][l]{$\bullet$}We apply ($\ddag$) with respect to $L$ and insert
$\mathcal{L}_{(1)}\mathcal{L}_{(2)}\Delta_{(34)}\in \mathsf{R}^4(\mathcal{X}^4_\Lambda)$.
After push-down via $\pi^4_\Lambda$ to $\mathcal{M}_\Lambda$, we obtain
\begin{multline*}
288N_1(L) \cdot \kappa_{[L^3;0]} + 12N_1(L)\langle L, L\rangle_\Lambda \cdot \kappa_{[L;1]} + 48N_1(L)\cdot \kappa_{[L^3;0]} \\
+ 288N_1(L)\langle L, L\rangle_\Lambda \cdot Z(L)  + 24N_1(L)\langle L, L\rangle_\Lambda \cdot Z(L) \\
- 24N_1(L)\langle L, L\rangle_\Lambda \cdot \kappa_{[L;1]} - 24N_1(L) \cdot \kappa_{[L^3;0]} - 24N_1(L) \cdot \kappa_{[L^3;0]} \\
- 24N_1(L)\langle L, L\rangle_\Lambda \cdot Z(L) + \frac{1}{2}N_0(L)\langle L, L\rangle_\Lambda^3 \cdot \kappa_{[L;1]} \ \in \ \mathsf{NL}^1(\mathcal{M}_\Lambda) \,.
\end{multline*}
After combining terms, we find
\begin{multline} \label{get2}
288N_1(L) \cdot \kappa_{[L^3;0]} - \Big( 12N_1(L)\langle L, L\rangle_\Lambda -\frac{1}{2} N_0(L)\langle L, L\rangle_\Lambda^3\Big) \cdot \kappa_{[L;1]} \\
+ 288N_1(L)\langle L, L\rangle_\Lambda \cdot Z(L) \ \in \ \mathsf{NL}^1(\mathcal{M}_\Lambda) \,.
\end{multline}

\vspace{8pt}
\noindent \makebox[12pt][l]{$\bullet$}We apply ($\ddag$) with respect to $L$ and insert $\Delta_{(12)}\Delta_{(34)}
\in \mathsf{R}^4(\mathcal{X}^4_\Lambda)$.
After push-down via $\pi^4_\Lambda$ to $\mathcal{M}_\Lambda$, we obtain
\begin{multline*}
576N_1(L) \cdot \kappa_{[L;1]} + 48N_1(L) \cdot \kappa_{[L;1]} + 6912N_1(L) \cdot Z(L) + 576N_1(L) \cdot Z(L) \\
- 48N_1(L) \cdot \kappa_{[L;1]} - 48N_1(L) \cdot \kappa_{[L;1]} - 1152N_1(L) \cdot Z(L) \\
+ \frac{1}{2} N_0(L)\langle L, L\rangle_\Lambda^2 \cdot \kappa_{[L;1]} \ \in \ \mathsf{NL}^1(\mathcal{M}_\Lambda) \,.
\end{multline*}
After combining terms, we find
\begin{equation} \label{get3}
\Big(528N_1(L) + \frac{1}{2}N_0(L)\langle L, L\rangle_\Lambda^2\Big) \cdot \kappa_{[L;1]} + 6336N_1(L) \cdot Z(L) \ \in \ \mathsf{NL}^1(\mathcal{M}_\Lambda) \,.
\end{equation}

\vspace{8pt}
The system of equations \eqref{wd1}, \eqref{get1}, \eqref{get2}, and \eqref{get3} yields the
matrix
\begin{equation} \label{matx}
\left( \begin{array}{ccc}
-2 & 2\langle L, L\rangle_\Lambda & 0 \\
24N_1(L) & \frac{1}{2}N_0(L)\langle L, L\rangle_\Lambda^3 & 36N_1(L)\langle L, L\rangle_\Lambda \\
288N_1(L) & -12N_1(L)\langle L, L\rangle_\Lambda + \frac{1}{2}N_0(L)\langle L, L\rangle_\Lambda^3 & 288N_1(L)\langle L, L\rangle_\Lambda \\
0 & 528N_1(L) + \frac{1}{2}N_0(L)\langle L, L\rangle_\Lambda^2 & 6336N_1(L) \end{array} \right)\, .
\end{equation}
Since $N_0(L), N_1(L) \neq 0$, straightforward linear algebra{\footnote{One may even consider $\lambda$ as a $4^{\text{th}}$ variable in the equations \eqref{wd1}, \eqref{get1}, \eqref{get2}, and \eqref{get3}. For $\Lambda = (2\ell)$ and $L = H$, the only $\lambda$ terms are obtained from the unsplit contribution of Stratum 4 to ($\ddag$). We find the matrix
\begin{equation*}
\left( \begin{array}{cccc}
-2 & 2(2\ell) & 0 & 0\\
24N_1(\ell) & \frac{1}{2}N_0(\ell)(2\ell)^3 & 36N_1(\ell)(2\ell) & N_0(\ell)(2\ell)^3 \\
288N_1(\ell) & -12N_1(\ell)(2\ell) + \frac{1}{2}N_0(\ell)(2\ell)^3 & 288N_1(\ell)(2\ell) & N_0(\ell)(2\ell)^3 \\
0 & 528N_1(\ell) + \frac{1}{2}N_0(\ell)(2\ell)^2 & 6336N_1(\ell) & N_0(\ell)(2\ell)^2 \end{array} \right)
\end{equation*}
whose determinant is easily seen to be nonzero. In particular, we obtain a geometric proof of the fact
$$\lambda \ \in \ \mathsf{NL}^1(\mathcal{M}_{2\ell}) \,.$$
The determinant of the $4 \times 4$ matrix is likely nonzero for every $\Lambda$ and $H$ (in which case additional $\lambda$ terms appear). We plan to carry out more detailed computation in the future.}}
shows the matrix \eqref{matx} to have maximal 
rank 3.
We have therefore proven 
$$\kappa_{[L^3;0]}, \ \kappa_{[L;1]}, \ Z(L) \ \in \ \mathsf{NL}^1(\mathcal{M}_\Lambda) \,$$
and completed the analysis of Case A.

\vspace{8pt}
\noindent {\bf Case B.} $\kappa_{[H^2,L;0]}$ for $\langle L, L\rangle_\Lambda > 0$. \nopagebreak


\vspace{8pt}
We apply ($\ddag'$) with insertion $\mathcal{L}_{(1)}\mathcal{L}_{(2)}\mathcal{L}_{(3)}\in\mathsf{R}^3(\mathcal{X}^3_\Lambda)$, and push-down via $\pi^3_\Lambda$ 
to $\mathcal{M}_\Lambda$.
Since $$\kappa_{[H;1]}\, ,\ Z(H) \ \in \ \mathsf{NL}^1(\mathcal{M}_\Lambda)$$ by Case A, we find
$$2\ell \cdot \kappa_{[L^3;0]} - 3\langle L, L\rangle_\Lambda \cdot \kappa_{[H^2,L;0]} \ \in \ \mathsf{NL}^1(\mathcal{M}_\Lambda) \,.$$
Since $\kappa_{[L^3;0]} \in \mathsf{NL}^1(\mathcal{M}_\Lambda)$ by Case A, we have
$$\kappa_{[H^2,L;0]} \ \in \ \mathsf{NL}^1(\mathcal{M}_\Lambda) \,.$$
Case B is complete.

\vspace{8pt}
\noindent {\bf Case C.} $\kappa_{[L^3;0]}$,
$\kappa_{[H^2,L;0]}$, and $\kappa_{[L;1]}$ for $\langle L, L\rangle_\Lambda < 0$. \nopagebreak

\vspace{8pt}
\noindent \makebox[12pt][l]{$\bullet$}We apply ($\ddag'$) with insertion $\mathcal{L}_{(1)}\mathcal{L}_{(2)}\mathcal{L}_{(3)}\in\mathsf{R}^3(\mathcal{X}^3_\Lambda)$, and push-down
via $\pi^3_\Lambda$ to $\mathcal{M}_\Lambda$.
Since $$\kappa_{[H;1]}\, ,\ Z(H) \ \in \ \mathsf{NL}^1(\mathcal{M}_\Lambda)$$
by Case A, we find
\begin{equation} \label{get4}
2\ell \cdot \kappa_{[L^3;0]} - 3\langle L, L\rangle_\Lambda \cdot \kappa_{[H^2,L;0]} \ \in \ \mathsf{NL}^1(\mathcal{M}_\Lambda) \,.
\end{equation}

\vspace{8pt}
\noindent \makebox[12pt][l]{$\bullet$}We apply ($\ddag'$) with insertion $\mathcal{H}_{(1)}\mathcal{L}_{(2)}\mathcal{L}_{(3)}\in\mathsf{R}^3(\mathcal{X}^3_\Lambda)$,
and push-down via $\pi^3_\Lambda$ to $\mathcal{M}_\Lambda$.
Since $\kappa_{[H^3;0]} \in \mathsf{NL}^1(\mathcal{M}_\Lambda)$ by Case A, 
we find
\begin{equation} \label{get5}
2\ell \cdot \kappa_{[H,L^2;0]} - 2 \langle H, L\rangle_\Lambda \cdot \kappa_{[H^2,L;0]} \ \in \ \mathsf{NL}^1(\mathcal{M}_\Lambda) \,.
\end{equation}

\vspace{8pt}
\noindent \makebox[12pt][l]{$\bullet$}We apply ($\dag$) with respect to $L$, 
insert $\mathcal{H}_{(1)}\mathcal{H}_{(2)}\mathcal{L}_{(3)}
\mathcal{L}_{(4)}\in\mathsf{R}^4(\mathcal{X}^4_\Lambda)$, and
push-down via~$\pi^4_\Lambda$ to $\mathcal{M}_\Lambda$. We find
\begin{equation} \label{wd2}
\langle H, L\rangle^2 \cdot \kappa_{[L^3;0]} + \langle L, L\rangle^2 \cdot \kappa_{[H^2,L;0]} - 2\langle H, L\rangle \langle L, L\rangle \cdot \kappa_{[H,L^2;0]} \ \in \ \mathsf{NL}^1(\mathcal{M}_\Lambda) \,.
\end{equation}

\vspace{8pt}
\noindent \makebox[12pt][l]{$\bullet$}We apply ($\dag$) with respect to $L$, insert $\Delta_{(12)}\Delta_{(34)}\in\mathsf{R}^4(\mathcal{X}^4_\Lambda)$, and push-down via $\pi^4_\Lambda$ to~$\mathcal{M}_\Lambda$. We find
\begin{equation} \label{wd3}
2\langle L, L\rangle_\Lambda \cdot \kappa_{[L;1]} - 2 \cdot \kappa_{[L^3;0]} \ \in \ \mathsf{NL}^1(\mathcal{M}_\Lambda) \,.
\end{equation}

\vspace{8pt}
The system of equations \eqref{get4}, \eqref{get5}, and \eqref{wd2} 
for 
$$\kappa_{[L^3;0]}\, , \ \ \kappa_{[H,L^2;0]}\, , \ \
\kappa_{[H^2,L;0]}$$
yields
the matrix
$$\left( \begin{array}{ccc}
2\ell & 0 & -3\langle L, L\rangle_\Lambda \\
0 & 2\ell & -2\langle H, L\rangle_\Lambda \\
\langle H, L\rangle_\Lambda^2 & -2\langle H, L\rangle_\Lambda \langle L, L\rangle_\Lambda & \langle L, L\rangle_\Lambda^2
\end{array} \right)$$
with determinant 
$$2\ell \langle L, L\rangle_\Lambda \Big(2\ell \langle L, L\rangle_\Lambda - \langle H, L\rangle_\Lambda^2\Big) > 0 \, $$
by the Hodge index theorem applied to the second factor.
Therefore,
$$\kappa_{[L^3;0]} \,, \ \kappa_{[H,L^2;0]} \,, \ \kappa_{[H^2,L;0]} \ \in \ \mathsf{NL}^1(\mathcal{M}_\Lambda)\, ,$$
and by \eqref{wd3}, we have
$\kappa_{[L;1]} \in \mathsf{NL}^1(\mathcal{M}_\Lambda)$.
Case C is complete.

\vspace{8pt}
\noindent {\bf Case D.} $\kappa_{[L^3;0]}$, $\kappa_{[H^2,L;0]}$, $\kappa_{[L;1]}$, and $Z(L)$ for $\langle L, L\rangle_\Lambda = 0$. \nopagebreak

\vspace{8pt}
\noindent \makebox[12pt][l]{$\bullet$}We apply ($\ddag'$) with insertion $\mathcal{L}_{(1)}\mathcal{L}_{(2)}\mathcal{L}_{(3)}
\in\mathsf{R}^3(\mathcal{X}^3_\Lambda)$,
and push-down via $\pi^3_\Lambda$ to $\mathcal{M}_\Lambda$.
Since 
$$\kappa_{[H;1]}\, ,\ Z(H) \ \in \ \mathsf{NL}^1(\mathcal{M}_\Lambda)$$
by Case A, we find
$$2\ell \cdot \kappa_{[L^3;0]} - 3\langle L, L\rangle_\Lambda \cdot \kappa_{[H^2,L;0]} \ \in \ \mathsf{NL}^1(\mathcal{M}_\Lambda) \,,$$
hence{\footnote{A direct argument using
elliptically fibered $K3$ surfaces  shows $\kappa_{[L^3;0]} = 0$ for $\langle L, L\rangle_\Lambda = 0$.}}
$\kappa_{[L^3;0]} \in \mathsf{NL}^1(\mathcal{M}_\Lambda)$.

\vspace{8pt}
\noindent \makebox[12pt][l]{$\bullet$}We apply ($\ddag'$) with insertion $\mathcal{H}_{(1)}\mathcal{L}_{(2)}\mathcal{L}_{(3)}\in\mathsf{R}^3(\mathcal{X}^3_\Lambda)$,
and push-down via $\pi^3_\Lambda$ to $\mathcal{M}_\Lambda$.
We find
\begin{equation} \label{get6}
2\ell \cdot \kappa_{[H,L^2;0]} - 2 \langle H, L\rangle_\Lambda \cdot \kappa_{[H^2,L;0]} \ \in \ \mathsf{NL}^1(\mathcal{M}_\Lambda) \,.
\end{equation}

\vspace{8pt}
\noindent \makebox[12pt][l]{$\bullet$}We apply ($\dag$) with respect to $L$, 
insert $\mathcal{H}_{(1)}\mathcal{H}_{(2)}\Delta_{(34)}
\in\mathsf{R}^4(\mathcal{X}^4_\Lambda)$,
and push-down via~$\pi^4_\Lambda$ to $\mathcal{M}_\Lambda$.
We find
$$\langle H, L\rangle_\Lambda^2 \cdot \kappa_{[L;1]} - 2\langle H, L\rangle_\Lambda \cdot \kappa_{[H,L^2;0]} \ \in \ \mathsf{NL}^1(\mathcal{M}_\Lambda) \,.$$
Since $\langle H, L\rangle_\Lambda \neq 0$ by the Hodge index theorem, we have
\begin{equation} \label{wd4}
\langle H, L\rangle_\Lambda \cdot \kappa_{[L;1]} - 2 \cdot \kappa_{[H,L^2;0]} \ \in \ \mathsf{NL}^1(\mathcal{M}_\Lambda) \,.
\end{equation}

\vspace{8pt}
\noindent \makebox[12pt][l]{$\bullet$}We apply ($\ddag$) with respect to $L$,
insert $\mathcal{H}_{(1)}\mathcal{H}_{(2)}\mathcal{H}_{(3)}\mathcal{L}_{(4)}
\in\mathsf{R}^4(\mathcal{X}^4_\Lambda)$,
and push-down via~$\pi^4_\Lambda$ to $\mathcal{M}_\Lambda$.
We find
\begin{multline*}
36N_1(L) \langle H, L\rangle_\Lambda \cdot \kappa_{[H^2,L;0]} + 36N_1(L)(2\ell) \cdot \kappa_{[H,L^2;0]} + 36N_1(L)(2\ell) \langle H, L\rangle_\Lambda \cdot Z(L) \\
- 36N_1(L) \langle H, L\rangle_\Lambda \cdot \kappa_{[H^2,L;0]} \ \in \ \mathsf{NL}^1(\mathcal{M}_\Lambda) \,.
\end{multline*}
Since $N_1(L) \neq 0$, we have
\begin{equation} \label{get7}
\kappa_{[H,L^2;0]} + \langle H, L\rangle_\Lambda \cdot Z(L) \ \in \ \mathsf{NL}^1(\mathcal{M}_\Lambda) \,.
\end{equation}

\vspace{8pt}
\noindent \makebox[12pt][l]{$\bullet$}We apply ($\ddag$) with respect to $L$,
insert $\mathcal{H}_{(1)}\mathcal{H}_{(2)}\Delta_{(34)}
\in\mathsf{R}^4(\mathcal{X}^4_\Lambda)$,
and push-down via~$\pi^4_\Lambda$ to $\mathcal{M}_\Lambda$. We find
\begin{multline*}
288N_1(L) \cdot \kappa_{[H^2,L;0]} + 12N_1(L)(2\ell) \cdot \kappa_{[L;1]} + 48N_1(L) \cdot \kappa_{[H^2,L;0]} \\
+ 288N_1(L)(2\ell) \cdot Z(L) + 24N_1(L)(2\ell) \cdot Z(L) 
\\- 24N_1(L) \cdot \kappa_{[H^2,L;0]} 
- 24N_1(L) \cdot \kappa_{[H^2,L;0]} - 24N_1(L)(2\ell) \cdot Z(L) \ \in \ \mathsf{NL}^1(\mathcal{M}_\Lambda) \,.
\end{multline*}
After combining terms, we obtain
\begin{equation} \label{get8}
24 \cdot \kappa_{[H^2,L;0]} + 2\ell \cdot \kappa_{[L;1]} + 24(2\ell) \cdot Z(L) \ \in \ \mathsf{NL}^1(\mathcal{M}_\Lambda) \,.
\end{equation}

\vspace{8pt}
We multiply \eqref{get8} by $\langle H, L\rangle_\Lambda$, and make substitutions using \eqref{get6}, \eqref{wd4}, and \eqref{get7}, which yields
$$(12 + 2 - 24)(2\ell) \cdot \kappa_{[H,L^2;0]} \ \in \ \mathsf{NL}^1(\mathcal{M}_\Lambda) \,.$$
Therefore,
$\kappa_{[H,L^2;0]} \in \mathsf{NL}^1(\mathcal{M}_\Lambda)$.
Then, again by \eqref{get6}, \eqref{wd4}, and \eqref{get7},
$$\kappa_{[H^2,L;0]}\,, \ \kappa_{[L;1]}\,, \ Z(L) \ \in \ \mathsf{NL}^1(\mathcal{M}_\Lambda) \,.$$
Case D is complete.

\vspace{8pt}
\noindent {\bf Case E.} $\kappa_{[L_1,L_2,L_3;0]}$ for arbitrary $L_1, L_2, L_3 \in \Lambda$. \nopagebreak

\vspace{8pt}
We apply ($\ddag'$) with insertion $\mathcal{L}_{1,(1)}\mathcal{L}_{2,(2)}\mathcal{L}_{3,(3)}
\in\mathsf{R}^3(\mathcal{X}^3_\Lambda)$,
and push-down via $\pi^3_\Lambda$ to~$\mathcal{M}_\Lambda$.
The result
expresses $2\ell \cdot \kappa_{[L_1,L_2,L_3;0]}$ in terms of Noether-Lefschetz divisors and $\kappa$ divisors treated in the previous cases. Therefore,
$$\kappa_{[L_1,L_2,L_3;0]} \ \in \ \mathsf{NL}^1(\mathcal{M}_\Lambda) \,.$$
Case E is complete.

\vspace{8pt}
Cases A-E together cover all divisorial $\kappa$ classes and
prove the divisorial case of Theorem \ref{dxxd}.

\begin{proposition} \label{pdxxd} The strict tautological ring 
in codimension $1$ is generated
by Noether-Lefschetz loci,
$$\mathsf{NL}^1(\mathcal{M}_{\Lambda}) = \mathsf{R}^1(\mathcal{M}_{\Lambda})\, .$$
\end{proposition}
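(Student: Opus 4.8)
The plan is to establish the two inclusions separately. The inclusion $\mathsf{NL}^1(\mathcal{M}_{\Lambda}) \subseteq \mathsf{R}^1(\mathcal{M}_{\Lambda})$ is immediate from the definition of the strict tautological ring, so all of the content lies in the reverse inclusion $\mathsf{R}^1(\mathcal{M}_{\Lambda}) \subseteq \mathsf{NL}^1(\mathcal{M}_{\Lambda})$. First I would enumerate the additive generators of $\mathsf{R}^1(\mathcal{M}_{\Lambda})$. By definition these are push-forwards from a Noether-Lefschetz locus $\mathcal{M}_{\ww{\Lambda}}$ of codimension $c$ of a $\kappa$-monomial of codimension $d$, and such a class lands in codimension $1$ only when $c+d=1$. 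Two cases arise. If $c=1$ and $d=0$, the $\kappa$-monomial is a rational multiple of the fundamental class, so its push-forward is a multiple of the class of the Noether-Lefschetz divisor $\mathcal{M}_{\ww{\Lambda}}$ and therefore lies in $\mathsf{NL}^1(\mathcal{M}_{\Lambda})$ by definition. If $c=0$, so $\ww{\Lambda}=\Lambda$, the generator is a codimension-$1$ $\kappa$ class on $\mathcal{M}_{\Lambda}$ itself; these are exactly the divisorial $\kappa$ classes
$$\kappa_{[L^3;0]}\,,\ \kappa_{[L;1]}\,,\ \kappa_{[L_1^2,L_2;0]}\,,\ \kappa_{[L_1,L_2,L_3;0]}$$
of \eqref{kakaka}. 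The problem therefore reduces to proving that each divisorial $\kappa$ class lies in $\mathsf{NL}^1(\mathcal{M}_{\Lambda})$.

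To prove this reduction I would stratify according to the sign of $\langle L,L\rangle_\Lambda$ and the index structure, and in each stratum produce linear relations modulo $\mathsf{NL}^1(\mathcal{M}_{\Lambda})$. The mechanism is to insert a suitable codimension-$3$ or codimension-$4$ monomial in the classes $\mathcal{L}_{(i)}$, $\mathcal{H}_{(i)}$, and the diagonals $\Delta_{(ij)}$ into the exported WDVV relation $(\dag)$, the exported Getzler relation $(\ddag)$, or the diagonal decomposition $(\ddag')$, and then push forward along $\pi^n_\Lambda$ to $\mathcal{M}_{\Lambda}$. Each insertion yields one linear equation among the divisorial $\kappa$ classes and the divisor $Z(L)$, with all remaining terms supported on Noether-Lefschetz divisors. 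Assembling enough independent equations to solve for every divisorial $\kappa$ class is precisely the content of Cases A--E above.

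The main obstacle is guaranteeing that these linear systems have full rank. In the generic case $\langle L,L\rangle_\Lambda>0$ one collects four equations into the matrix \eqref{matx} and must verify that it attains its maximal rank $3$; the non-vanishing of $N_0(L)$ and $N_1(L)$, supplied by Propositions \ref{trtr} and \ref{trtrtr}, is exactly what keeps the system non-degenerate. The delicate stratum is $\langle L,L\rangle_\Lambda=0$, where several coefficients collapse and the matrix \eqref{matx} is no longer usable; here I would instead play $H$ off against $L$, exploiting $\langle H,L\rangle_\Lambda\neq 0$ from the Hodge index theorem to recover the missing relations, as in Case D. Once every divisorial $\kappa$ class --- over all admissible $L,L_1,L_2,L_3$ and all three sign cases of $\langle L,L\rangle_\Lambda$ --- is shown to lie in $\mathsf{NL}^1(\mathcal{M}_{\Lambda})$, the enumeration of the first paragraph yields $\mathsf{R}^1(\mathcal{M}_{\Lambda})\subseteq\mathsf{NL}^1(\mathcal{M}_{\Lambda})$ and hence the claimed equality.
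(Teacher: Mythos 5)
Your proposal is correct and follows essentially the same route as the paper: the reduction of $\mathsf{R}^1(\mathcal{M}_\Lambda)$ to the divisorial $\kappa$ classes \eqref{kakaka}, followed by inserting monomials into the exported relations $(\dag)$, $(\ddag)$, $(\ddag')$ and solving the resulting linear systems modulo $\mathsf{NL}^1(\mathcal{M}_\Lambda)$ --- the matrix \eqref{matx} of rank $3$ when $\langle L,L\rangle_\Lambda>0$, and the Hodge-index argument playing $H$ against $L$ when $\langle L,L\rangle_\Lambda\leq 0$ --- exactly as in the paper's Cases A--E. The only point you spell out more explicitly than the paper is the codimension bookkeeping ($c+d=1$) showing that the additive generators of $\mathsf{R}^1(\mathcal{M}_\Lambda)$ are precisely the Noether-Lefschetz divisor classes and the divisorial $\kappa$ classes, which the paper treats as immediate from the definitions.
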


In fact, by the result of \cite{Ber}, $\mathsf{NL}^1(\mathcal{M}_\Lambda)$
generates {\it all} of $\mathsf{A}^1(\mathcal{M}_\Lambda)$
for $\text{rank}(\Lambda)\leq 17$. 
We have given a direct proof
of Proposition \ref{pdxxd}
using exported relations which is valid for every lattice
polarization $\Lambda$ without rank restriction. 
The same method will be used to prove the full statement of
Theorem \ref{dxxd}.

\subsection{Second Chern class} \label{scc}
The next step is to eliminate the $c_2(\mathcal{T}_{\pi_\Lambda})$ index 
in the class $\kappa_{[L_1^{a_1},\ldots,L_k^{a_k};b]}$ and reduce to the case
$$\kappa_{[L_1^{a_1},\ldots,L_k^{a_k};0]} \,.$$
Our strategy is to express $c_2(\mathcal{T}_{\pi_\Lambda}) \in \mathsf{R}^2(\mathcal{X}_\Lambda)$ in terms of simpler strict tautological classes.

From now on, we will require only the decomposition $(\ddag')$. 

\vspace{8pt}
\noindent \makebox[12pt][l]{$\bullet$}We apply ($\ddag'$) with insertion $\mathcal{H}_{(1)}\mathcal{H}_{(2)}\Delta_{(23)}
\in\mathsf{R}^4(\mathcal{X}^3_\Lambda)$, and push-down via $\pi^3_\Lambda$ to $\mathcal{M}_\Lambda$.
As a result, we find
$$2\ell \cdot \kappa_{[H^2;1]} - \kappa_{[H^3;0]}\kappa_{[H;1]} - 2 \cdot \kappa_{[H^4;0]} + 2 \cdot \kappa_{[H^4;0]} \ \in \ \mathsf{NL}^2(\mathcal{M}_\Lambda) \,,$$
where we have used Proposition \ref{pdxxd} for all the
non-principal terms corresponding to larger lattices.
By Proposition \ref{pdxxd} for $\Lambda$, we have
$\kappa_{[H^3;0]}, \,\kappa_{[H;1]} \in 
\mathsf{NL}^1(\mathcal{M}_{\Lambda})$. 
We conclude
$$\kappa_{[H^2;1]} \ \in \ \mathsf{NL}^2(\mathcal{M}_\Lambda) \,.$$

\vspace{8pt}
\noindent \makebox[12pt][l]{$\bullet$}We apply ($\ddag'$) with insertion $\Delta_{(12)}\in\mathsf{R}^2(\mathcal{X}^3_\Lambda)$, and push-forward to $\mathcal{X}_\Lambda$ via the third projection
$$\text{pr}_{(3)}: \mathcal{X}^3_\Lambda \rightarrow \mathcal{X}_\Lambda\, .$$
We find
\begin{align*}
2\ell \cdot c_2(\mathcal{T}_{\pi_\Lambda}) \, & = \, 2 \cdot \mathcal{H}^2 + 24 \cdot \mathcal{H}^2 - \kappa_{[H^2;1]} - 2 \cdot \mathcal{H}^2 + \ldots \\
& = \, 24 \cdot \mathcal{H}^2 - \kappa_{[H^2;1]} + \ldots \ \in \ \mathsf{R}^2(\mathcal{X}_\Lambda) \,,
\end{align*}
where the dots stand for strict tautological classes supported over proper Noether-Lefschetz loci of $\mathcal{M}_\Lambda$.

\vspace{8pt}
We have already proven $\kappa_{[H^2;1]} \in \mathsf{NL}^2(\mathcal{M}_\Lambda)$. Therefore, up to strict tautological classes supported over proper Noether-Lefschetz loci of $\mathcal{M}_\Lambda$, we may replace $c_2(\mathcal{T}_{\pi_\Lambda})$ by
$$\frac{24}{2\ell} \,\cdot\, \mathcal{H}^2 \ \in \ \mathsf{R}^2(\mathcal{X}_\Lambda) \,.$$
The replacement lowers the $c_2(\mathcal{T}_{\pi_\Lambda})$ index of  $\kappa$ classes.
By induction,
we need only prove Theorem \ref{dxxd} for $\kappa$ classes with trivial $c_2(\mathcal{T}_{\pi_\Lambda})$ index.

\subsection{Proof of Theorem \ref{dxxd}}
The $\kappa$ classes with trivial $c_2(\mathcal{T}_{\pi_\Lambda})$ index can be written as
$$\kappa_{[H^a,L_1,\ldots,L_k;0]} \ \in \ \mathsf{R}^{a + k - 2}(\mathcal{M}_\Lambda) \,,$$
where the $L_i \in \Lambda$ are admissible classes 
(not necessarily distinct) that are different from the
quasi-polarization $H$. 

\vspace{8pt}
\noindent {\bf Codimension $2$.} \nopagebreak

\vspace{8pt}
In codimension $2$, the complete list of
$\kappa$ classes (with trivial $c_2(\mathcal{T}_{\pi_\Lambda})$ index)
is:
$$\kappa_{[H^4;0]}\,, \ \kappa_{[H^3,L;0]}\,, \ \kappa_{[H^2,L_1,L_2;0]}\,, \ \kappa_{[H,L_1,L_2,L_3;0]}\,, \ \kappa_{[L_1,L_2,L_3,L_4;0]} \ \in \ \mathsf{R}^2(\mathcal{M}_\Lambda) \,.$$

\vspace{8pt}
\noindent \makebox[12pt][l]{$\bullet$}For $\kappa_{[H^4;0]}$, we apply ($\ddag'$) with insertion $\mathcal{H}_{(1)}^2\Delta_{(23)}\in\mathsf{R}^4(\mathcal{X}^3_\Lambda)$, and push-down via $\pi^3_\Lambda$ to $\mathcal{M}_\Lambda$. 
We find
$$2\ell \cdot \kappa_{[H^2;1]} - 24 \cdot \kappa_{[H^4;0]} - 2 \cdot \kappa_{[H^4;0]} + 2 \cdot \kappa_{[H^4;0]} + 2\ell \cdot \kappa_{[H^2;1]} \ \in \ \mathsf{NL}^2(\mathcal{M}_\Lambda) \,,$$
where we have used Proposition \ref{pdxxd} for all the
non-principal terms corresponding to larger lattices.
Since $\kappa_{[H^2;1]} \in \mathsf{NL}^2(\mathcal{M}_\Lambda)$
by Section \ref{scc}, we have
$\kappa_{[H^4;0]} \in \mathsf{NL}^2(\mathcal{M}_\Lambda)$.

\vspace{8pt}
\noindent \makebox[12pt][l]{$\bullet$}For $\kappa_{[H^3,L;0]}$, we apply ($\ddag'$) with insertion $\mathcal{H}_{(1)}^2\mathcal{H}_{(2)}\mathcal{L}_{(3)}
\in\mathsf{R}^4(\mathcal{X}^3_\Lambda)$, and push-down via~$\pi^3_\Lambda$ to $\mathcal{M}_\Lambda$. We find
$$2\ell \cdot \kappa_{[H^3,L;0]} - \langle H, L\rangle_\Lambda \cdot \kappa_{[H^4;0]} - 2 \cdot \kappa_{[H^3;0]} \kappa_{[H^2,L;0]} + 2\ell \cdot \kappa_{[H^3,L;0]} \ \in \ \mathsf{NL}^2(\mathcal{M}_\Lambda) \,,$$
hence
$\kappa_{[H^3,L;0]} \in \mathsf{NL}^2(\mathcal{M}_\Lambda)$.

\vspace{8pt}
\noindent \makebox[12pt][l]{$\bullet$}For $\kappa_{[H^2,L_1,L_2;0]}$, we apply ($\ddag'$) with insertion $\mathcal{H}_{(1)}^2\mathcal{L}_{1,(2)}\mathcal{L}_{2,(3)} \in\mathsf{R}^4(\mathcal{X}^3_\Lambda)$, and push-down via $\pi^3_\Lambda$ to $\mathcal{M}_\Lambda$. We find
\begin{multline*}
2\ell \cdot \kappa_{[H^2,L_1,L_2;0]} - \langle L_1, L_2\rangle_\Lambda \cdot \kappa_{[H^4;0]} \\
- 2 \cdot \kappa_{[H^2,L_1;0]}\kappa_{[H^2,L_2;0]} + 2\ell \cdot \kappa_{[H^2,L_1,L_2;0]} \ \in \ \mathsf{NL}^2(\mathcal{M}_\Lambda) \,,
\end{multline*}
hence
$\kappa_{[H^2,L_1,L_2;0]} \in \mathsf{NL}^2(\mathcal{M}_\Lambda)$.

\vspace{8pt}
\noindent \makebox[12pt][l]{$\bullet$}For $\kappa_{[H,L_1,L_2,L_3;0]}$, we apply ($\ddag'$) with insertion $\mathcal{H}_{(1)}\mathcal{L}_{1,(1)}\mathcal{L}_{2,(2)}\mathcal{L}_{3,(3)}
\in \mathsf{R}^4(\mathcal{X}^3_\Lambda)$, and push-down via $\pi^3_\Lambda$ to $\mathcal{M}_\Lambda$. We find
\begin{multline*}
2\ell \cdot \kappa_{[H,L_1,L_2,L_3;0]} - \langle L_2, L_3\rangle_\Lambda \cdot \kappa_{[H^3,L_1;0]} - \kappa_{[H^2,L_2;0]}\kappa_{[H,L_1,L_3;0]} \\
- \kappa_{[H^2,L_3;0]}\kappa_{[H,L_1,L_2;0]} + \langle H, L_1\rangle_\Lambda \cdot \kappa_{[H^2,L_2,L_3;0]} \ \in \ \mathsf{NL}^2(\mathcal{M}_\Lambda) \,,
\end{multline*}
hence
$\kappa_{[H,L_1,L_2,L_3;0]} \in \mathsf{NL}^2(\mathcal{M}_\Lambda)$.

\vspace{8pt}
\noindent \makebox[12pt][l]{$\bullet$}For $\kappa_{[L_1,L_2,L_3,L_4;0]}$, we apply ($\ddag'$) with insertion $\mathcal{L}_{1,(1)}\mathcal{L}_{2,(1)}\mathcal{L}_{3,(2)}\mathcal{L}_{4,(3)}
\in \mathsf{R}^4(\mathcal{X}^3_\Lambda)$, and push-down via $\pi^3_\Lambda$ to $\mathcal{M}_\Lambda$. We find
\begin{multline*}
2\ell \cdot \kappa_{[L_1,L_2,L_3,L_4;0]} - \langle L_3, L_4\rangle_\Lambda \cdot \kappa_{[H^2,L_1,L_2;0]} - \kappa_{[H^2,L_3;0]}\kappa_{[L_1,L_2,L_4;0]} \\
- \kappa_{[H^2,L_4;0]}\kappa_{[L_1,L_2,L_3;0]} + \langle L_1, L_2\rangle_\Lambda \cdot \kappa_{[H^2,L_3,L_4;0]} \ \in \ \mathsf{NL}^2(\mathcal{M}_\Lambda) \,,
\end{multline*}
hence
$\kappa_{[L_1,L_2,L_3,L_4;0]} \in \mathsf{NL}^2(\mathcal{M}_\Lambda)$.

\vspace{8pt}
\noindent {\bf Codimension $\geq 3$.} \nopagebreak

\vspace{8pt}
Our strategy in codimension $c\geq 3$ involves an induction on codimension together with a second induction on the $H$ index $a$ of the kappa class 
$$\kappa_{[H^a,L_1,\ldots,L_k;0]} \ \in \ \mathsf{R}^{a + k - 2}(\mathcal{M}_\Lambda) \,.$$
For the induction on $c$, we assume the Noether-Lefschetz generation
for all {\it lower} codimension. The base case is Proposition \ref{pdxxd}.
For the induction on $a$, we assume the Noether-Lefschetz generation
for all {\it higher} $H$ index.

\vspace{8pt}
\noindent \makebox[12pt][l]{$\bullet$}For the base of the induction on $H$ index, consider
the class 
$$\kappa_{[H^a;0]} \ \in \ \mathsf{R}^{a - 2}(\mathcal{M}_\Lambda)\, .$$ 
We apply ($\ddag'$), insert
$$\mathcal{H}_{(1)}^{a - 3}\mathcal{H}_{(2)}^{2}\mathcal{H}_{(3)} \ \in \ \mathsf{R}^a(\mathcal{X}^3_\Lambda)\, \ \ \text{with}  \ a-2=c\, ,$$
and push-down via $\pi^3_\Lambda$ to $\mathcal{M}_\Lambda$.
By the induction on codimension, we obtain
\begin{multline} \label{fnfn}
2\ell \cdot \kappa_{[H^a;0]} - 2 \cdot \kappa_{[H^3;0]}\kappa_{[H^{a-1};0]} - \kappa_{[H^4;0]}\kappa_{[H^{a-2};0]} \\
+ 2\ell \cdot \kappa_{[H^a;0]} + \kappa_{[H^5;0]}\kappa_{[H^{a-3};0]} \ \in \ \mathsf{NL}^{a - 2}(\mathcal{M}_\Lambda) \,.
\end{multline}
For both{\footnote{Since $a-2=c\geq 3$, $a\geq 5$.}} $a = 5$ and $a > 5$, the coefficient of $\kappa_{[H^a;0]}$ is positive and
the other terms in \eqref{fnfn} are products of $\kappa$ classes of lower codimension. Therefore, by the induction hypothesis,
$$\kappa_{[H^a;0]} \ \in \ \mathsf{NL}^{a - 2}(\mathcal{M}_\Lambda) \,.$$

\vspace{8pt}
\noindent \makebox[12pt][l]{$\bullet$}If $a>0$ and $k > 0$, we apply ($\ddag'$), insert
$$\mathcal{H}_{(1)}^{a - 1}\mathcal{L}_{1,(1)} \cdots \mathcal{L}_{k - 1,(1)}\mathcal{H}_{(2)}\mathcal{L}_{k,(3)} \ \in \ \mathsf{R}^{a+k}(\mathcal{X}^3_\Lambda)\, \ \ \text{with}  \ a+k-2=c\, ,$$
and push-down via $\pi^3_\Lambda$ to $\mathcal{M}_\Lambda$.
By the induction on codimension, we obtain
\begin{multline}\label{msms7}
2\ell \cdot \kappa_{[H^a,L_1,\ldots,L_k;0]} - \langle H, L_k\rangle_\Lambda \cdot \kappa_{[H^{a + 1},L_1,\ldots,L_{k-1};0]} \\
- \kappa_{[H^3;0]}\kappa_{[H^{a-1},L_1,\ldots,L_{k-1},L_k;0]} - \kappa_{[H^2,L_k;0]}\kappa_{[H^a,L_1,\ldots,L_{k-1};0]} \\
+ \kappa_{[H^3,L_k;0]}\kappa_{[H^{a-1},L_1,\ldots,L_{k-1};0]} \ \in \ \mathsf{NL}^{a + k - 2}(\mathcal{M}_\Lambda) \,.
\end{multline}
Since the last three terms of \eqref{msms7} are products of $\kappa$ classes of lower codimension (since $a+k\geq 5$), using the induction hypothesis again yields
$$2\ell \cdot \kappa_{[H^a,L_1,\ldots,L_k;0]} - \langle H, L_k\rangle_\Lambda \cdot \kappa_{[H^{a + 1},L_1,\ldots,L_{k-1};0]} \ \in \ \mathsf{NL}^{a + k - 2}(\mathcal{M}_\Lambda) \,,$$
which allows us to raise the $H$ index.

\vspace{8pt}
\noindent \makebox[12pt][l]{$\bullet$}If $a = 0$, we apply ($\ddag'$), insert
$$\mathcal{L}_{1,(1)} \cdots \mathcal{L}_{k - 2,(1)}\mathcal{L}_{k - 1, (2)}\mathcal{L}_{k,(3)} \ \in \ \mathsf{R}^k(\mathcal{X}^3_\Lambda)\, \ \ \text{with}  \ k-2=c\, ,$$
and push-down via $\pi^3_\Lambda$ to $\mathcal{M}_\Lambda$.
By the induction on codimension, we obtain
\begin{multline} \label{msms}
2\ell \cdot \kappa_{[L_1,\ldots,L_k;0]} - \langle L_{k - 1}, L_k\rangle_\Lambda \cdot \kappa_{[H^2,L_1,\ldots,L_{k - 2};0]} \\
- \kappa_{[H^2,L_{k - 1};0]}\kappa_{[L_1,\ldots,L_{k - 2},L_k;0]} - \kappa_{[H^2,L_k;0]}\kappa_{[L_1,\ldots,L_{k - 2},L_{k - 1};0]} \\
+ \kappa_{[H^2,L_{k-1},L_k;0]}\kappa_{[L_1,\ldots,L_{k-2};0]} \ \in \ \mathsf{NL}^{k - 2}(\mathcal{M}_\Lambda) \,.
\end{multline}
Since the last three terms of \eqref{msms} are products of $\kappa$ classes of lower codimension (since $k\geq 5$), using the induction hypothesis again yields
$$2\ell \cdot \kappa_{[L_1,\ldots,L_k;0]} - \langle L_{k - 1}, L_k\rangle_\Lambda \cdot \kappa_{[H^2,L_1,\ldots,L_{k - 2};0]} \ \in \ \mathsf{NL}^{k - 2}(\mathcal{M}_\Lambda) \,,$$
which allows us to raise the $H$ index.

\vspace{8pt}
The induction argument on codimension and $H$ index is complete. The Noether-Lefschetz generation of Theorem \ref{dxxd} is proven. \qed

\end{document}